\providecommand{\U}[1]{\protect\rule{.1in}{.1in}}
\newtheorem{theorem}{Theorem}
\newtheorem{case}{Case}
\newtheorem{corollary}{Corollary}
\newtheorem{definition}{Definition}
\newtheorem{example}{Example}
\newtheorem{lemma}{Lemma}
\newtheorem{remark}{Remark}
\newproof{proof}{Proof}
\numberwithin{equation}{section}
\begin{document}
\begin{frontmatter}
\title{Floquet theory based on new periodicity concept for hybrid systems involving $q$-difference equations\tnoteref{t1}}
\tnotetext[t1]{This study is supported by The Scientific and Technological Research Council
of Turkey (grant no. 1649B031101152).}
\author{Murat Ad\i var\corref{cor1}}
\ead{murat.adivar@ieu.edu.tr}
\address{ Izmir University of Economics\\
Department of Mathematics, 35330, Izmir Turkey}
%\urladdr{}
\cortext[cor1]{Corresponding author}
\author{Halis Can Koyuncuo\u{g}lu}
\ead{can.koyuncuoglu@ieu.edu.tr}
\address{ Izmir University of Economics\\
Department of Mathematics, 35330, Izmir Turkey}
%\urladdr{}
%\thanks{This study is supported by The Scientific and Technological Research Council
%of Turkey}
%\date{}

%\dedicatory{ }
\begin{abstract}
Using the new periodicity concept based on shifts, we construct a unified Floquet theory for homogeneous and
nonhomogeneous hybrid periodic systems on domains having continuous, discrete or hybrid structure. New periodicity concept based on shifts enables the construction of Floquet theory on hybrid domains that are not
necessarily additive periodic. This makes periodicity and stability analysis of hybrid periodic systems possible on non-additive domains. In particular, this new
approach can be useful to know more about Floquet theory for linear
$q$-difference systems defined on $\overline{q^{\mathbb{Z}}}:=\{q^{n}%
:n\in\mathbb{Z}\}\cup\{0\}$ where $q>1$. By constructing the solution of
matrix exponential equation we establish a canonical Floquet decomposition
theorem. Determining the relation between Floquet multipliers and Floquet
exponents, we give a spectral mapping theorem on closed subsets of reals that
are periodic in shifts. Finally, we show how the constructed theory can be utilized for the stability analysis
of dynamic systems on periodic time scales in shifts.

\end{abstract}
\begin{keyword}
Floquet\sep Hybrid system \sep  Lyapunov \sep Periodicity \sep Shift operators \sep  Stability
\MSC[2010] Primary 34K13 \sep 34C25 \sep Secondary 39A13 \sep 34N05
\end{keyword}
\end{frontmatter}

\section{Introduction}

The theory of periodic systems has taken a
prominent attention in the existing literature due to its tremendous application potential in
engineering, biology, biomathematics, chemistry etc. Floquet theory is an important tool for the investigation of periodic solutions and stability analysis
of dynamic systems. Floquet theory of differential and difference systems can
be found in \cite{[13]} and \cite{[14]}, respectively. Floquet theory of
Volterra equation has been handled in \cite{[7]}. An extension of the Floquet theory
to the systems with memory has been studied in \cite{Belbas}. In \cite{[6]},
Floquet theory has been employed for stability analysis of nonlinear
integro-differential equations. Moreover, a generalization of Floquet
theory in continuous case is studied in \cite{generalized}.

Providing a wide perspective to discrete and continuous analysis, time scale
calculus is a useful theory for the unification of differential and
difference systems. For the sake of brevity, we suppose familiarity
with fundamental theory of time scales. For a comprehensive review on time scale theory, we may
refer readers to \cite{[1]} and \cite{[2]}. Unification of discrete and continuous dynamic systems under the theory of
time scales avoids the separate studies for differential and difference systems by using the similar
arguments. Motivated by unification and extension capabilities of time scale calculus, the researchers in recent years have been developing the time scale analogues of existing results for difference, $q-$difference, and
differential equations. For instance in \cite{[5]}, the authors construct a
Floquet theory for additive periodic time scales and focus on Putzer
representations of matrix logarithms. We use the terminology "additive periodic time
scale" to refer to an arbitrary, closed, non-empty subset $\mathbb{T}$ of reals
satisfying the following property (\cite{Kaufmann}):%

\begin{equation}
\text{there exists a fixed }P\in\mathbb{T}\text{ such that }t\pm P\in\mathbb{T}%
\text{ for all }t\in\mathbb{T}\text{.}\label{P}%
\end{equation}
In \cite{[4]}, DaCunha unified Floquet theory for nonautonomous linear dynamic
systems based on Lyapunov transformations by using matrix exponential on time
scales (see \cite[Section 5]{[1]}). Afterwards, DaCunha and Davis improved the
results of \cite{[4]} in \cite{[3]}. Note that the results in \cite{[3]} and \cite{[4]} regarding Floquet theory are valid only on additive periodic time scales. This
strong restriction prevents investigation of periodicity on very important particular time
scales. For instance, the $q$-difference equations are established on the time
scale%
\[
\overline{q^{\mathbb{Z}}}:=\left\{  q^{n}:n\in\mathbb{Z}\right\}  \cup\left\{
0\right\}  \text{, }q>1
\]
which is not additive periodic. Hence, the existing unified Floquet theory does not cover the systems of $q$-difference equations. A $q$-difference equation is an
equation including a $q$-derivative $D_{q}$, given by%
\[
D_{q}\left(  f\right)  \left(  t\right)  =\frac{f\left(  qt\right)  -f\left(
t\right)  }{\left(  q-1\right)  t}\text{,\ \ }t\in q^{\mathbb{Z}}\text{,}%
\]
of its unknown function. Observe that the $q-$derivative $D_{q}\left(
f\right)  $ of a function $f$ turns into ordinary derivative $f^{\prime}$ if
we let $q\rightarrow1$. The theory of $q$-difference equations is a useful tool for the discretization of differential equations used for modeling continuous processes
 (see e.g. \cite{dobrogovska}, \cite{Malkiewicz},
\cite{ostrovska}, and references therein). In \cite{pulita} the author says "\textit{in the }$p-$\textit{adic context, }$q$%
\textit{-difference equations are not simply a discretization of solutions of
differential equations, but they are actually equal"}. We may also refer to
\cite{andre} for further discussion about the equivalence between
$q$-difference equations and differential equations. There is a vast literature on the existence of periodic solutions of differential equations, unlike the existence of periodic solutions of $q$-difference equations. Thus, it is of importance to study the existence of periodic solutions of $q$-difference equations.

In recent years, the shift operators, denoted $\delta_{\pm}\left(  s,t\right)
$, are introduced to construct delay dynamic equations and a new periodicity
concept on time scales (see \cite{[8]},
\cite{shift}, and \cite{[11]}). We give a detailed information about the shift
operators in further sections. We may also refer to the studies \cite{[9]},
\cite{[10]}, and \cite{[11]}  for the basic definitions, properties and some
applications of shift operators on time scales. In particular, we direct the
readers to \cite{[8]} for the construction of new periodicity concept on time
scales. The motivation of new periodicity concept in \cite{[8]} stems from the
following ideas:

\begin{enumerate}
\item[I.1.] Addition is not always the only way to step forward and backward
on a time scale, for instance, the operators $\delta_{\pm}\left(
2,t\right)  =2^{\pm1}t$ determine backward and forward shifts on the
time scale $\left\{  2^{n}:n\in\mathbb{Z}\right\}  \cup\left\{  0\right\}  $

\item[I.2.] We may use shift operators $\delta_{\pm}$ with certain properties to obtain a backward and forward motion on a general time scale. Similar to (\ref{P}) a periodic time scale in shifts can be
defined to be the one satisfying the following property:%

\begin{equation}
\text{there exists a fixed }P\in\mathbb{T}\text{ such that }\delta_{\pm}\left(  P,t\right)  \in\mathbb{T}\text{ for all }t\in\mathbb{T}.%
\end{equation}

%
%\[
%\delta_{\pm}\left(  P,t\right)  \in\mathbb{T}\text{ for all }t\in\mathbb{T}%
%\]
%for a fixed $P\in\mathbb{T}$.
\end{enumerate}

This approach enables the study of periodicity notion on a large class of time scales that are not necessarily additive periodic. For instance, the time
scale $\overline{q^{\mathbb{Z}}}$ is periodic in shifts $\delta_{\pm}\left(
s,t\right)  =s^{\pm}t$ since
\[
\delta_{\pm}\left(  q,t\right)  =q^{\pm}t\in\mathbb{T}\text{ for all }%
t\in\mathbb{T}\text{.}%
\]
Therefore, one may define a $q^{k}$-periodic function $f$ on $\overline
{q^{\mathbb{Z}}}$ as follows:%
\[
f\left(  q^{\pm k}t\right)  =f\left(  t\right)  \text{ for all }t\in
\overline{q^{\mathbb{Z}}}\text{ and a fixed }k\in\left\{  1,2,\ldots\right\}
.
\]
More generally, a $T$-periodic function $f$ on a $P$-periodic time scale
$\mathbb{T}$ in shifts $\delta_{\pm}$ can be defined as follows%
\[
f\left(  \delta_{\pm}\left(  T,t\right)  \right)  =f\left(  t\right)  \text{
for all }t\in\mathbb{T}\text{ and a fixed }T\in\left[  P,\infty\right)
\cap\mathbb{T}\text{.}%
\]

In this paper, we use Lyapunov transformation (see \cite[Definition 2.1]{[3]})
and the new periodicity concept developed in \cite{[8]} to construct a unified
Floquet theory for hybrid systems on hybrid domains. As an alternative to the existing literature,
our Floquet theory and stability results are valid on more time scales, such
as $\overline{q^{\mathbb{Z}}}$ and%
\[
\cup_{k=1}^{\infty}\left[  3^{\pm k},2.3^{\pm k}\right]  \cup\left\{
0\right\}
\]
which cannot be covered by \cite{[3]} and \cite{[4]}. It should be mentioned
that periodicity notion and Floquet theory on the time scale%
\[
q^{\mathbb{N}_{0}}=\left\{  q^{n}:q>1\text{ and }n=0,1,2,\ldots\right\}
\]
have been studied in \cite{[sarajevo]} and \cite{[15]}. In \cite{[sarajevo]} and \cite{[15]} a
$\omega$-periodic function $f$ on $q^{\mathbb{N}_{0}}$ is defined to be the one
satisfying%
\[
f\left(  q^{\omega}t\right)  =\frac{1}{q^{\omega}}f\left(  t\right)  \text{
for all }t\in q^{\mathbb{N}_{0}}\text{ and a fixed }\omega\in\left\{
1,2,\ldots\right\}  .
\]
According to this periodicity definition the function $g\left(  t\right)
=1/t$ is $q$-periodic over the time scale $q^{\mathbb{N}_{0}}$. Unlike the conventional periodic functions in the existing literature, the function
$g\left(  t\right)  =1/t$ does not repeat its values at each period
$t,q^{\omega}t,\left(  q^{\omega}\right)  ^{2}t,...$. In parallel with conventional
periodicity perception, we define a periodic function to be the one repeating its values
at each forward/backward step on its domain with a certain size. For instance, according to our definition the
function $h(t)=\left(  -1\right)  ^{\frac{\ln t}{\ln q}}$ is a $q^{2}%
$-periodic function on $q^{\mathbb{Z}}=\left\{  q>1:q^{n},n\in\mathbb{Z}%
\right\}  $ since%
\[
h\left(  \delta_{\pm}\left(  q^{2},t\right)  \right)  =\left(  -1\right)
^{\frac{\ln t}{\ln q}\pm2}=\left(  -1\right)  ^{\frac{\ln t}{\ln q}}=h\left(
t\right)  .
\]
Obviously, the function $h\left(  t\right)  $ repeats the values $-1$ and $1$
at each backward/forward step with the size $q^{2}$. Consequently, the use
of new periodicity concept based on shifts $\delta_{\pm}$ in Floquet theory
provides not only a generalization but also an alternative approach to already
existing literature in particular cases (e.g. \cite{[sarajevo]} and \cite{[15]}).

We organize the rest of the paper as follows: In Section 2, we introduce the basic concepts and in Section 3 we develop Floquet theory based on new periodicity concept on time scales. We end the paper by applying our results to stability analysis of linear systems.

\section{Preliminaries}

\subsection{Matrix exponential}

In this section we give some basic definitions and results that we require in
our further analysis.

A time scale, denoted by $\mathbb{T}$, is an arbitrary, nonempty and\ closed
subset of real numbers. A time scale may have a discrete or connected
structure as well as a hybrid structure consisting of intervals and isolated
points. The operator $\sigma:$ $\mathbb{T\rightarrow T}$ called forward jump
operator is defined by $\mathbb{\mathbb{\sigma}}\left(  t\right)
:=\inf\left\{  s\in\mathbb{T},s>t\right\}  $. The step size function
$\mu:\mathbb{T\rightarrow R}$ is given by $\mu\left(  t\right)  :=\sigma
\left(  t\right)  -t$. We say a point $t\in\mathbb{T}$ is right dense if
$\mu\left(  t\right)  =0$, and right scattered if $\mu\left(  t\right)  >0$.
Furthermore, a point $t\in\mathbb{T}$ is said to be left dense if $\rho\left(
t\right)  :=\sup\left\{  s\in\mathbb{T},s<t\right\}  =t$ and left scattered if
$\rho\left(  t\right)  <t$. A function $f:\mathbb{T}\rightarrow\mathbb{R}$ is
said to be $rd$-continuous if it is continuous at right dense points and its
left sided limits exists at left dense points. The set $\mathbb{T}^{k}$ is
defined in the following way: If $\mathbb{T}$ has a left-scattered maximum
$m$, then $\mathbb{T}^{k}=\mathbb{T}-\left\{  m\right\}  ;$ otherwise
$\mathbb{T}^{k}=\mathbb{T}.$ Moreover, the delta derivative of a function
$f:\mathbb{T}\rightarrow\mathbb{R}$ at a point $t\in\mathbb{T}^{k}$ is defined
by%
\[
f^{\Delta}\left(  t\right)  :=\lim_{\substack{s\rightarrow t\\s\neq
\sigma\left(  t\right)  }}\frac{f\left(  \sigma\left(  t\right)  \right)
-f\left(  s\right)  }{\sigma\left(  t\right)  -s}.
\]

\begin{definition}
\label{def2.1}A function $p:\mathbb{T\rightarrow R}$ is said to be regressive
if $1+\mu\left(  t\right)  p\left(  t\right)  \neq0$ for all $t\in
\mathbb{T}^{k}.$ We denote by $\mathcal{R}$ the set of all regressive functions.
\end{definition}

\begin{definition}
[Exponential function]Let $\varphi\in\mathcal{R}$ and $\mu(t)>0$ for all
$t\in\mathbb{T}$. The \emph{exponential function} on $\mathbb{T}$ is defined
by
\[
e_{\varphi}(t,s)=\exp\left(  \int_{s}^{t}\!\frac{1}{\mu(z)}\mbox{Log}(1+\mu
(z)\varphi(z))\,\Delta z\right)  .
\]

\end{definition}

It is well known that if $p\in\mathcal{R}^{+}$, then $e_{p}(t,s)>0$ for all
$t\in\mathbb{T}$. Also, the exponential function $y(t)=e_{p}(t,s)$ is the
solution to the initial value problem $y^{\Delta}=p(t)y,\,y(s)=1$. Other
properties of the exponential function are given in the following lemma:

\begin{lemma}
\cite[Theorem 2.36]{[1]} Let $p,q\in\mathcal{R}$. Then

\begin{itemize}
\item[i.] $e_{0}(t,s)\equiv1$ and $e_{p}(t,t)\equiv1$;

\item[ii.] $e_{p}(\sigma(t),s)=(1+\mu(t)p(t))e_{p}(t,s)$;

\item[iii.] $\frac{1}{e_{p}(t,s)}=e_{\ominus p}(t,s)$ where, $\ominus
p(t)=-\frac{p(t)}{1+\mu(t)p(t)}$;

\item[iv.] $e_{p}(t,s)=\frac{1}{e_{p}(s,t)}=e_{\ominus p}(s,t)$;

\item[v.] $e_{p}(t,s)e_{p}(s,r)=e_{p}(t,r)$;

\item[vi.] $\left(  \frac{1}{e_{p}(\cdot,s)}\right)  ^{\Delta}=-\frac
{p(t)}{e_{p}^{\sigma}(\cdot,s)}$.
\end{itemize}
\end{lemma}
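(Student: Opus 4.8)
The plan is to derive all six identities from the explicit definition of $e_{\varphi}(t,s)$ together with the recalled fact that $y(t)=e_p(t,s)$ solves $y^{\Delta}=p(t)y$, $y(s)=1$. Writing the integrand as the cylinder transformation $\xi_z(p):=\tfrac{1}{\mu(z)}\mathrm{Log}(1+\mu(z)p(z))$, three of the properties are immediate from elementary features of the delta integral. For (i), the integral $\int_t^t \xi_z(p)\,\Delta z$ over a degenerate interval vanishes, so $e_p(t,t)=\exp(0)=1$; and with $\varphi\equiv 0$ the integrand is $\tfrac{1}{\mu(z)}\mathrm{Log}(1)=0$, giving $e_0(t,s)\equiv 1$. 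For (v), additivity of the delta integral, $\int_r^s + \int_s^t = \int_r^t$, turns the product $e_p(t,s)e_p(s,r)$ into the single exponential $e_p(t,r)$. The first equality in (iv) is then the special case $r=t$ of (v) combined with (i): $e_p(t,s)e_p(s,t)=e_p(t,t)=1$.

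Next I would treat (ii), which links the definition to the jump operator. First I compute $\int_t^{\sigma(t)}\xi_z(p)\,\Delta z = \mu(t)\,\xi_t(p)=\mathrm{Log}(1+\mu(t)p(t))$, using that the delta integral over $[t,\sigma(t))$ picks up only the single step of length $\mu(t)$. Splitting $\int_s^{\sigma(t)}=\int_s^t+\int_t^{\sigma(t)}$ and exponentiating then yields $e_p(\sigma(t),s)=(1+\mu(t)p(t))e_p(t,s)$; equivalently this is the ``simple useful formula'' $f^{\sigma}=f+\mu f^{\Delta}$ applied to $f=e_p(\cdot,s)$ together with $e_p^{\Delta}=p\,e_p$. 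The heart of the lemma is (iii). Here the regressivity hypothesis $1+\mu p\neq 0$ is exactly what makes $\mathrm{Log}(1+\mu p)$ well defined and the associated quantity invertible, and the key algebraic identity is
\[
1+\mu(t)(\ominus p)(t)=1-\frac{\mu(t)p(t)}{1+\mu(t)p(t)}=\frac{1}{1+\mu(t)p(t)},
\]
so that $\mathrm{Log}(1+\mu(\ominus p))=-\mathrm{Log}(1+\mu p)$ pointwise. Consequently the integrand defining $e_{\ominus p}$ is the negative of the one defining $e_p$, whence $e_{\ominus p}(t,s)=\exp\big(-\int_s^t\xi_z(p)\,\Delta z\big)=1/e_p(t,s)$.

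The remaining claims reduce to the ones above. The second equality in (iv) is (iii) with the roles of $t$ and $s$ exchanged. For (vi) I would start from (iii), which gives $1/e_p(\cdot,s)=e_{\ominus p}(\cdot,s)$; since $e_{\ominus p}(\cdot,s)$ solves $y^{\Delta}=(\ominus p)y$, we get $\big(1/e_p(\cdot,s)\big)^{\Delta}=(\ominus p)(t)\,e_{\ominus p}(t,s)=(\ominus p)(t)/e_p(t,s)$, and substituting $\ominus p=-p/(1+\mu p)$ together with $e_p(\sigma(t),s)=(1+\mu(t)p(t))e_p(t,s)$ from (ii) rewrites this as $-p(t)/e_p^{\sigma}(\cdot,s)$. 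I expect no serious obstacle beyond careful bookkeeping; the one point demanding genuine care is the logarithm identity underlying (iii) and (vi), where regressivity must be invoked to guarantee $1+\mu p\neq 0$ (and, should right-dense points occur, to interpret $\xi_z$ through its limiting value $\xi_z(p)=p(z)$ when $\mu(z)=0$).
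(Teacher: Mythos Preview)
Your proposal is correct and follows the standard derivation of these identities from the cylinder-transformation definition of $e_p$. However, note that the paper does not prove this lemma at all: it is quoted verbatim as \cite[Theorem 2.36]{[1]} from Bohner--Peterson and stated without proof, as a preliminary result. So there is no proof in the paper to compare against; your argument is essentially the textbook proof that the citation points to.
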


\begin{definition}
[Matrix exponential]\label{def2.2}\cite[Definition 5.18]{[1]} Let $t_{0}%
\in\mathbb{T}$ and assume that $A\in\mathcal{R}$ is an $n\times n$
matrix-valued function. The unique matrix solution of the IVP%
\[
Y^{\Delta}\left(  t\right)  =A\left(  t\right)  Y,\text{ }Y\left(
t_{0}\right)  =I,
\]
where $I$ denotes as usual $n\times n$ identity matrix, is called the matrix
exponential function, and is denoted by $e_{A}\left(  .,t_{0}\right)  .$
\end{definition}

\begin{theorem}
\label{thm2.1}\cite[Theorem 5.21]{[1]} Let $A,B\in\mathcal{R}$ be $n\times n$
matrix-valued functions on time scale $\mathbb{T}$, then we have
\end{theorem}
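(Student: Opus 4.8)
The plan is to lean entirely on the defining initial value problem in Definition \ref{def2.2} together with the uniqueness of solutions of regressive matrix dynamic systems: the matrix exponential $e_A(\cdot,t_0)$ is \emph{the} solution of $Y^\Delta=A(t)Y$, $Y(t_0)=I$, so any two matrix functions that satisfy the same such problem must coincide. Accordingly, for each listed identity I would fix all but the flowing variable $t$, check that the two sides solve one and the same initial value problem $Y^\Delta=A(t)Y$ with a common value at the base point, and invoke uniqueness. Items (i) and (ii) need no comparison: $e_0(\cdot,s)$ solves $Y^\Delta=0$, $Y(s)=I$, whose unique solution is the constant $I$, while $e_A(t,t)=I$ is immediate from the definition; and (ii) follows from the elementary time scale identity $f(\sigma(t))=f(t)+\mu(t)f^\Delta(t)$ applied to $f=e_A(\cdot,s)$, which gives $e_A(\sigma(t),s)=(I+\mu(t)A(t))e_A(t,s)$.

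For the semigroup identity (v) I would fix $s$ and $r$ and regard $Y(t):=e_A(t,s)\,e_A(s,r)$ as a function of $t$. Since $e_A(s,r)$ is a constant matrix, $Y^\Delta(t)=A(t)e_A(t,s)\,e_A(s,r)=A(t)Y(t)$, and at $t=s$ we get $Y(s)=e_A(s,r)$, which is exactly the value of $e_A(\cdot,r)$ at $t=s$. As both $Y$ and $e_A(\cdot,r)$ solve $Y^\Delta=A(t)Y$ with the same value at $s$, uniqueness yields $e_A(t,s)e_A(s,r)=e_A(t,r)$. Specializing $r=t$ gives $e_A(t,s)e_A(s,t)=e_A(t,t)=I$, which simultaneously establishes invertibility of $e_A(t,s)$ and proves the first equality in (iv), namely $e_A(t,s)=e_A^{-1}(s,t)$.

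The inverse and adjoint identities (iii)--(iv) are where the matrix setting demands genuine care, and I expect this to be the main obstacle. The delta product rule in the matrix case reads $(XY)^\Delta=X^\Delta Y+X^\sigma Y^\Delta$, so the shifted factor $X^\sigma$ intrudes and, because matrices need not commute, the left/right placement of every factor must be tracked. To identify $e_A^{-1}(t,s)$ I would differentiate $e_A(t,s)\,e_A^{-1}(t,s)=I$, solve for $\bigl(e_A^{-1}\bigr)^\Delta$ using (ii) to handle $e_A^\sigma=(I+\mu A)e_A$, and then verify that the conjugate transpose $\bigl(e_A^{-1}(t,s)\bigr)^{*}$ satisfies the dynamic equation generated by $\ominus A^{*}$ with value $I$ at $t=s$; uniqueness gives $e_A^{-1}(t,s)=e_{\ominus A^{*}}^{*}(t,s)$, and combining this with $e_A(t,s)=e_A^{-1}(s,t)$ yields the remaining equality in (iv). For the product formula (vi), under the hypothesis that $e_A(t,s)$ commutes with $B(t)$, I would apply the product rule to $e_A(\cdot,s)e_B(\cdot,s)$, substitute $e_B^\sigma=(I+\mu B)e_B$, and collect terms into $\bigl(A+B+\mu AB\bigr)e_Ae_B=(A\oplus B)\,e_A(t,s)e_B(t,s)$; since this product also equals $I$ at $t=s$, uniqueness identifies it with $e_{A\oplus B}(t,s)$. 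Throughout, the regressivity of $A$ and $B$ from Definition \ref{def2.1} is what guarantees that $I+\mu A$ and $I+\mu B$ are invertible, so that $\ominus$ and $\oplus$ are well defined and the underlying uniqueness theorem applies.
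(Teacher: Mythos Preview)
The paper does not prove this theorem at all: it is quoted verbatim from Bohner and Peterson \cite[Theorem 5.21]{[1]} and stated without argument, so there is no ``paper's own proof'' to compare your proposal against. Your approach---verify that both sides of each identity solve the same regressive matrix IVP and invoke uniqueness from Definition~\ref{def2.2}---is the standard one and is essentially how the result is established in the cited source.

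Two small remarks on alignment with the paper's restatement. First, your discussion of items (iii)--(iv) involving $e_{\ominus A^{*}}^{*}$ addresses parts of the original Theorem~5.21 that the paper has \emph{omitted} from its abbreviated list; the only inverse identity actually stated here is $e_{A}(t,s)=e_{A}^{-1}(s,t)$, which you already obtain cleanly as the $r=t$ specialization of the semigroup law. Second, you are right to flag a commutativity hypothesis for the product formula $e_{A}e_{B}=e_{A\oplus B}$: the paper's restatement drops it, but in Bohner--Peterson the identity is stated under the assumption that $e_{A}(t,s)$ and $B(t)$ commute, exactly as you use it.
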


\begin{enumerate}
\item $e_{0}\left(  t,s\right)  \equiv I$ and $e_{A}\left(  t,t\right)  \equiv
I$, where $0$ and $I$ indicate the zero matrix and the identity matrix, respectively,

\item $e_{A}\left(  \sigma\left(  t\right)  ,s\right)  =\left(  I+\mu\left(
t\right)  A\left(  t\right)  \right)  e_{A}\left(  t,s\right)  ;$

\item $e_{A}\left(  t,s\right)  =e_{A}^{-1}\left(  s,t\right)  ;$

\item $e_{A}\left(  t,s\right)  e_{A}\left(  s,r\right)  =e_{A}\left(
t,r\right)  ;$

\item $e_{A}\left(  t,s\right)  e_{B}\left(  t,s\right)  =e_{A\oplus B}\left(
t,s\right)  $, where
\[
\left(  A\oplus B\right)  \left(  t\right)  =A\left(  t\right)  +B\left(
t\right)  +\mu\left(  t\right)  A\left(  t\right)  B\left(  t\right)  .
\]

\end{enumerate}

\begin{theorem}
\label{thm2.2}\cite[Theorem 5.24]{[1]}(variation of constants). Let
$A\in\mathcal{R}$ be an $n\times n$ matrix-valued function on $\mathbb{T}$ and
suppose that $f:\mathbb{T}\rightarrow\mathbb{R}^{n}$ is $rd$-continuous. Let
$t_{0}\in\mathbb{T}$ and $y_{0}\in\mathbb{R}^{n}.$ Then the initial value
problem%
\[
y^{\Delta}=A\left(  t\right)  y+f\left(  t\right)  ,\text{ }y\left(
t_{0}\right)  =y_{0}%
\]
has a unique solution $y:\mathbb{T}\rightarrow\mathbb{R}^{n}.$ Moreover, this
solution is given by%
\[
y\left(  t\right)  =e_{A}\left(  t,t_{0}\right)  y_{0}+%
%TCIMACRO{\dint \limits_{t_{0}}^{t}}%
%BeginExpansion
{\displaystyle\int\limits_{t_{0}}^{t}}
%EndExpansion
e_{A}\left(  t,\sigma\left(  \tau\right)  \right)  f\left(  \tau\right)
\Delta\tau.
\]

\end{theorem}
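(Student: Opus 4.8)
The plan is to verify directly that the proposed closed form satisfies the initial value problem and then to invoke uniqueness, rather than to re-derive the formula from scratch. Existence and uniqueness for regressive dynamic IVPs is standard: the homogeneous problem $y^{\Delta}=A(t)y$, $y(t_{0})=y_{0}$, is solved by $y(t)=e_{A}(t,t_{0})y_{0}$ by Definition \ref{def2.2}, and since $A\in\mathcal{R}$ the matrix $I+\mu(t)A(t)$ is invertible, so the only solution of the homogeneous equation with zero initial data is the trivial one. Consequently, once I exhibit one solution of the nonhomogeneous problem, uniqueness applied to the difference of two putative solutions will finish the argument.

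First I would check the initial condition. Writing
\[
y(t)=e_{A}(t,t_{0})y_{0}+\int_{t_{0}}^{t}e_{A}(t,\sigma(\tau))f(\tau)\,\Delta\tau,
\]
the integral collapses at $t=t_{0}$ and $e_{A}(t_{0},t_{0})=I$ by Theorem \ref{thm2.1}(1), so $y(t_{0})=y_{0}$. Next I would differentiate. The first summand gives $A(t)e_{A}(t,t_{0})y_{0}$ directly from Definition \ref{def2.2}. The delicate term is the integral, in which the variable $t$ occurs both in the upper limit and inside the integrand; here the main tool is the Leibniz-type differentiation rule on time scales, which asserts that for $g(t)=\int_{t_{0}}^{t}h(t,\tau)\,\Delta\tau$ one has $g^{\Delta}(t)=\int_{t_{0}}^{t}h^{\Delta_{t}}(t,\tau)\,\Delta\tau+h(\sigma(t),t)$, where $h^{\Delta_{t}}$ denotes the delta derivative in the first variable.

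Applying this with $h(t,\tau)=e_{A}(t,\sigma(\tau))f(\tau)$, I would compute $h^{\Delta_{t}}(t,\tau)=A(t)e_{A}(t,\sigma(\tau))f(\tau)$ from Definition \ref{def2.2}, and the boundary term $h(\sigma(t),t)=e_{A}(\sigma(t),\sigma(t))f(t)=f(t)$ using $e_{A}(s,s)=I$ from Theorem \ref{thm2.1}(1). Since $A(t)$ does not depend on the integration variable $\tau$, it may be pulled outside the integral, yielding
\[
y^{\Delta}(t)=A(t)e_{A}(t,t_{0})y_{0}+A(t)\int_{t_{0}}^{t}e_{A}(t,\sigma(\tau))f(\tau)\,\Delta\tau+f(t)=A(t)y(t)+f(t),
\]
so that $y$ solves the IVP. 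Finally, if $y_{1},y_{2}$ both solve it, then $w=y_{1}-y_{2}$ satisfies the homogeneous equation with $w(t_{0})=0$, whence $w\equiv0$ by the uniqueness noted above.

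I expect the only genuine obstacle to be the justification of the Leibniz differentiation rule on time scales together with the careful handling of the boundary term $h(\sigma(t),t)$; the remaining steps are routine applications of the properties of the matrix exponential recorded in Definition \ref{def2.2} and Theorem \ref{thm2.1}.
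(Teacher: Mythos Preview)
Your verification is correct and is in fact the standard argument for this result. Note, however, that the paper does not supply its own proof of this theorem: it is quoted verbatim from \cite[Theorem 5.24]{[1]} as a preliminary tool, so there is no in-paper proof to compare against. Your approach---checking the initial condition, applying the time-scale Leibniz rule to differentiate the integral term, and invoking uniqueness for the homogeneous problem---is exactly the proof given in the cited reference, so nothing is missing.
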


\subsection{Shift operators and new periodicity concept \ based on shift
operators}

In this section, we aim to introduce basic definitions and properties of shift
operators. The following definitions, lemmas and examples can be found in
\cite{[8]}, \cite{[9]}, \cite{[10]} and \cite{[11]}.

\begin{definition}[Shift operators]
\label{def3.1}Let $\mathbb{T}^{\ast }$ be a nonempty subset of the time
scale $\mathbb{T}$ including a fixed number $t_{0}\in \mathbb{T}^{\ast }$
such that there exists operators $\delta _{\pm }:\left[ t_{0},\infty \right)
_{\mathbb{T}}\times \mathbb{T}^{\ast }\rightarrow \mathbb{T}^{\ast }$
satisfying the following properties:

\begin{enumerate}
\item \label{def3.1_1}The functions $\delta _{\pm }$ are strictly increasing
with respect to their second arguments, if%
\begin{equation*}
\left( T_{0},t\right) ,\left( T_{0},u\right) \in \mathcal{D}_{\pm }:=\left\{
\left( s,t\right) \in \left[ t_{0},\infty \right) _{\mathbb{T}}\times
\mathbb{T}^{\ast }:\delta _{\pm }\left( s,t\right) \in \mathbb{T}^{\ast
}\right\} ,
\end{equation*}%
then%
\begin{equation*}
T_{0}\leq t<u\text{ implies }\delta _{\pm }\left( T_{0},t\right) <\delta
_{\pm }\left( T_{0},u\right) ;
\end{equation*}

\item If $\left( T_{1},u\right) ,\left( T_{2},u\right) \in \mathcal{D}_{-}$
with $T_{1}<T_{2},$ then $\delta _{-}\left( T_{1},u\right) >\delta
_{-}\left( T_{2},u\right) $ and if $\left( T_{1},u\right) ,\left(
T_{2},u\right) \in \mathcal{D}_{+}$ with $T_{1}<T_{2},$ then $\delta
_{+}\left( T_{1},u\right) <\delta _{+}\left( T_{2},u\right) ;$

\item If $t\in \left[ t_{0},\infty \right) _{\mathbb{T}},$ then $\left(
t,t_{0}\right) \in \mathcal{D}_{+}$ and $\delta _{+}\left( t,t_{0}\right)
=t. $ Moreover, if $t\in \mathbb{T}^{\ast },$ then $\left( t_{0},t\right)
\in \mathcal{D}_{+}$ and $\delta _{+}\left( t_{0},t\right) =t;$
\item
\begin{enumerate}
\item If $\left( s,t\right) \in \mathcal{D}_{+},$ then $\left( s,\delta
_{+}\left( s,t\right) \right) \in \mathcal{D}_{-}$ and $\delta _{-}\left(
s,\delta _{+}\left( s,t\right) \right) =t;$

\item If $\left( s,t\right) \in \mathcal{D}_{-},$ then $\left( s,\delta
_{-}\left( s,t\right) \right) \in \mathcal{D}_{+}$ and $\delta _{+}\left(
s,\delta _{-}\left( s,t\right) \right) =t;$
\end{enumerate}
\item
\begin{enumerate}
\item If $\left( s,t\right) \in \mathcal{D}_{+}$ and $\left( u,\delta
_{+}\left( s,t\right) \right) \in \mathcal{D}_{-},$ then $\left( s,\delta
_{-}\left( u,t\right) \right) \in \mathcal{D}_{+}$ and $\delta _{-}\left(
u,\delta _{+}\left( s,t\right) \right) =\delta _{+}\left( s,\delta
_{-}\left( u,t\right) \right) ;$

\item If $\left( s,t\right) \in \mathcal{D}_{-}$ and $\left( u,\delta
_{-}\left( s,t\right) \right) \in \mathcal{D}_{+},$ then $\left( s,\delta
_{+}\left( u,t\right) \right) \in \mathcal{D}_{-}$ and $\delta _{+}\left(
u,\delta _{-}\left( s,t\right) \right) =\delta _{-}\left( s,\delta
_{+}\left( u,t\right) \right) .$
\end{enumerate}
\end{enumerate}

\noindent Then the operators $\delta _{+}$ and $\delta _{-}$ are called forward and
backward shift operators associated with the initial point $t_{0}$ on $%
\mathbb{T}^{\ast }$ and the sets $\mathcal{D}_{+}$ and $\mathcal{D}_{-}$ are
domain of the operators, respectively.
\end{definition}

\begin{example}
\label{rem 3.1}The following table shows the shift operators $\delta_{\pm
}\left(  s,t\right)  $ on some time scales:%
\[%
\begin{tabular}
[c]{|c|c|c|c|c|}\hline
$\mathbb{T}$ & $t_{0}$ & $\mathbb{T}^{\ast}$ & $\delta_{-}\left(  s,t\right)
$ & $\delta+\left(  s,t\right)  $\\\hline
$\mathbb{R}$ & $0$ & $\mathbb{R}$ & $t-s$ & $t+s$\\\hline
$\mathbb{Z}$ & $0$ & $\mathbb{Z}$ & $t-s$ & $t+s$\\\hline
$q^{\mathbb{Z}}\cup\left\{  0\right\}  $ & $1$ & $q^{\mathbb{Z}}$ & $\frac
{t}{s}$ & $st$\\\hline
$\mathbb{N}^{1/2}$ & $0$ & $\mathbb{N}^{1/2}$ & $\left(  t^{2}-s^{2}\right)
^{1/2}$ & $\left(  t^{2}+s^{2}\right)  ^{1/2}$\\\hline
\end{tabular}
\ \ .
\]

\end{example}

\begin{lemma}
\label{lem3.1}Let $\delta_{\pm}$ be the shift operators associated with the
initial point $t_{0}.$ Then we have the following:
\end{lemma}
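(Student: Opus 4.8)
The plan is to derive every item of the lemma directly from the five defining axioms (1)--(5) of Definition \ref{def3.1}; the whole argument is axiom-chasing, and each identity reduces to a single substitution into the appropriate axiom together with the monotonicity clauses. I would treat the items in the order in which they are listed, so that the cancellation identities established first can be reused in the symmetry and range statements later.

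First I would dispose of the boundary and cancellation identities such as $\delta_{-}(t,t)=t_{0}$ and $\delta_{-}(t_{0},t)=t$. For the former, axiom (3) supplies $(t,t_{0})\in\mathcal{D}_{+}$ together with $\delta_{+}(t,t_{0})=t$; feeding this into axiom (4) with $s=t$ gives $(t,\delta_{+}(t,t_{0}))\in\mathcal{D}_{-}$ and $\delta_{-}(t,\delta_{+}(t,t_{0}))=t_{0}$, which is exactly $\delta_{-}(t,t)=t_{0}$. The identity $\delta_{-}(t_{0},t)=t$ is obtained in the same way from $\delta_{+}(t_{0},t)=t$. The bijection-type statement---that $\delta_{+}(s,t)=u$ forces $\delta_{-}(s,u)=t$ and conversely---is nothing but axiom (4) read in each direction, so these cost at most a line apiece.

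Next I would handle the interchange and symmetry identities, e.g. $\delta_{+}(t,\delta_{-}(s,t_{0}))=\delta_{-}(s,t)$ and $\delta_{+}(u,t)=\delta_{+}(t,u)$. These are the substantive items: they rest on axiom (5), the commutation law $\delta_{\mp}(u,\delta_{\pm}(s,t))=\delta_{\pm}(s,\delta_{\mp}(u,t))$, combined with the cancellation identities just proved and the identity values from axiom (3). For the range statements (that $\delta_{+}(s,t)$ and, under the stated hypotheses, $\delta_{-}(s,t)$ remain in $[t_{0},\infty)_{\mathbb{T}}$) I would invoke the monotonicity axioms (1)--(2): since $\delta_{\pm}(s,\cdot)$ is strictly increasing and its value at the relevant boundary point is already pinned down by the cancellation identities, the desired inequalities follow by comparison.

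The main obstacle is not any single computation but the bookkeeping of the domains $\mathcal{D}_{+}$ and $\mathcal{D}_{-}$: axioms (4) and (5) may be applied only after one has checked that the pairs in question actually lie in the correct domain, and a careless substitution can invoke $\delta_{\pm}$ outside where it is defined. I would therefore verify domain membership explicitly at each step before applying the interchange law, paying particular attention to the symmetry identity $\delta_{+}(u,t)=\delta_{+}(t,u)$, where one must route through both $\mathcal{D}_{+}$ and $\mathcal{D}_{-}$ and confirm that every intermediate pair is admissible.
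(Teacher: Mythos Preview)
The paper does not actually prove Lemma~\ref{lem3.1}; it is quoted from the references \cite{[8]}, \cite{[9]}, \cite{[10]}, \cite{[11]} (as announced at the start of the subsection), and no proof environment follows the statement. So there is no ``paper's own proof'' to compare against here.

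That said, your plan is exactly the standard one used in those sources: each of the ten items is obtained by a direct substitution into axioms (3)--(5) of Definition~\ref{def3.1}, with the monotonicity clauses (1)--(2) supplying the order/range statements. Your outline of items (1)--(3) is correct line for line, and you have correctly flagged the only genuine subtlety, namely that every application of axiom (5) requires first checking that the intermediate pair lies in the right domain $\mathcal{D}_{\pm}$. For item (8) (positivity of $\delta_{+}^{\Delta_{t}}(s,\cdot)$) you will need the $\Delta$-differentiability hypothesis together with strict monotonicity in the second argument, which you did not mention explicitly but which fits the same pattern. Nothing in your proposal is wrong or missing; it simply cannot be compared to a proof that the present paper does not contain.
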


\begin{enumerate}
\item $\delta_{-}\left(  t,t\right)  =t_{0}$ for all $t\in\left[  t_{0}%
,\infty\right)  _{\mathbb{T}};$

\item $\delta_{-}\left(  t_{0},t\right)  =t$ for all $t\in\mathbb{T}^{\ast};$

\item If $\left(  s,t\right)  \in\mathcal{D}_{+},$ then $\delta_{+}\left(
s,t\right)  =u$ implies $\delta_{-}\left(  s,u\right)  =t$ and if $\left(
s,u\right)  \in\mathcal{D}_{-},$ then $\delta_{-}\left(  s,u\right)  =t$
implies $\delta_{+}\left(  s,t\right)  =u;$

\item $\delta_{+}\left(  t,\delta_{-}\left(  s,t_{0}\right)  \right)
=\delta_{-}\left(  s,t\right)  $ for all $\left(  s,t\right)  \in
\mathcal{D}_{+}$ with $t\geq t_{0};$

\item $\delta_{+}\left(  u,t\right)  =\delta_{+}\left(  t,u\right)  $ for all
$\left(  u,t\right)  \in\left(  \left[  t_{0},\infty\right)  _{\mathbb{T}%
}\times\left[  t_{0},\infty\right)  _{\mathbb{T}}\right)  \cap\mathcal{D}%
_{+};$

\item $\delta_{+}\left(  s,t\right)  \in\left[  t_{0},\infty\right)
_{\mathbb{T}}$ for all $\left(  s,t\right)  \in\mathcal{D}_{+}$ with $t\geq
t_{0};$

\item $\delta_{-}\left(  s,t\right)  \in\left[  t_{0},\infty\right)
_{\mathbb{T}}$ for all $\left(  s,t\right)  \in\left(  \left[  t_{0}%
,\infty\right)  _{\mathbb{T}}\times\left[  s,\infty\right)  _{\mathbb{T}%
}\right)  \cap\mathcal{D}_{-};$

\item If $\delta_{+}\left(  s,.\right)  $ is $\Delta$-differentiable in its
second variable, then $\delta_{+}^{\Delta_{t}}\left(  s,.\right)  >0;$

\item $\delta_{+}\left(  \delta_{-}\left(  u,s\right)  ,\delta_{-}\left(
s,v\right)  \right)  =$ $\delta_{-}\left(  u,v\right)  $ for all $\left(
s,v\right)  \in\left(  \left[  t_{0},\infty\right)  _{\mathbb{T}}\times\left[
s,\infty\right)  _{\mathbb{T}}\right)  \cap\mathcal{D}_{-}$ and $\left(
u,s\right)  \in\left(  \left[  t_{0},\infty\right)  _{\mathbb{T}}\times\left[
u,\infty\right)  _{\mathbb{T}}\right)  \cap\mathcal{D}_{-};$

\item If $\left(  s,t\right)  \in$ $\mathcal{D}_{-}$ and $\delta_{-}\left(
s,t\right)  =t_{0},$ then $s=t.$
\end{enumerate}

\begin{definition}
[Periodicity in shifts]\label{def3.2} Let $\mathbb{T}$ be a time scale with
the shift operators $\delta_{\pm}$ associated with the initial point $t_{0}%
\in\mathbb{T}^{\ast},$ then $\mathbb{T}$ is said to be periodic in shifts
$\delta_{\pm},$ if there exists a $p\in(t_{0},\infty)_{\mathbb{T}^{\ast}}$
such that $\left(  p,t\right)  \in\mathcal{D}_{\mp}$ for all $t\in
\mathbb{T}^{\ast}.$ $P$ is called the period of $\ \mathbb{T}$ if
\[
P=\inf\left\{  p\in(t_{0},\infty)_{\mathbb{T}^{\ast}}:\left(  p,t\right)
\in\mathcal{D}_{\mp}\text{ for all }t\in\mathbb{T}^{\ast}\right\}  >t_{0}.
\]

\end{definition}

Observe that an additive periodic time scale must be unbounded. The following
example indicates that a time scale, periodic in shifts, may be bounded.

\begin{example}
\label{ex new per}The following time scales are not additive periodic but
periodic in shifts $\delta_{\pm}$.

\begin{enumerate}
\item $\mathbb{T}_{1}\mathbb{=}\left\{  \pm n^{2}:n\in\mathbb{Z}\right\}  $,
$\delta_{\pm}(P,t)=\left\{
\begin{array}
[c]{ll}%
\left(  \sqrt{t}\pm\sqrt{P}\right)  ^{2} & \text{if }t>0\\
\pm P & \text{if }t=0\\
-\left(  \sqrt{-t}\pm\sqrt{P}\right)  ^{2} & \text{if }t<0
\end{array}
\right.  $, $P=1$, $t_{0}=0,$

\item $\mathbb{T}_{2}\mathbb{=}\overline{q^{\mathbb{Z}}}$, $\delta_{\pm
}(P,t)=P^{\pm1}t$, $P=q$, $t_{0}=1,$

\item $\mathbb{T}_{3}\mathbb{=}\overline{\mathbb{\cup}_{n\in\mathbb{Z}}\left[
2^{2n},2^{2n+1}\right]  }$, $\delta_{\pm}(P,t)=P^{\pm1}t$, $P=4$, $t_{0}=1,$

\item $\mathbb{T}_{4}\mathbb{=}\left\{  \frac{q^{n}}{1+q^{n}}:q>1\text{ is
constant and }n\in\mathbb{Z}\right\}  \cup\left\{  0,1\right\}  $,
\[
\delta_{\pm}(P,t)=\dfrac{q^{^{\left(  \frac{\ln\left(  \frac{t}{1-t}\right)
\pm\ln\left(  \frac{P}{1-P}\right)  }{\ln q}\right)  }}}{1+q^{\left(
\frac{\ln\left(  \frac{t}{1-t}\right)  \pm\ln\left(  \frac{P}{1-P}\right)
}{\ln q}\right)  }},\ \ P=\frac{q}{1+q},t_{0}=\frac{1}{2}.
\]

\end{enumerate}
\end{example}

Note that the time scale $\mathbb{T}_{4}$ in Example \ref{ex new per} is
bounded above and below and
\[
\mathbb{T}_{4}^{\ast}=\left\{  \frac{q^{n}}{1+q^{n}}:q>1\text{ is constant and
}n\in\mathbb{Z}\right\}  .
\]

\begin{corollary}
\label{Cor 1} Let $\mathbb{T}$ be a time scale that is periodic in shifts
$\delta_{\pm}$ with the period $P$. Then we have%
\begin{equation}
\delta_{\pm}(P,\sigma(t))=\sigma(\delta_{\pm}(P,t))\text{ for all }%
t\in\mathbb{T}^{\ast}\text{.} \label{sigma delta1}%
\end{equation}

\end{corollary}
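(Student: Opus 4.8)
The plan is to exploit the fact that, once the time scale is periodic in shifts with period $P$, the map $\delta_{+}(P,\cdot)$ is a strictly increasing bijection of $\mathbb{T}^{\ast}$ onto itself, that is, an order isomorphism, and that such a map must carry the forward jump operator to itself. First I would record the ingredients coming from Definition \ref{def3.1} and Definition \ref{def3.2}: by periodicity in shifts, $(P,t)\in\mathcal{D}_{\mp}$ for every $t\in\mathbb{T}^{\ast}$, so both $\delta_{+}(P,\cdot)$ and $\delta_{-}(P,\cdot)$ are defined on all of $\mathbb{T}^{\ast}$; property (1) of Definition \ref{def3.1} says each is strictly increasing in its second argument; and property (4), equivalently Lemma \ref{lem3.1}(3), gives $\delta_{-}(P,\delta_{+}(P,t))=t$ and $\delta_{+}(P,\delta_{-}(P,t))=t$, so the two maps are mutually inverse. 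Hence $\delta_{+}(P,\cdot)$ is an order-preserving bijection of $\mathbb{T}^{\ast}$. I will prove the statement for $\delta_{+}$; the argument for $\delta_{-}$ is identical with the roles of the two shifts interchanged.

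Writing $u:=\delta_{+}(P,t)$ and $v:=\delta_{+}(P,\sigma(t))$, the goal is $v=\sigma(u)$. Since $\sigma(t)\geq t$ and $\delta_{+}(P,\cdot)$ is increasing, $v\geq u$, and by injectivity $v=u$ exactly when $\sigma(t)=t$, which splits the proof into two cases. If $t$ is right scattered, then $\sigma(t)$ is the least element of $\mathbb{T}$ exceeding $t$, and I would show $v$ is the least element exceeding $u$ by contradiction: assuming some $r\in\mathbb{T}^{\ast}$ satisfies $u<r<v$ and applying the increasing inverse $\delta_{-}(P,\cdot)$ together with property (4) yields $t=\delta_{-}(P,u)<\delta_{-}(P,r)<\delta_{-}(P,v)=\sigma(t)$, so $\delta_{-}(P,r)$ is a point of $\mathbb{T}$ strictly between $t$ and $\sigma(t)$, contradicting the definition of $\sigma(t)$; hence $\sigma(u)=v$. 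If instead $t$ is right dense, then $v=u$, and it remains only to verify that the shift preserves right density, which follows because the increasing bijection $\delta_{+}(P,\cdot)$ carries points of $\mathbb{T}^{\ast}$ accumulating at $t$ from above to points accumulating at $u$ from above, giving $\sigma(u)=u=v$.

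The main obstacle I anticipate is the mismatch between the set $\mathbb{T}$ on which $\sigma$ is computed and the set $\mathbb{T}^{\ast}$ on which the shift operators act: the contradiction step only produces a point of $\mathbb{T}^{\ast}$ lying between $u$ and $v$, so one must additionally rule out points of $\mathbb{T}\setminus\mathbb{T}^{\ast}$ in the open interval $(u,v)$ and confirm that $\sigma(t)\in\mathbb{T}^{\ast}$, so that $\delta_{+}(P,\sigma(t))$ is meaningful in the first place. This is where the structure of $\mathbb{T}^{\ast}$ must be invoked: the exceptional points in $\mathbb{T}\setminus\mathbb{T}^{\ast}$ are the extremal points of the time scale, such as $0$ in $\overline{q^{\mathbb{Z}}}$, which cannot sit strictly between two shifted points $u,v\in\mathbb{T}^{\ast}$. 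Making this precise, together with the verification that right density is preserved, is the only delicate part; the remaining monotonicity and inverse manipulations are routine consequences of Definition \ref{def3.1} and Lemma \ref{lem3.1}.
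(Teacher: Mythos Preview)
The paper does not supply its own proof of this corollary: the entire block of preliminaries on shift operators, including this statement, is imported from the references \cite{[8]}--\cite{[11]} (the result itself is from \cite{[8]}), and the corollary is stated without argument. So there is no in-paper proof to compare against; I can only assess your proposal on its merits.

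Your strategy is the standard and correct one. The periodicity hypothesis makes $\delta_{+}(P,\cdot)$ and $\delta_{-}(P,\cdot)$ mutually inverse, strictly increasing self-maps of $\mathbb{T}^{\ast}$, hence order isomorphisms, and the forward jump operator is an order-theoretic construct, so it must be preserved. Your case split into right-scattered and right-dense points, together with the contradiction obtained by pushing a hypothetical intermediate point back through $\delta_{-}(P,\cdot)$, is exactly how one executes this.

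You have also put your finger on the only real issue, and it is worth stressing that this is a gap in the \emph{paper's} axiomatics rather than in your reasoning. Definition~\ref{def3.1} asks only that $\mathbb{T}^{\ast}$ be a nonempty subset of $\mathbb{T}$ containing $t_{0}$; nothing in this paper forces $\mathbb{T}\setminus\mathbb{T}^{\ast}$ to consist of accumulation points, nor forces $\sigma(t)\in\mathbb{T}^{\ast}$ whenever $t\in\mathbb{T}^{\ast}$. Both facts are needed for the corollary even to be stated, and both hold in every example the authors give (where $\mathbb{T}\setminus\mathbb{T}^{\ast}$ is a set of endpoints such as $\{0\}$ or $\{0,1\}$). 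The original source \cite{[8]} imposes the additional structure that makes this work; within the present paper you simply have to take it as an implicit standing assumption. Once that is granted, your argument is complete.
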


\begin{example}
The time scale $\widetilde{\mathbb{T}}=(-\infty,0]\cup\lbrack1,\infty)$ cannot
be periodic in shifts $\delta_{\pm}$. Because if there was a $p\in
(t_{0},\infty)_{\widetilde{\mathbb{T}}^{\ast}}$ such that $\delta_{\pm
}(p,t)\in\widetilde{\mathbb{T}}^{\ast}$, then the point $\delta_{-}(p,0)$
would be right scattered due to (\ref{sigma delta1}). However, we have
$\delta_{-}(p,0)<0$ by (i) of Definition \ref{def3.1}. This leads to a
contradiction since every point less than $0$ is right dense.
\end{example}

\begin{definition}
[Periodic function in shifts $\delta_{\pm}$]\label{def3.3} Let $\mathbb{T}$ be
a time scale that is $P$-periodic in shifts $\delta_{\pm}$. We say that a real
valued function $f$ defined on $\mathbb{T}^{\ast}$ is periodic in shifts
$\delta_{\pm}$ if there exists a $T\in\left[  P,\infty\right)  _{\mathbb{T}%
^{\ast}}$ such that%
\begin{equation}
\left(  T,t\right)  \in\mathcal{D}_{\pm}\text{ and }f\left(  \delta_{\pm}%
^{T}\left(  t\right)  \right)  =f\left(  t\right)  \text{ for all }%
t\in\mathbb{T}^{\ast}, \label{3.1}%
\end{equation}
where $\delta_{\pm}^{T}\left(  t\right)  =\delta_{\pm}\left(  T,t\right)  $.
The number $T$ is called the period of $f,$ if it is the smallest number
satisfying (\ref{3.1}).
\end{definition}

\begin{example}
Let $\mathbb{T=R}$ with initial point $t_{0}=1,$ the function%
\[
f\left(  t\right)  =\sin\left(  \frac{\ln\left\vert t\right\vert }{\ln\left(
1/2\right)  }\pi\right)  ,\text{ }t\in\mathbb{R}^{\ast}:=\mathbb{R-}\left\{
0\right\}
\]
is four-periodic in shifts $\delta_{\pm}$ since%
\begin{align*}
f\left(  \delta_{\pm}\left(  4,t\right)  \right)   &  =\left\{
\begin{array}
[c]{c}%
f\left(  t4^{\pm1}\right)  \text{ if }t\geq0\\
f\left(  t/4^{\pm1}\right)  \text{ if }t<0
\end{array}
\right. \\
&  =\sin\left(  \frac{\ln\left\vert t\right\vert \pm2\ln\left(  1/2\right)
}{\ln\left(  1/2\right)  }\pi\right) \\
&  =\sin\left(  \frac{\ln\left\vert t\right\vert }{\ln\left(  1/2\right)  }%
\pi\pm2\pi\right) \\
&  =\sin\left(  \frac{\ln\left\vert t\right\vert }{\ln\left(  1/2\right)  }%
\pi\right) \\
&  =f\left(  t\right)  .
\end{align*}

\end{example}

\begin{definition}
[$\Delta$-periodic function in shifts $\delta_{\pm}$]\label{def3.4} Let
$\mathbb{T}$ be a time scale $P$-periodic in shifts. A real valued function
$f$ defined on $\mathbb{T}^{\ast}$ is $\Delta$-periodic function in shifts if
there exists a $T\in\left[  P,\infty\right)  _{\mathbb{T}^{\ast}}$ such that%
\begin{equation}
\left(  T,t\right)  \in\mathcal{D}_{\pm}\text{ for all }t\in\mathbb{T}^{\ast}
\label{3.2}%
\end{equation}%
\begin{equation}
\text{the shifts }\delta_{\pm}^{T}\text{ are }\Delta\text{-differentiable with
rd-continuous derivatives} \label{3.3}%
\end{equation}
and%
\begin{equation}
f\left(  \delta_{\pm}^{T}\left(  t\right)  \right)  \delta_{\pm}^{\Delta
T}\left(  t\right)  =f\left(  t\right)  \label{3.4}%
\end{equation}
for all $t\in\mathbb{T}^{\ast},$ where $\delta_{\pm}^{T}\left(  t\right)
=\delta_{\pm}\left(  T,t\right)  $. The smallest number $T$ satisfying
(\ref{3.2}-\ref{3.4}) is called period of $f$.

\begin{example}
The function $f\left(  t\right)  =1/t$ is $\Delta$-periodic function on
$q^{\mathbb{Z}}$ with the period $T=q$.
\end{example}
\end{definition}

The following result is useful for integration of functions which are $\Delta
$-periodic in shifts.

\begin{theorem}
Let $\mathbb{T}$ be a time scale that is periodic in shifts $\delta_{\pm}$
with period $P\in(t_{0},\infty)_{\mathbb{T}^{\ast}}$ and $f$ a $\Delta
$-periodic function in shifts $\delta_{\pm}$ with the period $T\in\left[
P,\infty\right)  _{\mathbb{T}^{\ast}}.$ Suppose that $f\in C_{rd}%
(\mathbb{T}),$ then%
\[%
%TCIMACRO{\dint \limits_{t_{0}}^{t}}%
%BeginExpansion
{\displaystyle\int\limits_{t_{0}}^{t}}
%EndExpansion
f(s)\Delta s=%
%TCIMACRO{\dint \limits_{\delta_{\pm}^{T}(t_{0})}^{\delta_{\pm}^{T}(t)}}%
%BeginExpansion
{\displaystyle\int\limits_{\delta_{\pm}^{T}(t_{0})}^{\delta_{\pm}^{T}(t)}}
%EndExpansion
f(s)\Delta s.
\]

\end{theorem}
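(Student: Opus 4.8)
The plan is to reduce the identity to the substitution rule for $\Delta$-integrals (see \cite[Theorem 1.98]{[1]}), using the defining relation \eqref{3.4} to absorb the Jacobian factor $\delta_{\pm}^{\Delta T}$. First I would set $\nu:=\delta_{\pm}^{T}$, i.e. $\nu(s)=\delta_{\pm}(T,s)$, and record the facts that make the substitution rule applicable: $\nu$ is strictly increasing in its second argument by part (1) of Definition \ref{def3.1}; it is $\Delta$-differentiable with rd-continuous derivative by \eqref{3.3}; and $(T,s)\in\mathcal{D}_{\pm}$ for every $s\in\mathbb{T}^{\ast}$ by \eqref{3.2}, so that $\nu$ carries $\mathbb{T}^{\ast}$ into $\mathbb{T}^{\ast}$ and the shifted endpoints $\delta_{\pm}(t_0)$ and $\delta_{\pm}(t)$ are well defined. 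Since $f\in C_{rd}(\mathbb{T})$, the integrand $F:=f\circ\nu$ is rd-continuous, which is the last hypothesis of the substitution theorem.

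Before applying the rule I would verify that the image time scale $\widetilde{\mathbb{T}}:=\nu(\mathbb{T})$ coincides with $\mathbb{T}$ on the relevant part, so that the integral it produces is again an ordinary $\Delta$-integral on $\mathbb{T}$. The shift $\nu$ is a bijection of $\mathbb{T}^{\ast}$ with inverse $\delta_{\mp}^{T}$ by part (4) of Definition \ref{def3.1}, and by Corollary \ref{Cor 1} it commutes with the forward jump operator, $\nu(\sigma(s))=\sigma(\nu(s))$. Hence $\nu$ preserves the point set together with the right-scattered/right-dense character of each point, so $\widetilde{\Delta}=\Delta$ and the two integral operators agree. With this in hand I would run the computation in two steps: invoking \eqref{3.4} in the form $f\bigl(\delta_{\pm}^{T}(s)\bigr)\delta_{\pm}^{\Delta T}(s)=f(s)$ and then the substitution rule with integrand $F=f\circ\nu$,
\[
\int_{t_{0}}^{t}f(s)\,\Delta s=\int_{t_{0}}^{t}f\bigl(\delta_{\pm}^{T}(s)\bigr)\delta_{\pm}^{\Delta T}(s)\,\Delta s=\int_{\delta_{\pm}(t_{0})}^{\delta_{\pm}(t)}\bigl(F\circ\nu^{-1}\bigr)(s)\,\widetilde{\Delta}s=\int_{\delta_{\pm}(t_{0})}^{\delta_{\pm}(t)}f(s)\,\Delta s,
\]
where the last equality uses $F\circ\nu^{-1}=f$ and $\widetilde{\Delta}=\Delta$.

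I expect the only delicate point to be the identification $\widetilde{\mathbb{T}}=\mathbb{T}$ that legitimizes replacing $\widetilde{\Delta}$ by $\Delta$; everything else is a direct insertion of the periodicity relation \eqref{3.4} followed by the classical change of variables. The bijectivity from Definition \ref{def3.1}(4) and the intertwining with $\sigma$ from Corollary \ref{Cor 1} are precisely what guarantees that passing to the image time scale does not alter the structure underlying the $\Delta$-integral, and once this is secured the statement follows for both choices of sign simultaneously, since parts (1)--(4) of Definition \ref{def3.1} and \eqref{3.2}--\eqref{3.4} are stated uniformly for $\delta_{+}$ and $\delta_{-}$.
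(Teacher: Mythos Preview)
The paper does not prove this theorem; it is quoted from \cite{[8]} as a preliminary result, so there is no in-paper proof to compare against. Your argument is correct and is exactly the natural route: invoke the $\Delta$-periodicity identity \eqref{3.4} to produce the Jacobian factor $\delta_{\pm}^{\Delta T}$, then apply the time-scale substitution rule \cite[Theorem~1.98]{[1]} with $\nu=\delta_{\pm}^{T}$, and finally identify the image time scale with $\mathbb{T}$ via the bijectivity from Definition~\ref{def3.1}(4) and the commutation $\nu\circ\sigma=\sigma\circ\nu$ from Corollary~\ref{Cor 1}. This is almost certainly the argument intended in \cite{[8]}, and your identification of the step $\widetilde{\mathbb{T}}=\mathbb{T}$ (hence $\widetilde{\Delta}=\Delta$) as the only point requiring care is apt.
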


For more examples of periodic time scales, periodic functions and $\Delta
$-periodic functions in shifts, we may direct readers to \cite{[8]}.

\section{Floquet theory based on new periodicity concept}

In this section we use Lyapunov transformation and construct a unified Floquet
theory based on new periodicity concept to give necessary and sufficient
conditions for existence of periodic solutions of homogenous and
nonhomogeneous dynamic equations on time scales.

Hereafter, we suppose that $\mathbb{T}$ is a periodic time scale in shifts
$\delta_{\pm}$ and that the shift operators $\delta_{\pm}$ are $\Delta
$-differentiable with $rd$-continuous derivatives. For brevity, we use the
term "periodic in shifts" to mean periodicity in shifts $\delta_{\pm}$.
Throughout the paper, we use the notation $\delta_{\pm}^{T}\left(  t\right)  $
to indicate the shifts $\delta_{\pm}\left(  T,t\right)  $. Furthermore, we
denote by $\delta_{\pm}^{\left(  k\right)  }\left(  T,t\right)  $,
$k\in\mathbb{N}$, the $k$-times composition of shifts of $\delta_{\pm}^{T}$
with itself, namely,%
\[
\delta_{\pm}^{\left(  k\right)  }\left(  T,t\right)  :=\underset
{k-times}{\underbrace{\delta_{\pm}^{T}\circ\delta_{\pm}^{T}\circ...\circ
\delta_{\pm}^{T}}}\left(  t\right)  .
\]
Observe that, the period of a function $f$ does not have to be equal to period
of the time scale on which $f$ is determined. However, for simplicity of our
results we set the period of time scale $\mathbb{T}$ to be equal to period of
the all functions defined on $\mathbb{T}$.

\begin{definition}
\label{def4.1}\cite[Definition 2.1]{[3]}A Lyapunov transformation is an
invertible matrix $L\left(  t\right)  \in C_{rd}^{1}\left(  \mathbb{T}%
,\mathbb{R}^{n\times n}\right)  $ satisfying%
\[
\left\Vert L\left(  t\right)  \right\Vert \leq\rho\text{ and }\left\vert \det
L\left(  t\right)  \right\vert \geq\eta\text{ for all }t\in\mathbb{T}%
\]
where $\rho$ and $\eta$ are arbitrary positive reals.
\end{definition}

\subsection{Homogenous Case}

In this section we consider the regressive time varying linear dynamic initial
value problem%
\begin{equation}
x^{\Delta}\left(  t\right)  =A\left(  t\right)  x\left(  t\right)  ,\text{
}x\left(  t_{0}\right)  =x_{0}, \label{4.1}%
\end{equation}
where $A:\mathbb{T}^{\ast}\mathbb{\rightarrow R}^{n\times n}$ is $\Delta
$-periodic in shifts with period $T$. Note that if the time scale is
additive periodic, then $\delta_{\pm}^{\Delta}\left(  T,t\right)  =1$ and
$\Delta$-periodicity in shifts becomes the same as the periodicity in shifts.
Hence, the homogeneous system we consider in this section is more general than
that of \cite{[3]} and \cite{[4]}.

In \cite{[17]}, the solution of the system (\ref{4.1}) (for an arbitrary
matrix $A$) is expressed by the equality
\[
x\left(  t\right)  =\Phi_{A}\left(  t,t_{0}\right)  x_{0}\text{,}%
\]
where $\Phi_{A}\left(  t,t_{0}\right)  $, called the transition matrix for the
system (\ref{4.1}), is given by%
\begin{align}
\Phi_{A}\left(  t,t_{0}\right)   &  =I+%
%TCIMACRO{\dint \limits_{t_{0}}^{t}}%
%BeginExpansion
{\displaystyle\int\limits_{t_{0}}^{t}}
%EndExpansion
A\left(  \tau_{1}\right)  \Delta\tau_{1}+%
%TCIMACRO{\dint \limits_{t_{0}}^{t}}%
%BeginExpansion
{\displaystyle\int\limits_{t_{0}}^{t}}
%EndExpansion
A\left(  \tau_{1}\right)
%TCIMACRO{\dint \limits_{t_{0}}^{\tau_{1}}}%
%BeginExpansion
{\displaystyle\int\limits_{t_{0}}^{\tau_{1}}}
%EndExpansion
A\left(  \tau_{2}\right)  \Delta\tau_{2}\Delta\tau_{1}+\ldots\nonumber\\
&  +%
%TCIMACRO{\dint \limits_{t_{0}}^{t}}%
%BeginExpansion
{\displaystyle\int\limits_{t_{0}}^{t}}
%EndExpansion
A\left(  \tau_{1}\right)
%TCIMACRO{\dint \limits_{t_{0}}^{\tau_{1}}}%
%BeginExpansion
{\displaystyle\int\limits_{t_{0}}^{\tau_{1}}}
%EndExpansion
A\left(  \tau_{2}\right)  \ldots%
%TCIMACRO{\dint \limits_{t_{0}}^{\tau_{i-1}}}%
%BeginExpansion
{\displaystyle\int\limits_{t_{0}}^{\tau_{i-1}}}
%EndExpansion
A\left(  \tau_{i}\right)  \Delta\tau_{i}\ldots\Delta\tau_{1}+\ldots\text{.}
\label{re4.3}%
\end{align}
As mentioned in \cite{[3]} the matrix exponential $e_{A}\left(  t,t_{0}%
\right)  $ is not always identical to $\Phi_{A}\left(  t,t_{0}\right)  $ since%
\[
A\left(  t\right)  e_{A}\left(  t,t_{0}\right)  =e_{A}\left(  t,t_{0}\right)
A\left(  t\right)
\]
is always true but the equality
\[
A\left(  t\right)  \Phi_{A}\left(  t,t_{0}\right)  =\Phi_{A}\left(
t,t_{0}\right)  A\left(  t\right)
\]
is not. It can be seen from (\ref{4.3}) that one has $e_{A}\left(
t,t_{0}\right)  \equiv\Phi_{A}\left(  t,t_{0}\right)  $ only if the matrix $A$
satisfies%
\[
A\left(  t\right)
%TCIMACRO{\dint \limits_{s}^{t}}%
%BeginExpansion
{\displaystyle\int\limits_{s}^{t}}
%EndExpansion
A\left(  \tau\right)  \Delta\tau=%
%TCIMACRO{\dint \limits_{s}^{t}}%
%BeginExpansion
{\displaystyle\int\limits_{s}^{t}}
%EndExpansion
A\left(  \tau\right)  \Delta\tau A\left(  t\right)  .
\]

In preparation for the next result we define the set
\begin{equation}
P\left(  t_{0}\right)  :=\left\{  \delta_{+}^{\left(  k\right)  }\left(
T,t_{0}\right)  ,\text{ }k=0,1,2,\ldots\right\}  \label{P(t)}%
\end{equation}
and the function
\begin{equation}
\Theta\left(  t\right)  :=%
%TCIMACRO{\dsum \limits_{j=1}^{m\left(  t\right)  }}%
%BeginExpansion
{\displaystyle\sum\limits_{j=1}^{m\left(  t\right)  }}
%EndExpansion
\delta_{-}\left(  \delta_{+}^{\left(  j-1\right)  }\left(  T,t_{0}\right)
,\delta_{+}^{\left(  j\right)  }\left(  T,t_{0}\right)  \right)  +G\left(
t\right)  , \label{4.1.1}%
\end{equation}
where
\begin{equation}
m\left(  t\right)  :=\min\left\{  k\in\mathbb{N}:\delta_{+}^{\left(  k\right)
}\left(  T,t_{0}\right)  \geq t\right\}  \label{m(t)}%
\end{equation}
and%
\begin{equation}
G\left(  t\right)  :=\left\{
\begin{array}
[c]{ll}%
0 & \text{if }t\in P\left(  t_{0}\right) \\
-\delta_{-}\left(  t,\delta_{+}^{\left(  m(t)\right)  }\left(  T,t_{0}\right)
\right)  & \text{if }t\notin P\left(  t_{0}\right)
\end{array}
\right.  . \label{G(t)}%
\end{equation}

\begin{remark}
For an additive periodic time scale we always have $\Theta\left(  t\right)
=t-t_{0}$.
\end{remark}

For the construction of matrix $R$, a solution of the matrix exponential
equation, it is necessary to define the real power of a matrix.

\begin{definition}
[Real power of a matrix]\label{def real power}\cite[Definition A.5]{[3]}Given
an $n\times n$ nonsingular matrix $M$ with elementary divisors $\left\{
\left(  \lambda-\lambda_{i}\right)  ^{m_{i}}\right\}  _{i=1}^{k}$ and any
$r\in\mathbb{R}$, the real power of the matrix $M$ is given by%
\begin{equation}
M^{r}:=\sum_{i=1}^{k}P_{i}\left(  M\right)  \lambda_{i}^{r}\left[  \sum
_{j=0}^{m_{i}-1}\frac{\Gamma\left(  r+1\right)  }{j!\Gamma\left(
r-j+1\right)  }\left(  \frac{M-\lambda_{i}I}{\lambda_{i}}\right)  ^{j}\right]
, \label{real power of matrix}%
\end{equation}
where%
\[
P_{i}\left(  \lambda\right)  :=a_{i}\left(  \lambda\right)  b_{i}\left(
\lambda\right)  ,
\]%
\[
b_{i}\left(  \lambda\right)  :=\Pi_{\substack{j\neq i\\j=1}}^{k}\left(
\lambda-\lambda_{j}\right)  ,
\]%
\[
\frac{1}{p\left(  \lambda\right)  }=\sum_{i=1}^{k}\frac{a_{i}\left(
\lambda\right)  }{\left(  \lambda-\lambda_{i}\right)  ^{m_{i}}},
\]
and $p\left(  \lambda\right)  $ is the characteristic polynomial of $M$.
\end{definition}

It has been deduced by \cite[Proposition A.3]{[3]} that the set $\left\{
P_{i}\left(  M\right)  \right\}  _{i=1}^{k}$ is orthogonal. That is, for any
$r,s\in\mathbb{R}$ we have $M^{s+r}=M^{s}M^{r}$.

In the following theorem we construct the matrix $R$ as a solution of matrix
exponential equation.

\begin{theorem}
\label{thm4.1} For a nonsingular, $n\times n$ constant matrix $M$ a solution
$R:\mathbb{T\rightarrow C}^{n\times n}$ of matrix exponential equation
\[
e_{R}\left(  \delta_{+}^{T}\left(  t_{0}\right)  ,t_{0}\right)  =M
\]
can be given by%
\begin{equation}
R\left(  t\right)  =\lim_{s\rightarrow t}\frac{M^{\frac{1}{T}\left[
\Theta\left(  \sigma\left(  t\right)  \right)  -\Theta\left(  s\right)
\right]  }-I}{\sigma\left(  t\right)  -s}, \label{4.1.2}%
\end{equation}
where $I$ is the $n\times n$ identity matrix and $\Theta$ is as in
(\ref{4.1.1}).
\end{theorem}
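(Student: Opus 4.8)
The plan is to produce the matrix exponential $e_R(\cdot,t_0)$ in closed form and then read off its value at the first shifted node $\delta_+^T(t_0)$. Set
\[
Y(t):=M^{\Theta(t)/T},
\]
the real power being that of Definition \ref{def real power}. Because each $M^r$ is, by \eqref{real power of matrix}, a polynomial in $M$, all the matrices $Y(t)$ (and the $R(t)$ below) are functions of the single matrix $M$ and therefore commute; this reduces the matrix bookkeeping to a scalar-type computation. First I would evaluate $\Theta$ at the two relevant points. At $t_0$ one has $m(t_0)=0$ since $\delta_+^{(0)}(T,t_0)=t_0\ge t_0$, so the sum in \eqref{4.1.1} is empty, and $t_0\in P(t_0)$ forces $G(t_0)=0$; hence $\Theta(t_0)=0$ and $Y(t_0)=M^{0}=I$. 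At the first node, Definition \ref{def3.1}(3) gives $\delta_+^T(t_0)=\delta_+(T,t_0)=T$, whence $m(T)=1$, $T\in P(t_0)$, $G(T)=0$, and \eqref{4.1.1} reduces to the single summand $\delta_-(\delta_+^{(0)}(T,t_0),\delta_+^{(1)}(T,t_0))=\delta_-(t_0,T)=T$ by Lemma \ref{lem3.1}(2). Thus $\Theta(\delta_+^T(t_0))=T$ and $Y(\delta_+^T(t_0))=M^{T/T}=M$.

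The next step is to show that $Y$ solves $Y^\Delta=RY$ with $R$ as in \eqref{4.1.2}. Using the power law $M^{a+b}=M^{a}M^{b}$ (the orthogonality of $\{P_i(M)\}_{i=1}^{k}$ recorded after Definition \ref{def real power}), I factor
\[
M^{\Theta(\sigma(t))/T}-M^{\Theta(s)/T}=M^{\Theta(s)/T}\bigl(M^{[\Theta(\sigma(t))-\Theta(s)]/T}-I\bigr),
\]
so that the delta-derivative of $Y$ splits as a product of limits,
\[
Y^{\Delta}(t)=\Bigl(\lim_{s\to t}M^{\Theta(s)/T}\Bigr)\Bigl(\lim_{s\to t}\frac{M^{[\Theta(\sigma(t))-\Theta(s)]/T}-I}{\sigma(t)-s}\Bigr)=Y(t)R(t)=R(t)Y(t),
\]
the final equality holding because $Y(t)$ and $R(t)$ are both functions of $M$. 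Here the first limit equals $Y(t)$ once $\Theta$ is continuous at $t$ and $r\mapsto M^{r}$ is continuous, the latter being immediate from \eqref{real power of matrix} since $M$ nonsingular makes every $\lambda_i^{r}$ continuous in $r$; the second limit is $R(t)$ by definition. With $Y^{\Delta}=RY$ and $Y(t_0)=I$ established, uniqueness of the matrix exponential (Definition \ref{def2.2}) yields $Y(t)=e_R(t,t_0)$, and evaluating at $t=\delta_+^T(t_0)$ gives $e_R(\delta_+^T(t_0),t_0)=M$.

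Before appealing to Definition \ref{def2.2} I must confirm that $R\in\mathcal R$, so that $e_R$ exists. At a right-scattered $t$ the quotient in \eqref{4.1.2} reduces to $R(t)=\mu(t)^{-1}\bigl(M^{[\Theta(\sigma(t))-\Theta(t)]/T}-I\bigr)$, hence $I+\mu(t)R(t)=M^{[\Theta(\sigma(t))-\Theta(t)]/T}$, which is invertible because $M$ is nonsingular; at right-dense $t$ one has $I+\mu(t)R(t)=I$. Thus $R$ is regressive, and its $rd$-continuity follows again from continuity of $\Theta$ and of the real power.

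The step I expect to be the main obstacle is the analytic control of $\Theta$: that the limit in \eqref{4.1.2} exists (equivalently, that $Y$ is delta-differentiable) and that $\Theta$ is continuous, in particular across the nodes $\delta_+^{(k)}(T,t_0)\in P(t_0)$ at which the counter $m(t)$ jumps and the correction $G(t)$ of \eqref{G(t)} switches on. I would argue node by node: on each stretch between consecutive nodes $m(t)$ is constant, so $\Theta(t)$ equals a constant plus $-\delta_-(t,\delta_+^{(m(t))}(T,t_0))$ and is continuous since the shift $\delta_-$ is; and as $t\to\delta_+^{(k)}(T,t_0)^{+}$ the extra summand $\delta_-(\delta_+^{(k)}(T,t_0),\delta_+^{(k+1)}(T,t_0))$ gained when $m$ increases is cancelled exactly by $-\delta_-(t,\delta_+^{(k+1)}(T,t_0))$ tending to $-\delta_-(\delta_+^{(k)}(T,t_0),\delta_+^{(k+1)}(T,t_0))$, so that $\Theta$ matches its nodal value. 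This cancellation is precisely the reason $G$ is defined as in \eqref{G(t)}, and carrying it out, together with the delta-differentiability of $\Theta$ at right-dense points inherited from the standing smoothness of the shift operators (transported to $Y$ by the chain rule for $r\mapsto M^{r}$), is the technical heart of the argument.
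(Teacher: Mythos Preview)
Your proposal follows essentially the same approach as the paper: define the candidate $Y(t)=M^{\Theta(t)/T}$, verify $Y(t_0)=I$ and $Y^{\Delta}=RY$ via the power law $M^{a+b}=M^{a}M^{b}$, conclude $Y=e_{R}(\cdot,t_{0})$ by uniqueness, and then read off $\Theta(\delta_{+}^{T}(t_{0}))=T$. The paper proceeds by splitting into right-scattered and right-dense cases (factoring out $M^{\Theta(t)/T}$ rather than $M^{\Theta(s)/T}$) and does not explicitly check regressivity of $R$ or continuity of $\Theta$ across the nodes of $P(t_{0})$ as you do, but the substance is identical.
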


\begin{proof}
Let's construct the matrix exponential function $e_{R}\left(  t,t_{0}\right)
$ as follows%
\begin{equation}
e_{R}\left(  t,t_{0}\right)  :=M^{\frac{1}{T}\Theta\left(  t\right)  }\text{
for }t\geq t_{0}\text{,} \label{4.3}%
\end{equation}
where $\Theta$ is given by (\ref{4.1.1}) and real power of a nonsingular
matrix $M$ is given by (\ref{real power of matrix}). To show that the function
$e_{R}\left(  t,t_{0}\right)  $ constructed in (\ref{4.3}) is the matrix
exponential we first observe that%
\[
e_{R}\left(  t_{0},t_{0}\right)  =M^{\frac{1}{T}\Theta\left(  t_{0}\right)
}=I,
\]
where we use (\ref{4.3}) along with $\Theta\left(  t_{0}\right)  =G\left(
t_{0}\right)  =0$. Second, differentiating (\ref{4.3}) we obtain
\[
e_{R}^{\Delta}\left(  t,t_{0}\right)  =R\left(  t\right)  e_{R}\left(
t,t_{0}\right)  .
\]
To see this, first suppose that $t$ is right-scattered. Then, we have%
\begin{align*}
e_{R}^{\Delta}\left(  t,t_{0}\right)   &  =\frac{e_{R}\left(  \sigma\left(
t\right)  ,t_{0}\right)  -e_{R}\left(  t,t_{0}\right)  }{\sigma\left(
t\right)  -t}\\
&  =\frac{M^{\frac{1}{T}\Theta\left(  \sigma\left(  t\right)  \right)
}-M^{\frac{1}{T}\Theta\left(  t\right)  }}{\sigma\left(  t\right)  -t}\\
&  =\frac{M^{\frac{1}{T}[\Theta\left(  \sigma\left(  t\right)  \right)
-\Theta\left(  t\right)  ]}-I}{\sigma\left(  t\right)  -t}M^{\frac{1}{T}%
\Theta\left(  t\right)  }\\
&  =R\left(  t\right)  e_{R}\left(  t,t_{0}\right)  .
\end{align*}
If $t$ is right dense, then $\sigma\left(  t\right)  =t$. Setting $s=t+h$ in
(\ref{4.1.1}) and using (\ref{4.3}) we get%
\begin{align*}
e_{R}^{\Delta}\left(  t,t_{0}\right)   &  =\lim_{h\rightarrow0}\frac
{e_{R}\left(  t+h,t_{0}\right)  -e_{R}\left(  t,t_{0}\right)  }{h}\\
&  =\lim_{h\rightarrow0}\frac{M^{\frac{1}{T}\Theta\left(  t+h\right)
}-M^{\frac{1}{T}\Theta\left(  t\right)  }}{h}\\
&  =\lim_{h\rightarrow0}\frac{M^{\frac{1}{T}[\Theta\left(  t+h\right)
-\Theta\left(  t\right)  ]}-I}{h}M^{\frac{1}{T}\Theta\left(  t\right)  }\\
&  =R\left(  t\right)  e_{R}\left(  t,t_{0}\right)  .
\end{align*}
In any case, we have $e_{R}^{\Delta}\left(  t,t_{0}\right)  =R\left(
t\right)  e_{R}\left(  t,t_{0}\right)  $. Finally, it follows from Lemma
\ref{lem3.1} that%
\[
\Theta\left(  \delta_{+}^{T}\left(  t_{0}\right)  \right)  =\delta_{-}\left(
t_{0},\delta_{+}^{T}\left(  t_{0}\right)  \right)  =\delta_{+}^{T}\left(
t_{0}\right)  =T,
\]
and therefore,%
\[
e_{R}\left(  \delta_{+}^{T}\left(  t_{0}\right)  ,t_{0}\right)  =M^{\frac
{1}{T}\Theta\left(  \delta_{+}^{T}\left(  t_{0}\right)  \right)  }=M.
\]
The proof is complete.
\end{proof}

\begin{corollary}
\label{cor1}The matrices $R\left(  t\right)  $ and $M$ have identical eigenvectors.
\end{corollary}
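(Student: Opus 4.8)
The plan is to exploit the fact that, for each fixed $t$, the matrix $R\left(t\right)$ is a limit of matrices each of which is a polynomial in $M$; since any polynomial in $M$ leaves the eigenvectors of $M$ invariant, the same will be true of $R\left(t\right)$. First I would fix an eigenvector $v$ of $M$, say $Mv=\lambda v$ with $\lambda\neq0$ (recall $M$ is nonsingular), and examine how the real power $M^{r}$ of Definition \ref{def real power} acts on $v$. Writing $r:=\tfrac{1}{T}\left[\Theta\left(\sigma\left(t\right)\right)-\Theta\left(s\right)\right]$ and substituting $v$ into (\ref{real power of matrix}), I observe that $\left(M-\lambda I\right)v=0$ forces every term with $j\geq1$ in the inner sum to annihilate $v$, so only the $j=0$ contribution survives; moreover, since the $\left\{P_{i}\left(M\right)\right\}$ are the orthogonal spectral idempotents associated with the elementary divisors (as recorded just before Theorem \ref{thm4.1}), exactly one projection $P_{i_{0}}\left(M\right)$ with $\lambda_{i_{0}}=\lambda$ fixes $v$ while the rest kill it. This yields the clean identity $M^{r}v=\lambda^{r}v$.

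With this in hand I would feed the identity into the defining limit (\ref{4.1.2}). Because $M^{r}v=\lambda^{r}v$ for the relevant exponent $r=\tfrac{1}{T}\left[\Theta\left(\sigma\left(t\right)\right)-\Theta\left(s\right)\right]$, the whole difference quotient acts on $v$ as a scalar:
\[
\frac{M^{\frac{1}{T}\left[\Theta\left(\sigma\left(t\right)\right)-\Theta\left(s\right)\right]}-I}{\sigma\left(t\right)-s}\,v
=\frac{\lambda^{\frac{1}{T}\left[\Theta\left(\sigma\left(t\right)\right)-\Theta\left(s\right)\right]}-1}{\sigma\left(t\right)-s}\,v.
\]
Letting $s\rightarrow t$ and using that the scalar limit exists --- this is exactly the $\Delta$-derivative computation already carried out in the proof of Theorem \ref{thm4.1}, now performed with the matrix $M$ replaced by the eigenvalue $\lambda$ --- I obtain $R\left(t\right)v=r_{\lambda}\left(t\right)v$ for a scalar $r_{\lambda}\left(t\right)$. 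Hence every eigenvector of $M$ is an eigenvector of $R\left(t\right)$, which is the forward inclusion.

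For the reverse inclusion I would note that each $M^{r}$, being a finite sum of products of the $P_{i}\left(M\right)$ with powers of $M-\lambda_{i}I$, is a polynomial in $M$, so $R\left(t\right)$ commutes with $M$ and the two matrices are simultaneously reducible; away from the degenerate case in which a collision of the scalars $r_{\lambda_{i}}\left(t\right)$ artificially enlarges an eigenspace of $R\left(t\right)$, the eigenvectors then coincide. The step I expect to be the main obstacle is precisely this bookkeeping at the level of eigenspaces rather than individual vectors: one must confirm that the projection formula is being applied to \emph{genuine} eigenvectors (on which the nilpotent part $M-\lambda_{i}I$ vanishes, so the $j\geq1$ terms drop out) and that the scalar limit $r_{\lambda}\left(t\right)$ is well defined, the latter being guaranteed by the differentiability established in Theorem \ref{thm4.1}. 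The essential content for the ensuing Floquet decomposition is the forward implication $Mv=\lambda v\Rightarrow R\left(t\right)v=r_{\lambda}\left(t\right)v$, which the argument above delivers directly.
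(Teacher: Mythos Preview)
Your approach matches the paper's: show $M^{r}v=\lambda^{r}v$ for an eigenvector $v$ of $M$, then pass to the limit in (\ref{4.1.2}) to obtain $R(t)v=\gamma(t)v$ with $\gamma(t)=\lim_{s\to t}\frac{\lambda^{\frac{1}{T}[\Theta(\sigma(t))-\Theta(s)]}-1}{\sigma(t)-s}$. The paper simply asserts $M^{r}v_{i}=\lambda_{i}^{r}v_{i}$ from $Mv_{i}=\lambda_{i}v_{i}$ without unpacking the projection formula, and---like your core argument---establishes only the forward inclusion, not the reverse one you additionally sketch.
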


\begin{proof}
For any eigenpairs $\{\lambda_{i},v_{i}\}$, $i=1,2,...,n$ of $M$, we get by
using $Mv_{i}=\lambda_{i}v_{i}$ that%
\[
\lim_{s\rightarrow t}M^{\frac{1}{T}[\Theta(\sigma(t))-\Theta(s)]}v_{i}%
=\lim_{s\rightarrow t}\lambda_{i}^{\frac{1}{T}[\Theta(\sigma(t))-\Theta
(s)]}v_{i}.
\]
This implies%
\begin{equation}
R(t)v_{i}=\lim_{s\rightarrow t}\left(  \frac{\lambda_{i}^{\frac{1}{T}%
[\Theta(\sigma(t))-\Theta(s)]}-1}{\sigma(t)-s}\right)  v_{i}. \label{2.0}%
\end{equation}
Substituting $\gamma_{i}(t)=\lim_{s\rightarrow t}\left(  \frac{\lambda
_{i}^{\frac{1}{T}[\Theta(\sigma(t))-\Theta(s)]}-1}{\sigma(t)-s}\right)  $ into
(\ref{2.0}) we conclude that $R(t)$ has the eigenpairs $\{\gamma_{i}%
(t),v_{i}\}_{i=1}^{n}$.
\end{proof}

\begin{lemma}
\label{lem4.2}Let $\mathbb{T}$ be a time scale and $P\in\mathcal{R}\left(
\mathbb{T}^{\ast},\mathbb{R}^{n\times n}\right)  $ be a $\Delta-$periodic
matrix valued function in shifts with period $T$, i.e.%
\[
P\left(  t\right)  =P\left(  \delta_{\pm}^{T}\left(  t\right)  \right)
\delta_{\pm}^{\Delta T}\left(  t\right)
\]
Then the solution of the dynamic matrix initial value problem%
\begin{equation}
Y^{\Delta}\left(  t\right)  =P\left(  t\right)  Y\left(  t\right)  ,\text{
}Y\left(  t_{0}\right)  =Y_{0}, \label{2}%
\end{equation}
is unique up to a period $T$ in shifts. That is
\begin{equation}
\Phi_{P}\left(  t,t_{0}\right)  =\Phi_{P}\left(  \delta_{+}^{T}\left(
t\right)  ,\delta_{+}^{T}\left(  t_{0}\right)  \right)  \label{2.1}%
\end{equation}
for all $t\in\mathbb{T}^{\ast}$.
\end{lemma}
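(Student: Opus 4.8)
The plan is to exploit uniqueness of solutions. Since $P\in\mathcal{R}$, the transition matrix $\Phi_{P}(\cdot,t_{0})$ is the unique solution of $Y^{\Delta}=P(t)Y$ with $Y(t_{0})=I$, so it suffices to prove that the right-hand side of (\ref{2.1}), regarded as a function of $t$, solves exactly the same initial value problem; the identity (\ref{2.1}) then follows from uniqueness, and the assertion about the general datum $Y_{0}$ is immediate because every solution of (\ref{2}) is $Y(t)=\Phi_{P}(t,t_{0})Y_{0}$. First I would set $\Psi(t):=\Phi_{P}(\delta_{+}^{T}(t),\delta_{+}^{T}(t_{0}))$ and $s_{0}:=\delta_{+}^{T}(t_{0})$, and check the initial condition: by the defining series (\ref{re4.3}) of the transition matrix, $\Psi(t_{0})=\Phi_{P}(s_{0},s_{0})=I$, which matches $\Phi_{P}(t_{0},t_{0})=I$.

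The heart of the argument is the computation of $\Psi^{\Delta}$. Here I would use that $\delta_{+}^{T}$ is strictly increasing (property (1) of Definition \ref{def3.1}), maps $\mathbb{T}^{\ast}$ into $\mathbb{T}^{\ast}$ (periodicity in shifts), is $\Delta$-differentiable with rd-continuous derivative (our standing assumption in this section), and, crucially, commutes with the forward jump operator, $\sigma(\delta_{+}^{T}(t))=\delta_{+}^{T}(\sigma(t))$, by Corollary \ref{Cor 1}. These are precisely the hypotheses of the time-scale chain rule (see \cite{[1]}): applying it to the composition of $\Phi_{P}(\cdot,s_{0})$ with $\delta_{+}^{T}$, and using that $w\mapsto\Phi_{P}(w,s_{0})$ satisfies $\Phi_{P}^{\Delta}(w,s_{0})=P(w)\Phi_{P}(w,s_{0})$, yields
\[
\Psi^{\Delta}(t)=P(\delta_{+}^{T}(t))\,\Phi_{P}(\delta_{+}^{T}(t),s_{0})\,\delta_{+}^{\Delta T}(t).
\]
Since $\delta_{+}^{\Delta T}(t)$ is a scalar it commutes with the matrices, and the defining relation of $\Delta$-periodicity, $P(t)=P(\delta_{+}^{T}(t))\delta_{+}^{\Delta T}(t)$, collapses the right-hand side to $P(t)\Phi_{P}(\delta_{+}^{T}(t),s_{0})=P(t)\Psi(t)$. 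Thus $\Psi$ solves $Y^{\Delta}=P(t)Y$, $Y(t_{0})=I$, and uniqueness gives $\Psi(t)=\Phi_{P}(t,t_{0})$, which is (\ref{2.1}).

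The main obstacle is the legitimacy of the chain-rule step, specifically that the $\Delta$-derivative of $\Phi_{P}(\cdot,s_{0})$ taken along $\mathbb{T}$ agrees with the derivative taken along the image time scale $\delta_{+}^{T}(\mathbb{T}^{\ast})$; this is exactly what Corollary \ref{Cor 1} secures, since the jump operator on the image is the restriction of $\sigma$, so the two derivatives coincide at the relevant points. Should one prefer to sidestep the general chain rule, I would instead verify $\Psi^{\Delta}(t)=P(t)\Psi(t)$ directly by the two-case computation used in the proof of Theorem \ref{thm4.1}: at a right-scattered $t$, write $\Psi^{\Delta}(t)=[\Phi_{P}(\sigma(\delta_{+}^{T}(t)),s_{0})-\Phi_{P}(\delta_{+}^{T}(t),s_{0})]/\mu(t)$ using (\ref{sigma delta1}), apply the jump formula $\Phi_{P}(\sigma(w),s_{0})=(I+\mu(w)P(w))\Phi_{P}(w,s_{0})$ at $w=\delta_{+}^{T}(t)$, and recognize that $\mu(\delta_{+}^{T}(t))/\mu(t)=\delta_{+}^{\Delta T}(t)$; at a right-dense $t$ the same identity follows from the corresponding difference-quotient limit. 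Either route terminates in the same application of $\Delta$-periodicity, and I expect the only delicate bookkeeping to be confirming that all pairs $(T,t)$, $(T,\sigma(t))$ lie in $\mathcal{D}_{+}$ so that the shifts above are defined, which is guaranteed by periodicity in shifts together with Lemma \ref{lem3.1}.
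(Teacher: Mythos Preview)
Your proposal is correct and follows essentially the same route as the paper's proof: both show that $\Phi_{P}(\delta_{+}^{T}(t),\delta_{+}^{T}(t_{0}))$ satisfies the same matrix IVP as $\Phi_{P}(t,t_{0})$ by applying the time-scale chain rule (the paper cites \cite[Theorem~1.93]{[1]}) together with the $\Delta$-periodicity relation, and then invoke uniqueness. Your treatment is in fact more careful than the paper's, since you explicitly justify the chain-rule hypotheses via Corollary~\ref{Cor 1} and sketch a direct two-case verification as a fallback.
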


\begin{proof}
By \cite[Theorem 3.2]{[17]}, the unique solution to (\ref{2}) is $Y\left(  t\right)
=\Phi_{P}\left(  t,t_{0}\right)  Y_{0}.$ Observe that%
\[
Y^{\Delta}\left(  t\right)  =\Phi_{P}^{\Delta}\left(  t,t_{0}\right)
Y_{0}=P\left(  t\right)  \Phi_{P}\left(  t,t_{0}\right)  Y_{0}%
\]
and%
\[
Y\left(  t_{0}\right)  =\Phi_{P}\left(  t_{0},t_{0}\right)  Y_{0}=Y_{0}.
\]
To verify (\ref{2.1}) we first need to show that $\Phi_{P}\left(  \delta
_{+}^{T}\left(  t\right)  ,\delta_{+}^{T}\left(  t_{0}\right)  \right)  Y_{0}$
is also solution for (\ref{2}). Since the shift operator $\delta_{+}$ is
strictly increasing, the chain rule (\cite[Theorem 1.93]{[1]}) yields%
\begin{align*}
\left[  \Phi_{P}\left(  \delta_{+}^{T}\left(  t\right)  ,\delta_{+}^{T}\left(
t_{0}\right)  \right)  Y_{0}\right]  ^{\Delta}  &  =P\left(  \delta_{\pm}%
^{T}\left(  t\right)  \right)  \delta_{\pm}^{\Delta T}\left(  t\right)
\Phi_{P}\left(  \delta_{+}^{T}\left(  t\right)  ,\delta_{+}^{T}\left(
t_{0}\right)  \right)  Y_{0}\\
&  =P\left(  t\right)  \Phi_{P}\left(  \delta_{+}^{T}\left(  t\right)
,\delta_{+}^{T}\left(  t_{0}\right)  \right)  Y_{0}.
\end{align*}
On the other hand, we have
\[
\Phi_{P}\left(  \delta_{+}^{T}\left(  t\right)  ,\delta_{+}^{T}\left(
t_{0}\right)  \right)  _{t=t_{0}}Y_{0}=\Phi_{P}\left(  \delta_{+}^{T}\left(
t_{0}\right)  ,\delta_{+}^{T}\left(  t_{0}\right)  \right)  Y_{0}=Y_{0}.
\]
This means $\Phi_{P}\left(  \delta_{+}^{T}\left(  t\right)  ,\delta_{+}%
^{T}\left(  t_{0}\right)  \right)  Y_{0}$ solves (\ref{2}). From the
uniqueness of solution of (\ref{2}), we get (\ref{2.1}).
\end{proof}

One may similarly prove the next result.

\begin{corollary}
Let $\mathbb{T}$ be a time scale and $P\in\mathcal{R}\left(  \mathbb{T}^{\ast
},\mathbb{R}^{n\times n}\right)  $ be a $\Delta-$periodic matrix valued
function in shifts, i.e.%
\[
P\left(  t\right)  =P\left(  \delta_{\pm}^{T}\left(  t\right)  \right)
\delta_{\pm}^{\Delta T}\left(  t\right)
\]
Then%
\begin{equation}
e_{P}\left(  t,t_{0}\right)  =e_{P}\left(  \delta_{+}^{T}\left(  t\right)
,\delta_{+}^{T}\left(  t_{0}\right)  \right)  . \label{2.2}%
\end{equation}

\end{corollary}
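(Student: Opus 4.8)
The plan is to imitate the proof of Lemma~\ref{lem4.2}, replacing the transition matrix $\Phi_P$ by the matrix exponential $e_P$. By Definition~\ref{def2.2}, $e_P(\cdot,t_0)$ is the \emph{unique} matrix solution of the initial value problem $Y^{\Delta}(t)=P(t)Y(t)$, $Y(t_0)=I$, so it suffices to exhibit the shifted object $W(t):=e_P(\delta_+^T(t),\delta_+^T(t_0))$ as a second solution of this same problem and then invoke uniqueness. The initial condition is immediate: evaluating at $t=t_0$ and using property~(1) of Theorem~\ref{thm2.1} (namely $e_P(s,s)\equiv I$) gives $W(t_0)=e_P(\delta_+^T(t_0),\delta_+^T(t_0))=I$, matching $e_P(t_0,t_0)=I$.

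Next I would differentiate $W$. Since the standing hypotheses of this section guarantee that $\delta_+^T$ is $\Delta$-differentiable with $rd$-continuous derivative and strictly increasing, the time-scale chain rule (\cite[Theorem 1.93]{[1]}, exactly as in Lemma~\ref{lem4.2}) applies to the composition in the first slot. Using the defining property $e_P^{\Delta}(s,s_0)=P(s)e_P(s,s_0)$ of the matrix exponential in its first argument, this yields
\[
W^{\Delta}(t)=\left[P(\delta_+^T(t))\,e_P(\delta_+^T(t),\delta_+^T(t_0))\right]\delta_+^{\Delta T}(t).
\]
Now the $\Delta$-periodicity hypothesis $P(t)=P(\delta_\pm^T(t))\,\delta_\pm^{\Delta T}(t)$ collapses the prefactor to $P(t)$, giving $W^{\Delta}(t)=P(t)W(t)$. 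Hence $W$ solves the same IVP as $e_P(\cdot,t_0)$, and uniqueness forces $W(t)=e_P(t,t_0)$, which is precisely~(\ref{2.2}).

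The only point demanding care—and hence the main obstacle—is the legitimacy of the chain-rule step together with the fact that $e_P$ is even defined at the shifted arguments: one must check that $\delta_+^T$ maps $\mathbb{T}^\ast$ into the range on which $e_P$ lives, and that the shifted coefficient $(P\circ\delta_+^T)\,\delta_+^{\Delta T}$ remains regressive and $rd$-continuous, so that the uniqueness theorem underlying Definition~\ref{def2.2} may legitimately be applied to $W$. Once this regressivity is confirmed—and it is inherited directly through the $\Delta$-periodicity relation, since that relation expresses $P(t)$ itself in terms of the shifted data—the argument is identical in structure to that of Lemma~\ref{lem4.2}, consistent with the author's remark that the result may be proved ``similarly.''
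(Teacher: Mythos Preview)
Your proposal is correct and follows exactly the approach the paper intends: the authors state that the corollary ``may similarly be proved'' to Lemma~\ref{lem4.2}, and your argument is precisely that similarity---show that $W(t)=e_P(\delta_+^T(t),\delta_+^T(t_0))$ solves the defining initial value problem $Y^\Delta=P(t)Y$, $Y(t_0)=I$ via the chain rule and the $\Delta$-periodicity relation, then invoke uniqueness from Definition~\ref{def2.2}. There is nothing to add.
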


\begin{theorem}
[Floquet decomposition]\label{thm1} Let $A$ be a matrix valued function that
is $\Delta$-periodic in shifts with period $T$. The transition matrix for $A$
can be given in the form%
\begin{equation}
\Phi_{A}\left(  t,\tau\right)  =L\left(  t\right)  e_{R}\left(  t,\tau\right)
L^{-1}\left(  \tau\right)  ,\text{ for all }t,\tau\in\mathbb{T}^{\ast},
\label{3}%
\end{equation}
where $R:\mathbb{T\rightarrow C}^{n\times n}$ is $\Delta$-periodic function in shifts and $L\left(  t\right)  \in
C_{rd}^{1}\left(  \mathbb{T}^{\ast},\mathbb{R}^{n\times n}\right)  $ is
periodic in shifts with the same period $T$.
\end{theorem}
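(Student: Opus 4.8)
The plan is to mimic the classical Floquet argument: produce a constant monodromy matrix from the transition matrix over one shift-period, realize it as a matrix exponential over that period via Theorem \ref{thm4.1}, and peel it off to leave a factor $L$ that is periodic in shifts. First I would record the monodromy identity. Since $A$ is $\Delta$-periodic in shifts, Lemma \ref{lem4.2} gives $\Phi_A(t,t_0)=\Phi_A(\delta_+^T(t),\delta_+^T(t_0))$. Combining this with the semigroup and inversion laws of the transition matrix (the analogues of Theorem \ref{thm2.1}(3)--(4)), written as $\Phi_A(\delta_+^T(t),t_0)=\Phi_A(\delta_+^T(t),\delta_+^T(t_0))\,\Phi_A(\delta_+^T(t_0),t_0)$, and setting the nonsingular monodromy matrix $M:=\Phi_A(\delta_+^T(t_0),t_0)$, I obtain
\[
\Phi_A(\delta_+^T(t),t_0)=\Phi_A(t,t_0)\,M .
\]

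Next I invoke Theorem \ref{thm4.1} to build $R$ with $e_R(\delta_+^T(t_0),t_0)=M$ and $e_R(t,t_0)=M^{\Theta(t)/T}$. The heart of the matter is the additivity identity $\Theta(\delta_+^T(t))=\Theta(t)+T$ for all $t\in\mathbb{T}^\ast$. To prove it I would first show that every incremental shift-distance in \eqref{4.1.1} equals $T$: writing $t_j:=\delta_+^{(j)}(T,t_0)$, the relation $t_j=\delta_+(T,t_{j-1})=\delta_+(t_{j-1},T)$ (commutativity, Lemma \ref{lem3.1}(5)) together with injectivity of $\delta_+(t_{j-1},\cdot)$ forces $\delta_-(t_{j-1},t_j)=T$, so the sum in \eqref{4.1.1} is exactly $m(t)\,T$. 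Since $\delta_+^T$ is strictly increasing and $\delta_-(T,t_j)=t_{j-1}$, one checks $m(\delta_+^T(t))=m(t)+1$, contributing one extra $T$; it then remains to see that the correction term is shift-invariant, $G(\delta_+^T(t))=G(t)$, which reduces to the distance-invariance $\delta_-(\delta_+^T(x),\delta_+^T(y))=\delta_-(x,y)$ extracted from the commutation axioms (Definition \ref{def3.1}(4)--(5)) and Lemma \ref{lem3.1}(9). Granting the $\Theta$ identity, exponent-additivity of real matrix powers ($M^{r+s}=M^rM^s$, noted after Definition \ref{def real power}) yields the monodromy law for the exponential, $e_R(\delta_+^T(t),t_0)=M^{(\Theta(t)+T)/T}=e_R(t,t_0)\,M$.

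With both monodromy laws in hand I set $L(t):=\Phi_A(t,t_0)\,e_R(t_0,t)$, so that $\Phi_A(t,t_0)=L(t)e_R(t,t_0)$. Periodicity of $L$ is then a one-line cancellation:
\[
L(\delta_+^T(t))=\Phi_A(t,t_0)M\,[e_R(t,t_0)M]^{-1}=\Phi_A(t,t_0)e_R(t_0,t)=L(t).
\]
Invertibility of $L$ is immediate from that of $\Phi_A$ and $e_R$, $L\in C^1_{rd}$ follows since $\Phi_A(\cdot,t_0)$ and $e_R(t_0,\cdot)$ are delta-differentiable, and realness of $L$ comes from the conjugate-pairing built into the real power \eqref{real power of matrix}. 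For $R$, differentiating $e_R(\delta_+^T(t),t_0)=e_R(t,t_0)M$ by the time-scale chain rule (\cite[Theorem 1.93]{[1]}) and cancelling the invertible exponential gives $R(\delta_+^T(t))\,\delta_+^{\Delta T}(t)=R(t)$, i.e. $R$ is $\Delta$-periodic in shifts (which collapses to ordinary periodicity when $\mathbb{T}$ is additive, since then $\delta_+^{\Delta T}\equiv1$). Finally, the decomposition at a general base point follows by factoring $\Phi_A(t,\tau)=\Phi_A(t,t_0)\Phi_A(\tau,t_0)^{-1}$, substituting $\Phi_A(\cdot,t_0)=L(\cdot)e_R(\cdot,t_0)$, and collapsing $e_R(t,t_0)e_R(t_0,\tau)=e_R(t,\tau)$ via Theorem \ref{thm2.1}, giving $\Phi_A(t,\tau)=L(t)e_R(t,\tau)L^{-1}(\tau)$.

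The main obstacle is the additivity identity $\Theta(\delta_+^T(t))=\Theta(t)+T$; everything downstream (both monodromy laws, periodicity of $L$, the $\Delta$-periodicity of $R$, and the general-$\tau$ factorization) is formal once it is secured. Within it the delicate point is the shift-invariance $\delta_-(\delta_+^T(x),\delta_+^T(y))=\delta_-(x,y)$ controlling the correction term $G$: unlike the additive case the shift-distances are not literal differences, so one must argue through the commutation axioms of Definition \ref{def3.1} rather than through subtraction.
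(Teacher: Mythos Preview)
Your proposal is correct and follows the same overall architecture as the paper's proof: set $M:=\Phi_A(\delta_+^T(t_0),t_0)$, invoke Theorem~\ref{thm4.1} to produce $R$ with $e_R(\delta_+^T(t_0),t_0)=M$, define $L(t):=\Phi_A(t,t_0)e_R^{-1}(t,t_0)$, verify $L$ is $T$-periodic in shifts, and then factor $\Phi_A(t,\tau)=\Phi_A(t,t_0)\Phi_A(t_0,\tau)$ to get the general decomposition.

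The one substantive difference lies in how the shift-invariance $e_R(\delta_+^T(t),\delta_+^T(t_0))=e_R(t,t_0)$ (equivalently, your monodromy law $e_R(\delta_+^T(t),t_0)=e_R(t,t_0)M$) is obtained. The paper simply invokes \eqref{2.2}, i.e.\ the corollary to Lemma~\ref{lem4.2} for $\Delta$-periodic coefficients, treating $R$ as already $\Delta$-periodic; it never verifies that hypothesis within the proof. You instead establish the identity $\Theta(\delta_+^T(t))=\Theta(t)+T$ directly from the combinatorics of \eqref{4.1.1}--\eqref{G(t)} and the shift-operator axioms, obtain the monodromy law from $e_R(t,t_0)=M^{\Theta(t)/T}$ and exponent additivity, and then \emph{deduce} the $\Delta$-periodicity of $R$ by differentiating. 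Your route is therefore more self-contained and closes what is arguably a small circularity in the paper's argument (citing \eqref{2.2} before the periodicity of $R$ is known). The price is that you must verify the shift-invariance $\delta_-(\delta_+^T(x),\delta_+^T(y))=\delta_-(x,y)$ needed for $G(\delta_+^T(t))=G(t)$; you rightly flag this as the delicate point, and it does follow from Definition~\ref{def3.1}(5) combined with Lemma~\ref{lem3.1}(3),(5), though the paper never isolates it.

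One minor remark: you conclude that $R$ is $\Delta$-periodic in shifts, whereas the theorem statement says ``periodic in shifts.'' Your version is the correct one in general (the two coincide only when $\delta_+^{\Delta T}\equiv 1$, e.g.\ in the additive case), so this is a refinement of the statement rather than a defect in your argument.
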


\begin{proof}
Setting $M:=\Phi_{A}\left(  \delta_{+}^{T}\left(  t_{0}\right)  ,t_{0}\right)
$ define the matrix $R$ as in Theorem \ref{thm4.1}. Then we have%
\[
e_{R}\left(  \delta_{+}^{T}\left(  t_{0}\right)  ,t_{0}\right)  =\Phi
_{A}\left(  \delta_{+}^{T}\left(  t_{0}\right)  ,t_{0}\right)  .
\]
Define the matrix $L\left(  t\right)  $ by%
\begin{equation}
L\left(  t\right)  :=\Phi_{A}\left(  t,t_{0}\right)  e_{R}^{-1}\left(
t,t_{0}\right)  . \label{3.6}%
\end{equation}
Obviously, $L\left(  t\right)  \in C_{rd}^{1}\left(  \mathbb{T}^{\ast
},\mathbb{R}^{n\times n}\right)  $ and $L$ is invertible. The equality%
\begin{equation}
\Phi_{A}\left(  t,t_{0}\right)  =L\left(  t\right)  e_{R}\left(
t,t_{0}\right)  . \label{4}%
\end{equation}
along with (\ref{4}) implies%
\begin{align}
\Phi_{A}\left(  t_{0},t\right)   &  =e_{R}^{-1}\left(  t,t_{0}\right)
L^{-1}\left(  t\right) \nonumber\\
&  =e_{R}\left(  t_{0},t\right)  L^{-1}\left(  t\right)  . \label{5}%
\end{align}
Combining (\ref{4}) and (\ref{5}), we obtain (\ref{3}). To show periodicity of
$L$ in shifts we use (\ref{2.1}-\ref{2.2}) to get%
\begin{align*}
L\left(  \delta_{+}^{T}\left(  t\right)  \right)   &  =\Phi_{A}\left(
\delta_{+}^{T}\left(  t\right)  ,t_{0}\right)  e_{R}^{-1}\left(  \delta
_{+}^{T}\left(  t\right)  ,t_{0}\right) \\
&  =\Phi_{A}\left(  \delta_{+}^{T}\left(  t\right)  ,\delta_{+}^{T}\left(
t_{0}\right)  \right)  \Phi_{A}\left(  \delta_{+}^{T}\left(  t_{0}\right)
,t_{0}\right)  e_{R}\left(  t_{0},\delta_{+}^{T}\left(  t\right)  \right) \\
&  =\Phi_{A}\left(  \delta_{+}^{T}\left(  t\right)  ,\delta_{+}^{T}\left(
t_{0}\right)  \right)  \Phi_{A}\left(  \delta_{+}^{T}\left(  t_{0}\right)
,t_{0}\right)  e_{R}\left(  t_{0},\delta_{+}^{T}\left(  t_{0}\right)  \right)
e_{R}\left(  \delta_{+}^{T}\left(  t_{0}\right)  ,\delta_{+}^{T}\left(
t\right)  \right) \\
&  =\Phi_{A}\left(  \delta_{+}^{T}\left(  t\right)  ,\delta_{+}^{T}\left(
t_{0}\right)  \right)  e_{R}\left(  \delta_{+}^{T}\left(  t_{0}\right)
,\delta_{+}^{T}\left(  t\right)  \right) \\
&  =\Phi_{A}\left(  \delta_{+}^{T}\left(  t\right)  ,\delta_{+}^{T}\left(
t_{0}\right)  \right)  e_{R}^{-1}\left(  \delta_{+}^{T}\left(  t\right)
,\delta_{+}^{T}\left(  t_{0}\right)  \right) \\
&  =\Phi_{A}\left(  t,t_{0}\right)  e_{R}^{-1}\left(  t,t_{0}\right) \\
&  =L\left(  t\right)  .
\end{align*}
This completes the proof.
\end{proof}

Hereafter, we shall refer to (\ref{3}) as the \textit{Floquet decomposition}
for $\Phi_{A}$. The following result can be proven similar to \cite[Theorem
3.7]{[3]}.

\begin{theorem}
\label{thm2} Let $\Phi_{A}\left(  t,t_{0}\right)  =L\left(  t\right)
e_{R}\left(  t,t_{0}\right)  $ be a Floquet decomposition for $\Phi_{A}$.
Then, $x\left(  t\right)  =\Phi_{A}\left(  t,t_{0}\right)  x_{0}$ is a
solution of the $T$-periodic system (\ref{4.1}) if and only if \ $z\left(
t\right)  =L^{-1}\left(  t\right)  x\left(  t\right)  $ is a solution of the
system%
\[
z^{\Delta}\left(  t\right)  =R\left(  t\right)  z\left(  t\right)  ,\text{
}z\left(  t_{0}\right)  =x_{0}.
\]

\end{theorem}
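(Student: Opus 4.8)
The plan is to treat the Floquet decomposition $\Phi_A\left(t,t_0\right)=L\left(t\right)e_R\left(t,t_0\right)$ as a Lyapunov change of variables $x=Lz$ and to reduce the asserted equivalence to a single intertwining identity between $A$, $L$, and $R$. First I would dispatch the initial-condition bookkeeping: since $L\left(t_0\right)=\Phi_A\left(t_0,t_0\right)e_R^{-1}\left(t_0,t_0\right)=I$, the value $z\left(t_0\right)=L^{-1}\left(t_0\right)x\left(t_0\right)$ equals $x_0$ exactly when $x\left(t_0\right)=x_0$, so both systems carry the same initial data and only the dynamic equations remain to be matched.

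The crucial preliminary step is to differentiate the decomposition. Using the defining property $\Phi_A^{\Delta}\left(\cdot,t_0\right)=A\,\Phi_A\left(\cdot,t_0\right)$ on the left and the time-scale product rule $\left(Le_R\right)^{\Delta}=L^{\Delta}e_R+L^{\sigma}e_R^{\Delta}$ together with $e_R^{\Delta}\left(\cdot,t_0\right)=R\,e_R\left(\cdot,t_0\right)$ on the right, one gets $A\,L\,e_R=L^{\Delta}e_R+L^{\sigma}R\,e_R$. Since $e_R\left(\cdot,t_0\right)$ is invertible, cancelling it on the right yields the intertwining relation
\[
AL=L^{\Delta}+L^{\sigma}R,\qquad\text{equivalently}\qquad L^{\Delta}=AL-L^{\sigma}R,
\]
where $L^{\sigma}=L\circ\sigma$. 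This identity is the engine of both implications.

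For the forward direction, assuming $z^{\Delta}=Rz$ with $x=Lz$, the product rule gives $x^{\Delta}=L^{\Delta}z+L^{\sigma}z^{\Delta}=\left(L^{\Delta}+L^{\sigma}R\right)z$, and substituting the intertwining relation turns the right-hand side into $A\,L\,z=Ax$, so $x$ solves (\ref{4.1}). For the converse, assuming $x^{\Delta}=Ax$ and setting $z=L^{-1}x$, I would first derive the time-scale inverse-derivative formula $\left(L^{-1}\right)^{\Delta}=-\left(L^{\sigma}\right)^{-1}L^{\Delta}L^{-1}$ by differentiating $L^{-1}L=I$, then compute $z^{\Delta}=\left(L^{-1}\right)^{\Delta}x+\left(L^{\sigma}\right)^{-1}x^{\Delta}=-\left(L^{\sigma}\right)^{-1}L^{\Delta}L^{-1}x+\left(L^{\sigma}\right)^{-1}Ax$. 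Replacing $L^{\Delta}$ by $AL-L^{\sigma}R$ and using $L^{-1}x=z$, $Lz=x$, the two terms carrying $A$ cancel and what remains is precisely $z^{\Delta}=Rz$.

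The routine-but-delicate point, and essentially the only place where sign or shift errors creep in, is the consistent placement of the forward-jump $\sigma$ in the product and inverse-derivative rules; once the intertwining identity carries the correct $L^{\sigma}$, both directions collapse to pure algebra. I do not expect a genuinely hard step here, since all of the analytic content has been front-loaded into the Floquet decomposition (Theorem \ref{thm1}) and the construction of $R$ (Theorem \ref{thm4.1}), both of which may be invoked.
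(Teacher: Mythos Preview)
Your proposal is correct; the intertwining identity $AL=L^{\Delta}+L^{\sigma}R$ obtained by differentiating the decomposition is exactly the right engine, and both directions follow cleanly from it together with the time-scale product and inverse-derivative rules you state. The paper itself does not give a proof but simply defers to \cite[Theorem~3.7]{[3]}, which carries out precisely this Lyapunov change-of-variables computation in the additive-periodic setting; your argument is the natural transcription of that standard proof and requires no modification for the shift-periodic case since the decomposition $\Phi_A=L\,e_R$ and the identity $L(t_0)=I$ are already supplied by Theorem~\ref{thm1}.
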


\begin{theorem}
\label{thm3}There exists an initial state $x\left(  t_{0}\right)  =x_{0}\neq0$
such that the solution of (\ref{4.1}) is $T$-periodic in shifts if and only if
one of the eigenvalues of
\[
e_{R}\left(  \delta_{+}^{T}\left(  t_{0}\right)  ,t_{0}\right)  =\Phi
_{A}\left(  \delta_{+}^{T}\left(  t_{0}\right)  ,t_{0}\right)
\]
is $1$.
\end{theorem}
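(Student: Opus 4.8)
The plan is to reduce the $T$-periodicity of the solution to a single eigenvalue condition on the monodromy matrix $M := \Phi_A\left(\delta_+^T(t_0), t_0\right)$, which by Theorem \ref{thm4.1} coincides with $e_R\left(\delta_+^T(t_0), t_0\right)$. First I would record that the unique solution of (\ref{4.1}) is $x(t) = \Phi_A(t, t_0)x_0$, and that by Definition \ref{def3.3} the solution is $T$-periodic in shifts precisely when $x\left(\delta_+^T(t)\right) = x(t)$ for all $t \in \mathbb{T}^\ast$. The companion identity with $\delta_-^T$ follows automatically upon replacing $t$ by $\delta_-^T(t)$ and invoking property (4) of Definition \ref{def3.1}, so only the $\delta_+$ version needs to be treated directly.

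Next comes the main computation. Using the cocycle (semigroup) property of the transition matrix together with the shift-invariance from Lemma \ref{lem4.2}, namely $\Phi_A\left(\delta_+^T(t), \delta_+^T(t_0)\right) = \Phi_A(t, t_0)$, I would write
\begin{equation*}
x\left(\delta_+^T(t)\right) = \Phi_A\left(\delta_+^T(t), t_0\right)x_0 = \Phi_A\left(\delta_+^T(t), \delta_+^T(t_0)\right)\Phi_A\left(\delta_+^T(t_0), t_0\right)x_0 = \Phi_A(t, t_0)\,M\,x_0.
\end{equation*}
Comparing this with $x(t) = \Phi_A(t, t_0)x_0$, the periodicity requirement $x\left(\delta_+^T(t)\right) = x(t)$ becomes the identity $\Phi_A(t, t_0)(M - I)x_0 = 0$ for all $t \in \mathbb{T}^\ast$.

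Then I would close both implications. Evaluating the last identity at $t = t_0$ and using $\Phi_A(t_0, t_0) = I$ shows that $T$-periodicity forces $(M - I)x_0 = 0$; conversely, if $(M - I)x_0 = 0$ then $\Phi_A(t, t_0)(M - I)x_0 = 0$ holds identically, so $x\left(\delta_+^T(t)\right) = x(t)$ for every $t$. Hence a \emph{nonzero} initial state $x_0$ yielding a $T$-periodic solution exists if and only if the homogeneous system $(M - I)x_0 = 0$ has a nontrivial solution, i.e. $\det(M - I) = 0$, which is exactly the assertion that $1$ is an eigenvalue of $M = e_R\left(\delta_+^T(t_0), t_0\right) = \Phi_A\left(\delta_+^T(t_0), t_0\right)$.

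The step I expect to be the main, if modest, obstacle is the rigorous justification of the two structural identities in the middle display: the cocycle relation $\Phi_A(t, s)\Phi_A(s, r) = \Phi_A(t, r)$ and the shift-invariance $\Phi_A\left(\delta_+^T(t), \delta_+^T(t_0)\right) = \Phi_A(t, t_0)$. The latter is precisely Lemma \ref{lem4.2} and is therefore already available; the former is the standard transition-matrix identity following from the Peano--Baker construction in \cite{[17]}. Everything else is elementary linear algebra, and the only point requiring care is that the equivalence must be phrased for a genuinely nonzero $x_0$, which is guaranteed exactly by the singularity of $M - I$.
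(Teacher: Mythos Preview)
Your argument is correct, but it proceeds along a genuinely different route from the paper's own proof. The paper proves both directions by invoking the Floquet decomposition $\Phi_A(t,t_0)=L(t)e_R(t,t_0)L^{-1}(t_0)$ from Theorem~\ref{thm1}: for the forward direction it manipulates $x(\delta_+^T(t))=L(\delta_+^T(t))e_R(\delta_+^T(t),t_0)L^{-1}(t_0)x_0$ using the periodicity of $L$ and the identity $e_R(\delta_+^T(t),\delta_+^T(t_0))=e_R(t,t_0)$ to conclude that $L^{-1}(t_0)x_0$ is an eigenvector of $e_R(\delta_+^T(t_0),t_0)$ for the eigenvalue $1$; for the converse it starts from an eigenvector $z_0$, shows $z(t)=e_R(t,t_0)z_0$ is $T$-periodic, and then sets $x_0=L(t_0)z_0$. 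By contrast, you bypass the Floquet decomposition entirely and work directly with the monodromy matrix via the cocycle identity and Lemma~\ref{lem4.2}, reducing everything to the single equation $(M-I)x_0=0$. Your approach is more elementary and self-contained, requiring neither Theorem~\ref{thm4.1} nor Theorem~\ref{thm1}; the paper's approach, on the other hand, makes the eigenvector explicit in terms of the Lyapunov factor $L$ and showcases how the decomposition machinery operates, which is thematically consistent with the surrounding development.
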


\begin{proof}
Suppose that $x\left(  t_{0}\right)  =x_{0}$ and $x\left(  t\right)  $ is a
solution of (\ref{4.1}) which is $T$-periodic in shifts. By Theorem
\ref{thm1}, the Floquet decomposition of $x$ is given by%
\[
x\left(  t\right)  =\Phi_{A}\left(  t,t_{0}\right)  x_{0}=L\left(  t\right)
e_{R}\left(  t,t_{0}\right)  L^{-1}\left(  t_{0}\right)  x_{0},
\]
which also yields%
\[
x\left(  \delta_{+}^{T}\left(  t\right)  \right)  =L\left(  \delta_{+}%
^{T}\left(  t\right)  \right)  e_{R}\left(  \delta_{+}^{T}\left(  t\right)
,t_{0}\right)  L^{-1}\left(  t_{0}\right)  x_{0}.
\]
By $T$-periodicity of $x$ and $L$ in shifts, we have%
\[
e_{R}\left(  t,t_{0}\right)  L^{-1}\left(  t_{0}\right)  x_{0}=e_{R}\left(
\delta_{+}^{T}\left(  t\right)  ,t_{0}\right)  L^{-1}\left(  t_{0}\right)
x_{0},
\]
and therefore,%
\[
e_{R}\left(  t,t_{0}\right)  L^{-1}\left(  t_{0}\right)  x_{0}=e_{R}\left(
\delta_{+}^{T}\left(  t\right)  ,\delta_{+}^{T}\left(  t_{0}\right)  \right)
e_{R}\left(  \delta_{+}^{T}\left(  t_{0}\right)  ,t_{0}\right)  L^{-1}\left(
t_{0}\right)  x_{0}.
\]
Since $e_{R}\left(  \delta_{+}^{T}\left(  t\right)  ,\delta_{+}^{T}\left(
t_{0}\right)  \right)  =e_{R}\left(  t,t_{0}\right)  $ the last equality
implies%
\[
e_{R}\left(  t,t_{0}\right)  L^{-1}\left(  t_{0}\right)  x_{0}=e_{R}\left(
t,t_{0}\right)  e_{R}\left(  \delta_{+}^{T}\left(  t_{0}\right)
,t_{0}\right)  L^{-1}\left(  t_{0}\right)  x_{0}%
\]
and thus%
\[
L^{-1}\left(  t_{0}\right)  x_{0}=e_{R}\left(  \delta_{+}^{T}\left(
t_{0}\right)  ,t_{0}\right)  L^{-1}\left(  t_{0}\right)  x_{0}.
\]
Since $L^{-1}\left(  t_{0}\right)  x_{0}\neq0$, we see that $L^{-1}\left(
t_{0}\right)  x_{0}$ is an eigenvector of the matrix $e_{R}\left(  \delta
_{+}^{T}\left(  t_{0}\right)  ,t_{0}\right)  $ corresponding to an eigenvalue
of $1.$

Conversely, let us assume that $1$ is an eigenvalue of $e_{R}\left(
\delta_{+}^{T}\left(  t_{0}\right)  ,t_{0}\right)  $ with corresponding
eigenvector $z_{0.}$ This means $z_{0}$ is real valued and nonzero. Using
$e_{R}\left(  t,t_{0}\right)  =e_{R}\left(  \delta_{+}^{T}\left(  t\right)
,\delta_{+}^{T}\left(  t_{0}\right)  \right)  $, we arrive at the following
equality%
\begin{align*}
z\left(  \delta_{+}^{T}\left(  t\right)  \right)   &  =e_{R}\left(  \delta
_{+}^{T}\left(  t\right)  ,t_{0}\right)  z_{0}\\
&  =e_{R}\left(  \delta_{+}^{T}\left(  t\right)  ,\delta_{+}^{T}\left(
t_{0}\right)  \right)  e_{R}\left(  \delta_{+}^{T}\left(  t_{0}\right)
,t_{0}\right)  z_{0}\\
&  =e_{R}\left(  \delta_{+}^{T}\left(  t\right)  ,\delta_{+}^{T}\left(
t_{0}\right)  \right)  z_{0}\\
&  =e_{R}\left(  t,t_{0}\right)  z_{0}\\
&  =z\left(  t\right)  ,
\end{align*}
which shows that $z\left(  t\right)  =e_{R}\left(  t,t_{0}\right)z_{0} $ is
$T$-periodic in shifts. Applying the Floquet decomposition and setting
$x_{0}:=L\left(  t_{0}\right)  z_{0}$, we obtain the nontrivial solution $x$
of (\ref{4.1}) as follows%
\[
x\left(  t\right)  =\Phi_{A}\left(  t,t_{0}\right)  x_{0}=L\left(  t\right)
e_{R}\left(  t,t_{0}\right)  L^{-1}\left(  t_{0}\right)  x_{0}=L\left(
t\right)  e_{R}\left(  t,t_{0}\right)  z_{0}=L\left(  t\right)  z\left(
t\right)  ,
\]
which is $T$-periodic in shifts since $L$ and $z$ are $T$-periodic in shifts.
\end{proof}

\subsection{Nonhomogeneous Case}

Let us focus on the nonhomogeneous regressive time varying linear dynamic
initial value problem%
\begin{equation}
x^{\Delta}\left(  t\right)  =A\left(  t\right)  x\left(  t\right)  +F\left(
t\right)  ,\text{ }x\left(  t_{0}\right)  =x_{0}, \label{6}%
\end{equation}
where $A:\mathbb{T}^{\ast}\mathbb{\rightarrow R}^{n\times n}$, $F\in
C_{rd}\left(  \mathbb{T}^{\ast},\mathbb{R}^{n}\right)  \cap\mathcal{R}\left(
\mathbb{T}^{\ast},\mathbb{R}^{n}\right)  $. Hereafter, we suppose both $A$ and
$F$ are $\Delta$-periodic in shifts with the period $T$.

\begin{lemma}
\label{lem4}A solution $x\left(  t\right)  $ of (\ref{6}) is $T$-periodic in
shifts if and only if $x\left(  \delta_{+}^{T}\left(  t\right)  \right)
=x\left(  t\right)  $ for all $t\in\mathbb{T}^{\ast}$.
\end{lemma}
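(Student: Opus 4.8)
The plan is to prove the two implications separately, noting that the necessity direction is immediate from the definition of periodicity in shifts while the sufficiency direction is where the shift-operator algebra does the work. Recall from Definition \ref{def3.3} that $x$ being $T$-periodic in shifts means that \emph{both} $x\left(  \delta_{+}^{T}\left(  t\right)  \right)  =x\left(  t\right)  $ and $x\left(  \delta_{-}^{T}\left(  t\right)  \right)  =x\left(  t\right)  $ hold for all $t\in\mathbb{T}^{\ast}$. Hence the necessity direction ($\Rightarrow$) is trivial: if $x$ is $T$-periodic in shifts, then in particular its forward-shift invariance $x\left(  \delta_{+}^{T}\left(  t\right)  \right)  =x\left(  t\right)  $ holds for every $t\in\mathbb{T}^{\ast}$.

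For the sufficiency direction ($\Leftarrow$), I assume $x\left(  \delta_{+}^{T}\left(  t\right)  \right)  =x\left(  t\right)  $ for all $t\in\mathbb{T}^{\ast}$ and must recover the backward-shift invariance $x\left(  \delta_{-}^{T}\left(  t\right)  \right)  =x\left(  t\right)  $. The key tool is the inverse relationship between the shift operators recorded in item (4) of Definition \ref{def3.1} (equivalently item (3) of Lemma \ref{lem3.1}): since $\mathbb{T}$ is $P$-periodic in shifts and $T\geq P$, we have $\left(  T,t\right)  \in\mathcal{D}_{-}$ for every $t\in\mathbb{T}^{\ast}$, so $u:=\delta_{-}^{T}\left(  t\right)  \in\mathbb{T}^{\ast}$, and moreover $\left(  T,u\right)  \in\mathcal{D}_{+}$ with $\delta_{+}^{T}\left(  u\right)  =\delta_{+}^{T}\left(  \delta_{-}^{T}\left(  t\right)  \right)  =t$.

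With this substitution in hand, the argument closes in one line. Applying the hypothesis at the point $u$ gives $x\left(  u\right)  =x\left(  \delta_{+}^{T}\left(  u\right)  \right)  =x\left(  t\right)  $; that is, $x\left(  \delta_{-}^{T}\left(  t\right)  \right)  =x\left(  t\right)  $. Since $t\in\mathbb{T}^{\ast}$ was arbitrary, the backward-shift invariance holds on all of $\mathbb{T}^{\ast}$, and combining it with the assumed forward-shift invariance yields that $x$ is $T$-periodic in shifts in the sense of Definition \ref{def3.3}.

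I do not expect any genuine obstacle here: the statement is really a property of functions that are forward-invariant under the shift, and the dynamic equation (\ref{6}) plays no role in the argument beyond providing the object $x$. The only point requiring care is the bookkeeping of the domains $\mathcal{D}_{\pm}$---one must confirm that the backward shift $\delta_{-}^{T}\left(  t\right)  $ lands in $\mathbb{T}^{\ast}$ and that the forward shift can then be applied to it---but this is guaranteed precisely by the periodicity-in-shifts hypothesis on $\mathbb{T}$ together with the choice $T\in\left[  P,\infty\right)  _{\mathbb{T}^{\ast}}$.
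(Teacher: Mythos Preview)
Your argument is correct, but it is not the paper's. You treat the lemma purely as a statement about shift-operator algebra: since $T$-periodicity means invariance under both $\delta_+^T$ and $\delta_-^T$, the only content is that forward invariance implies backward invariance, which you obtain from the inversion identity $\delta_+^T\circ\delta_-^T=\mathrm{id}$ (Definition~\ref{def3.1}(4)). As you observe, the dynamic equation (\ref{6}) plays no role and the argument works for any function on $\mathbb{T}^*$.

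The paper instead exploits that $x$ solves (\ref{6}). Setting $z(t):=x(\delta_+^T(t))-x(t)$, it uses the chain rule together with the $\Delta$-periodicity of $A$ and $F$ to show $z^\Delta=A(t)z$, and then invokes uniqueness with $z(t_0)=0$ to conclude $z\equiv 0$. The payoff of this ODE route is a strictly stronger statement than the lemma as worded: only the single-point condition $x(\delta_+^T(t_0))=x(t_0)$ is used to get $z(t_0)=0$, so the argument shows that forward-shift invariance at the initial point alone forces it everywhere. This one-point criterion is precisely what is invoked in the proof of Theorem~\ref{thm5}, where periodicity is reduced to the algebraic equation $x(\delta_+^T(t_0))=x_0$. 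Your shift-algebra proof, while valid for the lemma as stated, does not deliver this reduction; if you follow your route you will need a separate argument at that step of Theorem~\ref{thm5}.
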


\begin{proof}
Suppose that $x\left(  t\right)  $ is $T$-periodic in shifts. Let us define
$z\left(  t\right)  $ as
\begin{equation}
z\left(  t\right)  =x\left(  \delta_{+}^{T}\left(  t\right)  \right)
-x\left(  t\right)  . \label{7}%
\end{equation}
Obviously $z\left(  t_{0}\right)  =0.$ Moreover, if we take delta derivative
of both sides of (\ref{7}), we have the following:%
\begin{align*}
z^{\Delta}\left(  t\right)   &  =\left[  x\left(  \delta_{+}^{T}\left(
t\right)  \right)  -x\left(  t\right)  \right]  ^{\Delta}\\
&  =x^{\Delta}\left(  \delta_{+}^{T}\left(  t\right)  \right)  -x^{\Delta
}\left(  t\right) \\
&  =x^{\Delta}\left(  \delta_{+}^{T}\left(  t\right)  \right)  \delta
_{+}^{^{\Delta}T}\left(  t\right)  -x^{\Delta}\left(  t\right) \\
&  =A\left(  \delta_{+}^{T}\left(  t\right)  \right)  x\left(  \delta_{+}%
^{T}\left(  t\right)  \right)  \delta_{+}^{^{\Delta}T}\left(  t\right)
+F\left(  \delta_{+}^{T}\left(  t\right)  \right)  \delta_{+}^{^{\Delta}%
T}\left(  t\right)  -A\left(  t\right)  x\left(  t\right)  -F\left(  t\right)
.
\end{align*}
Since $A$ and $F$ are both $\Delta$-periodic in shifts with the period $T$, we
have%
\begin{align*}
z^{\Delta}\left(  t\right)   &  =A\left(  t\right)  x\left(  \delta_{+}%
^{T}\left(  t\right)  \right)  +F\left(  t\right)  -A\left(  t\right)
x\left(  t\right)  -F\left(  t\right) \\
&  =A\left(  t\right)  \left[  x\left(  \delta_{+}^{T}\left(  t\right)
\right)  -x\left(  t\right)  \right] \\
&  =A\left(  t\right)  z\left(  t\right)  .
\end{align*}
By uniqueness of solutions, we can conclude that $z\left(  t\right)  \equiv0$
and that $x\left(  \delta_{+}^{T}\left(  t\right)  \right)  =x\left(
t\right)  $ for all $t\in\mathbb{T}^{\ast}.$
\end{proof}

\begin{theorem}
\label{thm5} For any initial point $t_{0}\in\mathbb{T}^{\ast}$ and for any
function $F$ that is $\Delta$-periodic in shifts with period $T$, there exists
an initial state $x\left(  t_{0}\right)  =x_{0}$ such that the solution of
(\ref{6}) is $T$-periodic in shifts if and only if there does not exist a
nonzero $z\left(  t_{0}\right)  =z_{0}$ and $t_{0}\in\mathbb{T}^{\ast}$ such
that the homogeneous initial value problem%
\begin{equation}
z^{\Delta}\left(  t\right)  =A\left(  t\right)  z\left(  t\right)  ,\text{
}z\left(  t_{0}\right)  =z_{0}, \label{8}%
\end{equation}
has a solution that is $T$-periodic in shifts.
\end{theorem}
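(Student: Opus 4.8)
**

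The plan is to prove this Fredholm-alternative-type statement by explicitly writing down, via the variation of constants formula (Theorem \ref{thm2.2}), the solution of the nonhomogeneous problem (\ref{6}) and then characterizing when its $T$-periodicity in shifts can be arranged by a suitable choice of initial state $x_0$. By Lemma \ref{lem4}, periodicity in shifts of the solution is equivalent to the single closing condition $x\left(\delta_+^T\left(t_0\right)\right)=x\left(t_0\right)$, so the whole problem reduces to solving one linear algebraic equation for $x_0$. This is the key reduction that makes the theorem tractable, and I would invoke it at the very beginning.

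First I would write the solution of (\ref{6}) using Theorem \ref{thm2.2}:
\begin{equation*}
x\left(t\right)=\Phi_{A}\left(t,t_{0}\right)x_{0}+\int_{t_{0}}^{t}\Phi_{A}\left(t,\sigma\left(\tau\right)\right)F\left(\tau\right)\Delta\tau,
\end{equation*}
where I use the transition matrix $\Phi_A$ in place of the matrix exponential since $A$ need not commute with its integral. Evaluating at $t=\delta_+^T\left(t_0\right)$ and imposing the closing condition from Lemma \ref{lem4}, namely $x\left(\delta_+^T\left(t_0\right)\right)=x_0$, I obtain
\begin{equation*}
\left[I-\Phi_{A}\left(\delta_+^T\left(t_0\right),t_{0}\right)\right]x_{0}=\int_{t_{0}}^{\delta_+^T\left(t_0\right)}\Phi_{A}\left(\delta_+^T\left(t_0\right),\sigma\left(\tau\right)\right)F\left(\tau\right)\Delta\tau.
\end{equation*}
Write $M:=\Phi_A\left(\delta_+^T\left(t_0\right),t_0\right)$ and let $b$ denote the integral on the right. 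The existence of a periodic solution is then exactly the solvability of the linear system $\left(I-M\right)x_0=b$.

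Next I would translate the solvability of $\left(I-M\right)x_0=b$ into a statement about the homogeneous problem (\ref{8}). The linear system $\left(I-M\right)x_0=b$ is solvable \emph{for every} right-hand side $b$ (hence for every $F$, as $F$ ranges over $\Delta$-periodic forcing terms) if and only if $I-M$ is invertible, which holds if and only if $1$ is not an eigenvalue of $M$. On the other hand, Theorem \ref{thm3} shows that the homogeneous system (\ref{4.1}) (equivalently (\ref{8})) admits a nontrivial $T$-periodic-in-shifts solution precisely when $1$ \emph{is} an eigenvalue of $M=e_R\left(\delta_+^T\left(t_0\right),t_0\right)=\Phi_A\left(\delta_+^T\left(t_0\right),t_0\right)$. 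Combining these two equivalences gives the theorem: a periodic solution of the nonhomogeneous problem exists for all such $F$ if and only if the homogeneous problem has no nontrivial periodic-in-shifts solution.

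The main obstacle I anticipate is the phrase ``for any function $F$'' in the statement, which signals that the right quantifier reading is essential: one wants $\left(I-M\right)x_0=b$ solvable for \emph{all} admissible $b$, and this universal solvability is what forces invertibility of $I-M$ rather than mere consistency for one particular $b$. I would therefore be careful to argue that as $F$ varies over $\Delta$-periodic-in-shifts forcing terms the vector $b$ sweeps out all of $\mathbb{R}^n$ (it suffices that $F$ can take arbitrary values near $t_0$, since $\Phi_A\left(\delta_+^T\left(t_0\right),\sigma\left(\tau\right)\right)$ is invertible), so that no special $F$ can accidentally land $b$ in the range of $I-M$ when $1$ is an eigenvalue. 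The remaining steps — the variation of constants computation and the eigenvalue bookkeeping — are routine once Lemma \ref{lem4} and Theorem \ref{thm3} are in hand.
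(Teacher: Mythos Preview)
Your proposal follows essentially the same route as the paper: variation of constants, Lemma \ref{lem4} to reduce periodicity to the closing condition $x\!\left(\delta_+^T(t_0)\right)=x_0$, the resulting linear equation $\left(I-M\right)x_0=b$ with $M=\Phi_A\!\left(\delta_+^T(t_0),t_0\right)$, and Theorem \ref{thm3} to identify invertibility of $I-M$ with the absence of nontrivial periodic homogeneous solutions.

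The two places where the paper is more explicit than your sketch are worth noting. First, for the surjectivity of $F\mapsto b$ (the point you flag as the main obstacle), the paper does not merely say ``$F$ can take arbitrary values near $t_0$''; it constructs an explicit $\Delta$-periodic forcing $F(t)=\Phi_A\!\left(\sigma(t),\delta_+^T(t_0)\right)\xi(t)F_0$, where $\xi$ is a carefully built scalar function shown to satisfy the $\Delta$-periodicity relation $\xi\!\left(\delta_+^T(t)\right)\delta_+^{\Delta T}(t)=\xi(t)$. This is the nontrivial step on a general time scale periodic in shifts: one cannot simply prescribe $F$ freely on $[t_0,\delta_+^T(t_0))$ without checking that the resulting extension is genuinely $\Delta$-periodic (the factor $\delta_+^{\Delta T}$ must be absorbed), and the function $\xi$ does exactly that job. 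Second, the paper also verifies that the condition ``$1$ is not an eigenvalue of $\Phi_A\!\left(\delta_+^T(\eta),\eta\right)$'' is independent of the base point $\eta$, via a conjugation identity using periodicity of $\Phi_A$; your write-up leaves this implicit. Neither point is a genuine gap in your strategy, but both deserve the explicit treatment the paper gives them.
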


\begin{proof}
In \cite{[16]}, the following representation for the solution of (\ref{6}) is
given%
\[
x\left(  t\right)  =X\left(  t\right)  X^{-1}\left(  \tau\right)  x_{0}+%
%TCIMACRO{\dint \limits_{\tau}^{t}}%
%BeginExpansion
{\displaystyle\int\limits_{\tau}^{t}}
%EndExpansion
X\left(  t\right)  X^{-1}\left(  \sigma\left(  s\right)  \right)  F\left(
s\right)  \Delta s,
\]
where $X\left(  t\right)  $ is a fundamental matrix solution of the homogenous
system (\ref{4.1}) with respect to initial condition $x\left(  \tau\right)
=x_{0}$. As it is done in \cite{[16]}, we can express $x\left(  t\right)  $ as
follows%
\[
x\left(  t\right)  =\Phi_{A}\left(  t,t_{0}\right)  x_{0}+\int_{t_{0}}^{t}%
\Phi_{A}\left(  t,\sigma\left(  s\right)  \right)  F\left(  s\right)  \Delta
s.
\]

By the previous lemma we know that $x\left(  t\right)  $ is $T$-periodic in
shifts if and only if $x\left(  \delta_{+}^{T}\left(  t_{0}\right)  \right)
=x_{0}$ or equivalently
\begin{equation}
\left[  I-\Phi_{A}\left(  \delta_{+}^{T}\left(  t_{0}\right)  ,t_{0}\right)
\right]  x_{0}=%
%TCIMACRO{\dint \limits_{t_{0}}^{\delta_{+}^{T}\left(  t_{0}\right)  }}%
%BeginExpansion
{\displaystyle\int\limits_{t_{0}}^{\delta_{+}^{T}\left(  t_{0}\right)  }}
%EndExpansion
\Phi_{A}\left(  \delta_{+}^{T}\left(  t_{0}\right)  ,\sigma\left(  s\right)
\right)  F\left(  s\right)  \Delta s. \label{8**}%
\end{equation}
By guidance of Theorem \ref{thm3}, we have to show that (\ref{6}) has a
solution with respect to initial condition $x\left(  t_{0}\right)  =x_{0}$ if
and only if $e_{R}\left(  \delta_{+}^{T}\left(  t_{0}\right)  ,t_{0}\right)  $
has no eigenvalues equal to $1.$

Let $e_{R}\left(  \delta_{+}^{T}\left(  \eta\right)  ,\eta\right)  =\Phi
_{A}\left(  \delta_{+}^{T}\left(  \eta\right)  ,\eta\right)  $, for some
$\eta\in\mathbb{T}^{\ast}$, has no eigenvalues equal to $1$. That is,%
\[
\det\left[  I-\Phi_{A}\left(  \delta_{+}^{T}\left(  \eta\right)  ,\eta\right)
\right]  \neq0.
\]
Invertibility and periodicity of $\Phi_{A}$ imply%
\begin{align}
0  &  \neq\det\left[  \Phi_{A}\left(  \delta_{+}^{T}\left(  t_{0}\right)
,\delta_{+}^{T}\left(  \eta\right)  \right)  \left(  I-\Phi_{A}\left(
\delta_{+}^{T}\left(  \eta\right)  ,\eta\right)  \right)  \Phi_{A}\left(
\eta,t_{0}\right)  \right] \nonumber\\
&  =\det\left[  \Phi_{A}\left(  \delta_{+}^{T}\left(  t_{0}\right)
,\delta_{+}^{T}\left(  \eta\right)  \right)  \Phi_{A}\left(  \eta
,t_{0}\right)  -\Phi_{A}\left(  \delta_{+}^{T}\left(  t_{0}\right)
,t_{0}\right)  \right]  . \label{9}%
\end{align}
By periodicity of $\Phi_{A}$, the invertibility of $\left[  I-\Phi_{A}\left(
\delta_{+}^{T}\left(  t_{0}\right)  ,t_{0}\right)  \right]  $ is equivalent to
(\ref{9}) for any $t_{0}\in\mathbb{T}^{\ast}$. Thus, (\ref{8**}) has a
solution%
\[
x_{0}=\left[  I-\Phi_{A}\left(  \delta_{+}^{T}\left(  t_{0}\right)
,t_{0}\right)  \right]  ^{-1}%
%TCIMACRO{\dint \limits_{t_{0}}^{\delta_{+}^{T}\left(  t_{0}\right)  }}%
%BeginExpansion
{\displaystyle\int\limits_{t_{0}}^{\delta_{+}^{T}\left(  t_{0}\right)  }}
%EndExpansion
\Phi_{A}\left(  \delta_{+}^{T}\left(  t_{0}\right)  ,\sigma\left(  s\right)
\right)  F\left(  s\right)  \Delta s
\]
for any $t_{0}\in\mathbb{T}^{\ast}$ and for any $\Delta$-periodic function $F$
in shifts with period $T$.

Suppose that (\ref{8**}) has a solution for every $t_{0}\in\mathbb{T}^{\ast}$
and every $\Delta$-periodic function $F$ in shifts with period $T$. Let us
define the set $P_{-}\left(  t\right)  $ as%
\[
P_{-}\left(  t\right)  =\left\{  k\in\mathbb{Z}:\delta_{-}^{\left(  k\right)
}\left(  T,t\right)  \right\}  .
\]
It is clear that, $P_{-}\left(  t\right)  =P_{-}\left(  \delta_{+}^{T}\left(
t\right)  \right)  $. Additionally, let the function $\xi$ be defined by%
\begin{align*}
\xi\left(  t\right)   &  :=%
%TCIMACRO{\dprod \limits_{s\in P_{-}\left(  t\right)  \cap\left[
%t_{0},t\right)  }}%
%BeginExpansion
{\displaystyle\prod\limits_{s\in P_{-}\left(  t\right)  \cap\left[
t_{0},t\right)  }}
%EndExpansion
\left(  \delta_{+}^{\Delta T}\left(  s\right)  \right)  ^{-1}\\
&  =\left(  \delta_{+}^{\Delta T}\left(  \delta_{-}\left(  T,t\right)
\right)  \right)  ^{-1}\times\left(  \delta_{+}^{\Delta T}\left(  \delta
_{-}^{\left(  2\right)  }\left(  T,t\right)  \right)  \right)  ^{-1}%
\times\ldots\times\left(  \delta_{+}^{\Delta T}\left(  \delta_{-}^{\left(
m^{-}\left(  t\right)  \right)  }\left(  T,t\right)  \right)  \right)  ^{-1},
\end{align*}
where $m^{-}\left(  t\right)  =\max\left\{  k\in\mathbb{Z}:\delta_{-}%
^{(k)}\left(  T,t\right)  \geq t_{0}\right\}  $. By definition of $\xi$, we
have%
\begin{align*}
\xi\left(  \delta_{+}^{T}\left(  t\right)  \right)   &  =%
%TCIMACRO{\dprod \limits_{s\in P_{-}\left(  \delta_{+}^{T}\left(
%t\right)  \right)  \cap\left[  t_{0},\delta_{+}^{T}\left(  t\right)
%\right)  }}%
%BeginExpansion
{\displaystyle\prod\limits_{s\in P_{-}\left(  \delta_{+}^{T}\left(
t\right)  \right)  \cap\left[  t_{0},\delta_{+}^{T}\left(  t\right)
\right)  }}
%EndExpansion
\left(  \delta_{+}^{\Delta T}(s)\right)  ^{-1}\\
&  =%
%TCIMACRO{\dprod \limits_{s\in P_{-}\left(  t\right)  \cap\left[  t_{0}%
%,\delta_{+}^{T}\left(  t\right)  \right)  }}%
%BeginExpansion
{\displaystyle\prod\limits_{s\in P_{-}\left(  t\right)  \cap\left[
t_{0},\delta_{+}^{T}\left(  t\right)  \right)  }}
%EndExpansion
\left(  \delta_{+}^{\Delta T}\left(  s\right)  \right)  ^{-1}\\
&  =\left(  \delta_{+}^{\Delta T}\left(  t\right)  \right)  ^{-1}%
%TCIMACRO{\dprod \limits_{s\in P_{-}\left(  t\right)  \cap\left[
%t_{0},t\right)  }}%
%BeginExpansion
{\displaystyle\prod\limits_{s\in P_{-}\left(  t\right)  \cap\left[
t_{0},t\right)  }}
%EndExpansion
\left(  \delta_{+}^{\Delta T}\left(  s\right)  \right)  ^{-1}\\
&  =\left(  \delta_{+}^{\Delta T}\left(  t\right)  \right)  ^{-1}\xi\left(
t\right)  ,
\end{align*}
which shows that $\xi$ is $\Delta$-periodic in shifts with period $T$. For an
arbitrary $t_{0}$ and corresponding $F_{0},$ we can define a regressive and
$\Delta$-periodic function $F$ in shifts as follows%
\begin{equation}
F\left(  t\right)  :=\Phi_{A}\left(  \sigma\left(  t\right)  ,\delta_{+}%
^{T}\left(  t_{0}\right)  \right)  \xi\left(  t\right)  F_{0},\ \ t\in\left[
t_{0},\delta_{+}^{T}\left(  t_{0}\right)  \right)  \cap\mathbb{T}. \label{9*}%
\end{equation}
Then, we have%
\begin{equation}%
%TCIMACRO{\dint \limits_{t_{0}}^{\delta_{+}^{T}\left(  t_{0}\right)  }}%
%BeginExpansion
{\displaystyle\int\limits_{t_{0}}^{\delta_{+}^{T}\left(  t_{0}\right)  }}
%EndExpansion
\Phi_{A}\left(  \delta_{+}^{T}\left(  t_{0}\right)  ,\sigma\left(  s\right)
\right)  F\left(  s\right)  \Delta s=F_{0}%
%TCIMACRO{\dint \limits_{t_{0}}^{\delta_{+}^{T}\left(  t_{0}\right)  }}%
%BeginExpansion
{\displaystyle\int\limits_{t_{0}}^{\delta_{+}^{T}\left(  t_{0}\right)  }}
%EndExpansion
\xi\left(  s\right)  \Delta s. \label{9**}%
\end{equation}
Thus, (\ref{8**}) can be rewritten as follows%
\begin{equation}
\left[  I-\Phi_{A}\left(  \delta_{+}^{T}\left(  t_{0}\right)  ,t_{0}\right)
\right]  x_{0}=%
%TCIMACRO{\dint \limits_{t_{0}}^{\delta_{+}^{T}\left(  t_{0}\right)  }}%
%BeginExpansion
{\displaystyle\int\limits_{t_{0}}^{\delta_{+}^{T}\left(  t_{0}\right)  }}
%EndExpansion
\xi\left(  s\right)  \Delta s. \label{9***}%
\end{equation}
For any $F$ that is constructed in (\ref{9*}), and hence for any corresponding
$F_{0}$, (\ref{9***}) has a solution for $x_{0}$ by assumption. Therefore,%
\[
\det\left[  I-\Phi_{A}\left(  \delta_{+}^{T}\left(  t_{0}\right)
,t_{0}\right)  \right]  \neq0.
\]
Consequently, $e_{R}\left(  \delta_{+}^{T}\left(  t_{0}\right)  ,t_{0}\right)
=\Phi_{A}\left(  \delta_{+}^{T}\left(  t_{0}\right)  ,t_{0}\right)  $ has no
eigenvalue $1$. Then, we can conclude by Theorem \ref{thm3}, (\ref{8}) has no
periodic solution in shifts. The proof is complete.
\end{proof}

\begin{example}
\label{examp1}Consider the time scale $\mathbb{T}=\overline{q^{\mathbb{Z}}}$
that is $q$- periodic in shifts $\delta_{\pm}\left(  s,t\right)  =s^{\pm1}t$
associated with the initial point $t_{0}=1$. Let us define the matrix function
$A\left(  t\right)  :\mathbb{T}^{\ast}\mathbb{\rightarrow R}^{n\times n}$ as
follows%
\[
A\left(  t\right)  =\left[
\begin{array}
[c]{cc}%
\frac{1}{t} & 0\\
0 & \frac{1}{t}%
\end{array}
\right]  .
\]
Then%
\[
A\left(  \delta_{+}^{q}\left(  t\right)  \right)  \delta_{+}^{\Delta q}\left(
t\right)  =\left[
\begin{array}
[c]{cc}%
\frac{1}{qt} & 0\\
0 & \frac{1}{qt}%
\end{array}
\right]  \times q=\left[
\begin{array}
[c]{cc}%
\frac{1}{t} & 0\\
0 & \frac{1}{t}%
\end{array}
\right]  =A\left(  t\right)  ,
\]
which shows that $A$ is $\Delta$-periodic in shifts with period $q$. \newline
Consider the system%
\[
x^{\Delta}\left(  t\right)  =\left[
\begin{array}
[c]{cc}%
\frac{1}{t} & 0\\
0 & \frac{1}{t}%
\end{array}
\right]  x\left(  t\right)  ,
\]
with the transition matrix $\Phi_{A}\left(  t,1\right)  $ given by%
\[
\Phi_{A}\left(  t,1\right)  =\left[
\begin{array}
[c]{cc}%
e_{1/t}(t,1) & 0\\
0 & e_{1/t}(t,1)
\end{array}
\right]  ,
\]
where $q$-exponential function defined as%
\[
e_{p}(t,t_{0})=%
%TCIMACRO{\dprod \limits_{s\in\lbrack t_{0},t)}}%
%BeginExpansion
{\displaystyle\prod\limits_{s\in\lbrack t_{0},t)}}
%EndExpansion
\left[  1+(q-1)sp(s)\right]  .
\]
By (\ref{2.1}), we get%
\[
\Phi_{A}\left(  \delta_{+}^{q}\left(  t\right)  ,\delta_{+}^{q}\left(
1\right)  \right)  =\Phi_{A}\left(  t,1\right)
\]
and%
\[
\Phi_{A}\left(  \delta_{+}^{q}\left(  1\right)  ,1\right)  =\Phi_{A}\left(
q,1\right)  =\left[
\begin{array}
[c]{cc}%
q & 0\\
0 & q
\end{array}
\right]  .
\]
Now, as in Theorem \ref{thm4.1} we have%
\[
e_{R}\left(  q,1\right)  =\Phi_{A}\left(  q,1\right)  =\left[
\begin{array}
[c]{cc}%
q & 0\\
0 & q
\end{array}
\right]  =M.
\]
Then $R\left(  t\right)  $ in the Floquet decomposition is given by%
\begin{align*}
R\left(  t\right)   &  =\frac{1}{qt-t}\left[  M^{\frac{1}{q}\left(
\Theta\left(  qt\right)  -\Theta\left(  t\right)  \right)  }-I\right] \\
&  =\frac{1}{\left(  q-1\right)  t}\left[  M^{\frac{1}{q}\times q}-I\right] \\
&  =\frac{1}{\left(  q-1\right)  t}\left[  M-I\right] \\
&  =%
\begin{bmatrix}
\frac{q-1}{\left(  q-1\right)  t} & 0\\
0 & \frac{q-1}{\left(  q-1\right)  t}%
\end{bmatrix}
=%
\begin{bmatrix}
\frac{1}{t} & 0\\
0 & \frac{1}{t}%
\end{bmatrix}
.
\end{align*}
By (\ref{4.3}), we have%
\begin{align*}
e_{R}\left(  t,1\right)   &  =M^{\frac{1}{q}\Theta\left(  t\right)  }\\
&  =M^{\frac{1}{q}\left[  \delta_{-}\left(  1,q\right)  +\ldots+\delta
_{-}\left(  t_{m\left(  t\right)  -1},t_{m\left(  t\right)  }\right)  \right]
}\\
&  =M^{\frac{1}{q}qm\left(  t\right)  }=M^{m(t)}.
\end{align*}
Then, the matrix function $L$ which is $q$-periodic in shifts is obtained as
follows:%
\begin{align*}
L\left(  t\right)   &  =\Phi_{A}\left(  t,1\right)  e_{R}^{-1}\left(
t,1\right) \\
&  =\left[
\begin{array}
[c]{cc}%
t & 0\\
0 & t
\end{array}
\right]  \left[
\begin{array}
[c]{cc}%
q^{-m\left(  t\right)  } & 0\\
0 & q^{-m\left(  t\right)  }%
\end{array}
\right] \\
&  =\left[
\begin{array}
[c]{cc}%
t & 0\\
0 & t
\end{array}
\right]
\begin{bmatrix}
\frac{1}{t} & 0\\
0 & \frac{1}{t}%
\end{bmatrix}
=I
\end{align*}
since $q^{-m(t)}=q^{-n}=t^{-1}$ for $\mathbb{T}=\overline{q^{\mathbb{Z}}}$.
\end{example}

\begin{example}
\label{examp2}Suppose that $\mathbb{T}=\cup_{k=0}^{\infty}\left[  3^{\pm
k},2.3^{\pm k}\right]  \cup\left\{  0\right\}  $. Then, $\mathbb{T}$ is
$3$-periodic in shifts $\delta_{\pm}\left(  s,t\right)  =s^{\pm1}t$. If we set
$A\left(  t\right)  =1/t$, then we get
\[
A\left(  \delta_{\pm}\left(  3,t\right)  \right)  \delta_{\pm}^{\Delta}\left(
3,t\right)  =A\left(  3t\right)  3=\frac{1}{t}=A\left(  t\right)
\]
which shows that $A$ is $\Delta$-periodic in shifts with the period $3$.
Consider the system
\[
x^{\Delta}\left(  t\right)  =\left[
\begin{array}
[c]{cc}%
\frac{1}{t} & 0\\
0 & \frac{1}{t}%
\end{array}
\right]  x\left(  t\right)
\]
whose transition matrix is given by%
\[
\Phi_{A}\left(  t,1\right)  =\left[
\begin{array}
[c]{cc}%
e_{1/t}(t,1) & 0\\
0 & e_{1/t}(t,1)
\end{array}
\right]  .
\]
Then%
\[
\Phi_{A}\left(  \delta_{+}^{3}\left(  1\right)  ,1\right)  =\Phi_{A}\left(
3,1\right)  =\left[
\begin{array}
[c]{cc}%
e_{1/3}(3,1) & 0\\
0 & e_{1/3}(3,1)
\end{array}
\right]  .
\]
As in Theorem \ref{thm4.1}, we can write that%
\[
e_{R}\left(  3,1\right)  =\Phi_{A}\left(  3,1\right)  =\left[
\begin{array}
[c]{cc}%
e_{1/3}(3,1) & 0\\
0 & e_{1/3}(3,1)
\end{array}
\right]  =M.
\]
On the other hand, by (\ref{4.1.2}) and (\ref{4.3}) we have%
\begin{align*}
e_{R}\left(  t,1\right)   &  =M^{\frac{1}{3}\Theta\left(  t\right)  }\\
&  =\left\{
\begin{array}
[c]{cc}%
M^{\frac{1}{3}\left[  3m\left(  t\right)  -3^{m\left(  t\right)  }/t\right]  }
& \text{if }t\notin P\left(  1\right) \\
M^{\frac{1}{3}m\left(  t\right)  } & \text{if }t\in P\left(  1\right)
\end{array}
\right.  ,
\end{align*}
and%
\begin{align*}
R\left(  t\right)   &  =\lim_{s\rightarrow t}\frac{M^{\frac{1}{3}\left[
\Theta\left(  \sigma\left(  t\right)  \right)  -\Theta\left(  s\right)
\right]  }-I}{\sigma\left(  t\right)  -s}\\
&  =\left\{
\begin{array}
[c]{cc}%
\frac{2}{t}\left(  M^{\frac{1}{3}\left[  \Theta\left(  \frac{3}{2}t\right)
-\Theta\left(  t\right)  \right]  }-I\right)  & \text{if }\sigma\left(
t\right)  >t\\
\frac{1}{3}Log\left[  M\right]  & \text{if }\sigma\left(  t\right)  =t
\end{array}
\right.  ,
\end{align*}
where $P\left(  t\right)  $ and $m\left(  t\right)  $ are defined by
(\ref{P(t)}) and (\ref{m(t)}), respectively. Then we obtain the matrix
function $L\left(  t\right)  $ which is $3$-periodic in shifts as follows:
\begin{align*}
L\left(  t\right)   &  =\Phi_{A}\left(  t,1\right)  e_{R}^{-1}\left(
t,1\right) \\
&  =\left[
\begin{array}
[c]{cc}%
e_{1/t}(t,1) & 0\\
0 & e_{1/t}(t,1)
\end{array}
\right]  \left[
\begin{array}
[c]{cc}%
e_{1/3}(3,1) & 0\\
0 & e_{1/3}(3,1)
\end{array}
\right]  ^{-\frac{1}{3}\Theta\left(  t\right)  }.
\end{align*}

\end{example}

\begin{example}
Consider the time scale $\mathbb{T}=\mathbb{R}$ that is periodic in shifts
$\delta_{\pm}\left(  s,t\right)  =s^{\pm1}t$ associated with the initial point
$t_{0}=1$. Let us define the matrix function $A\left(  t\right)
:\mathbb{T}^{\ast}\mathbb{\rightarrow R}^{n\times n}$ as follows%
\[
A\left(  t\right)  =\left[
\begin{array}
[c]{cc}%
\frac{1}{t}\sin\left(  \pi\frac{\ln t}{\ln 2}\right)  & 0\\
0 & \frac{1}{t}\sin\left(  \pi\frac{\ln t}{\ln 2}\right)
\end{array}
\right]  .
\]
Then $A(t)$ is $\Delta$-periodic in shifts with the period $4.$ The following
system%
\[
x^{\Delta}\left(  t\right)  =\left[
\begin{array}
[c]{cc}%
\frac{1}{t}\sin\left(  \pi\frac{\ln t}{\ln 2}\right)  & 0\\
0 & \frac{1}{t}\sin\left(  \pi\frac{\ln t}{\ln 2}\right)
\end{array}
\right]  x\left(  t\right)
\]
has the transition matrix%
\[
\Phi_{A}\left(  t,1\right)  =\left[
\begin{array}
[c]{cc}%
e_{u(t)}(t,1) & 0\\
0 & e_{u(t)}(t,1)
\end{array}
\right]  ,
\]
where $u(t)=\frac{1}{t}\sin\left(  \pi\frac{\ln t}{\ln 2}\right)  $. Moreover,%
\[
\Phi_{A}\left(  \delta_{+}^{4}\left(  1\right)  ,1\right)  =\Phi_{A}\left(
4,1\right)  =\left[
\begin{array}
[c]{cc}%
1 & 0\\
0 & 1
\end{array}
\right]  =M.
\]
Thus, $R(t)$ is $2\times 2$ zero matrix, and hence, $e_{R}(t,1)=I$. Finally, the
matrix function $L(t)$ which is $4$-periodic in shifts is obtained as follows:%
\begin{align*}
L(t)  &  =\Phi_{A}\left(  t,1\right)  e_{R}^{-1}\left(  t,1\right) \\
&  =\Phi_{A}\left(  t,1\right)  .
\end{align*}

\end{example}

\subsection{Floquet multipliers and Floquet exponents}

In this section we investigate Floquet multipliers and exponents for the
system (\ref{4.1}). Let $\Phi_{A}\left(  t,t_{0}\right)  $ be the transition
matrix and $\Phi\left(  t\right)  $ the fundamental matrix at $t=\tau$ (i.e.
$\Phi\left(  \tau\right)  =I$) for the system (\ref{4.1}). Then, we can write
any fundamental matrix $\Psi\left(  t\right)  $ as follows
\begin{equation}
\Psi\left(  t\right)  =\Phi\left(  t\right)  \Psi\left(  \tau\right)  \text{
or }\Psi\left(  t\right)  =\Phi_{A}\left(  t,t_{0}\right)  \Psi\left(
t_{0}\right)  . \label{10}%
\end{equation}

\begin{definition}
\label{def4.2}Let $x_{0}\in\mathbb{R}^{n}$ be a nonzero vector and
$\Psi\left(  t\right)  $ be any fundamental matrix for the linear dynamic
system (\ref{4.1}). The vector solution of the system with initial condition
$x\left(  t_{0}\right)  =x_{0}$ is given by $\Phi_{A}\left(  t,t_{0}\right)
x_{0}$. We define the monodromy operator $M:\mathbb{R}^{n}\rightarrow
\mathbb{R}^{n}$ as follows:%
\begin{equation}
M\left(  x_{0}\right)  :=\Phi_{A}\left(  \delta_{+}^{T}\left(  t_{0}\right)
,t_{0}\right)  x_{0}=\Psi\left(  \delta_{+}^{T}\left(  t_{0}\right)  \right)
\Psi^{-1}\left(  t_{0}\right)  x_{0.} \label{11}%
\end{equation}
The eigenvalues of the monodromy operator are called Floquet multipliers of
the linear system (\ref{4.1}).
\end{definition}

Similar to \cite[Theorem 5.2 (i)]{[3]} we can give the following result:

\begin{remark}
\label{rem multipliers}The monodromy operator of the linear system (\ref{4.1})
is invertible. In particular, every characteristic multiplier is nonzero.
\end{remark}

\begin{theorem}
\label{thm6}The monodromy operator $M$ corresponding to different fundamental
matrices of the system (\ref{4.1}) is unique.
\end{theorem}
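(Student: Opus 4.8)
The plan is to exploit the fact that every fundamental matrix of (\ref{4.1}) is obtained from any fixed one by right multiplication with a constant invertible matrix, so that the factor $\Psi\left(\delta_{+}^{T}\left(t_{0}\right)\right)\Psi^{-1}\left(t_{0}\right)$ appearing in (\ref{11}) turns out to be independent of the particular $\Psi$ chosen, and in fact always equals $\Phi_{A}\left(\delta_{+}^{T}\left(t_{0}\right),t_{0}\right)$.

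First I would recall from (\ref{10}) that any fundamental matrix $\Psi$ satisfies $\Psi\left(t\right)=\Phi_{A}\left(t,t_{0}\right)\Psi\left(t_{0}\right)$, where $\Psi\left(t_{0}\right)$ is a constant nonsingular matrix. Given two fundamental matrices $\Psi$ and $\widetilde{\Psi}$, I would set the constant matrix $C:=\Psi^{-1}\left(t_{0}\right)\widetilde{\Psi}\left(t_{0}\right)$ and observe that
\[
\widetilde{\Psi}\left(t\right)=\Phi_{A}\left(t,t_{0}\right)\widetilde{\Psi}\left(t_{0}\right)=\Phi_{A}\left(t,t_{0}\right)\Psi\left(t_{0}\right)\Psi^{-1}\left(t_{0}\right)\widetilde{\Psi}\left(t_{0}\right)=\Psi\left(t\right)C\text{ for all }t\in\mathbb{T}^{\ast}.
\]
Since $\Psi\left(t_{0}\right)$ and $\widetilde{\Psi}\left(t_{0}\right)$ are both invertible, so is $C$.

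Next I would substitute this relation into the monodromy operator associated with $\widetilde{\Psi}$. Denoting by $\widetilde{M}\left(x_{0}\right)=\widetilde{\Psi}\left(\delta_{+}^{T}\left(t_{0}\right)\right)\widetilde{\Psi}^{-1}\left(t_{0}\right)x_{0}$ the operator built from $\widetilde{\Psi}$, I would compute
\[
\widetilde{\Psi}\left(\delta_{+}^{T}\left(t_{0}\right)\right)\widetilde{\Psi}^{-1}\left(t_{0}\right)=\Psi\left(\delta_{+}^{T}\left(t_{0}\right)\right)CC^{-1}\Psi^{-1}\left(t_{0}\right)=\Psi\left(\delta_{+}^{T}\left(t_{0}\right)\right)\Psi^{-1}\left(t_{0}\right),
\]
so the constant factor $C$ cancels. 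This yields $\widetilde{M}=M$, showing that the monodromy operator does not depend on the fundamental matrix used to define it, and that in every case it coincides with $\Phi_{A}\left(\delta_{+}^{T}\left(t_{0}\right),t_{0}\right)$ as already recorded in (\ref{11}).

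The argument is essentially routine; the only point requiring care is the first step, namely justifying that any two fundamental matrices differ by a constant right factor. I expect this to be the main (though minor) obstacle, but it follows at once from the representation (\ref{10}), which itself rests on the uniqueness of solutions of the matrix initial value problem on time scales recorded in Definition \ref{def2.2}.
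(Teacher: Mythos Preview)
Your proposal is correct and follows essentially the same approach as the paper: both arguments use (\ref{10}) to show that any two fundamental matrices differ by a constant invertible right factor, which then cancels in the product $\Psi(\delta_{+}^{T}(t_{0}))\Psi^{-1}(t_{0})$. Your presentation is in fact slightly cleaner, since you make the constant matrix $C$ explicit and appeal directly to the second form of (\ref{10}), whereas the paper's proof implicitly uses the first form with $\Psi_{1}$ playing the role of the normalized fundamental matrix $\Phi$.
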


\begin{proof}
Suppose that $M_{1}$ and $M_{2}$ are the monodromy operators corresponding to
fundamental matrices $\Psi_{1}\left(  t\right)  $ and $\Psi_{2}\left(
t\right)  ,$ respectively. By using Definition \ref{def4.2}, we can express
the monodromy operator $M_{2}\left(  x_{0}\right)  $ corresponding to
$\Psi_{2}\left(  t\right)  $ as
\[
M_{2}\left(  x_{0}\right)  =\Psi_{2}\left(  \delta_{+}^{T}\left(
t_{0}\right)  \right)  \Psi_{2}^{-1}\left(  t_{0}\right)  x_{0}.
\]
Using (\ref{10}), we get
\begin{align*}
M_{2}\left(  x_{0}\right)   &  =\Psi_{2}\left(  \delta_{+}^{T}\left(
t_{0}\right)  \right)  \Psi_{2}^{-1}\left(  t_{0}\right)  x_{0}\\
&  =\Psi_{1}\left(  \delta_{+}^{T}\left(  t_{0}\right)  \right)  \Psi
_{2}\left(  \tau\right)  \Psi_{2}^{-1}\left(  \tau\right)  \Psi_{1}%
^{-1}\left(  t_{0}\right)  x_{0}\\
&  =\Psi_{1}\left(  \delta_{+}^{T}\left(  t_{0}\right)  \right)  \Psi_{1}%
^{-1}\left(  t_{0}\right)  x_{0}\\
&  =M_{1}\left(  x_{0}\right)  .
\end{align*}
The proof is complete.
\end{proof}

By using Theorem \ref{thm1}, (\ref{10}) and (\ref{11}), we obtain%
\begin{equation}
\Phi_{A}\left(  t,t_{0}\right)  =\Psi_{1}\left(  t\right)  \Psi_{1}%
^{-1}\left(  t_{0}\right)  =L\left(  t\right)  e_{R}\left(  t,t_{0}\right)
L^{-1}\left(  t_{0}\right)  \label{12}%
\end{equation}
and%
\begin{equation}
M\left(  x_{0}\right)  =\Phi_{A}\left(  \delta_{+}^{T}\left(  t_{0}\right)
,t_{0}\right)  x_{0}=\Psi_{1}\left(  \delta_{+}^{T}\left(  t_{0}\right)
\right)  \Psi_{1}^{-1}\left(  t_{0}\right)  x_{0}. \label{13}%
\end{equation}
If we combine (\ref{12}) and (\ref{13}), we get%
\[
\Phi_{A}\left(  \delta_{+}^{T}\left(  t_{0}\right)  ,t_{0}\right)  =\Psi
_{1}\left(  \delta_{+}^{T}\left(  t_{0}\right)  \right)  \Psi_{1}^{-1}\left(
t_{0}\right)  =L\left(  \delta_{+}^{T}\left(  t_{0}\right)  \right)
e_{R}\left(  \delta_{+}^{T}\left(  t_{0}\right)  ,t_{0}\right)  L^{-1}\left(
\delta_{+}^{T}\left(  t_{0}\right)  \right)  .
\]
By using the periodicity in shifts of $L,$ we have%
\begin{equation}
\Phi_{A}\left(  \delta_{+}^{T}\left(  t_{0}\right)  ,t_{0}\right)  =L\left(
t_{0}\right)  e_{R}\left(  \delta_{+}^{T}\left(  t_{0}\right)  ,t_{0}\right)
L^{-1}\left(  t_{0}\right)  . \label{14}%
\end{equation}
Hence, we arrive at the next result:

\begin{corollary}
\label{rem4.4}The Floquet multipliers of the system (\ref{4.1}) are the
eigenvalues of the matrix $e_{R}\left(  \delta_{+}^{T}\left(  t_{0}\right)
,t_{0}\right)  $.
\end{corollary}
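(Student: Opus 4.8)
The plan is to read off the result directly from the similarity relation already recorded in equation (\ref{14}). By Definition \ref{def4.2}, the Floquet multipliers of (\ref{4.1}) are precisely the eigenvalues of the monodromy operator $M$, and $M$ acts as the matrix $\Phi_{A}\left(\delta_{+}^{T}\left(t_{0}\right),t_{0}\right)$. So the entire task reduces to comparing the spectrum of $\Phi_{A}\left(\delta_{+}^{T}\left(t_{0}\right),t_{0}\right)$ with that of $e_{R}\left(\delta_{+}^{T}\left(t_{0}\right),t_{0}\right)$.

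First I would invoke (\ref{14}), which expresses
\[
\Phi_{A}\left(\delta_{+}^{T}\left(t_{0}\right),t_{0}\right)=L\left(t_{0}\right)e_{R}\left(\delta_{+}^{T}\left(t_{0}\right),t_{0}\right)L^{-1}\left(t_{0}\right).
\]
The matrix $L\left(t_{0}\right)$ is invertible by Theorem \ref{thm1}, where $L$ is constructed as an invertible, periodic-in-shifts factor of the Floquet decomposition. Hence the two matrices $\Phi_{A}\left(\delta_{+}^{T}\left(t_{0}\right),t_{0}\right)$ and $e_{R}\left(\delta_{+}^{T}\left(t_{0}\right),t_{0}\right)$ are similar.

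The key step is then the standard fact that similar matrices share the same characteristic polynomial, and therefore the same eigenvalues: if $B=SAS^{-1}$ with $S$ invertible, then $\det\left(\lambda I-B\right)=\det\left(S\left(\lambda I-A\right)S^{-1}\right)=\det\left(\lambda I-A\right)$. Applying this with $S=L\left(t_{0}\right)$ shows that the spectrum of the monodromy operator coincides with the spectrum of $e_{R}\left(\delta_{+}^{T}\left(t_{0}\right),t_{0}\right)$. Combining this with the definition of Floquet multipliers yields the claim.

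I do not expect a genuine obstacle here, since all of the substantive work was already carried out in deriving (\ref{14}) from the Floquet decomposition and the periodicity in shifts of $L$; the corollary is essentially a one-line consequence of similarity invariance of eigenvalues. The only point worth stating explicitly is the invertibility of $L\left(t_{0}\right)$, which legitimizes the similarity and is guaranteed by Theorem \ref{thm1}.
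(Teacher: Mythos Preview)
Your proposal is correct and follows exactly the paper's approach: the corollary is stated immediately after equation (\ref{14}) with the phrase ``Hence, we arrive at the next result,'' so the paper itself treats it as an immediate consequence of the similarity relation in (\ref{14}). Your explicit mention of the invertibility of $L(t_{0})$ and the similarity-invariance of eigenvalues simply spells out what the paper leaves implicit.
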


\begin{definition}
[Floquet exponent]\label{def4.3}The Floquet exponent of the system (\ref{4.1})
is the function $\gamma\left(  t\right)  $ satisfying the equation%
\[
e_{\gamma}\left(  \delta_{+}^{T}\left(  t_{0}\right)  ,t_{0}\right)
=\lambda,
\]
where $\lambda$ is the Floquet multiplier of the system.
\end{definition}

\begin{definition}
\label{def4.4}\cite[Definition 2.4]{[1]} Let $\frac{-\pi}{h}<\omega\leq
\frac{\pi}{h}.$ Then Hilger purely imaginary number $\overset{\circ}{\imath}\omega$
is defined by $\overset{\circ}{\imath}\omega=\frac{e^{iwh}-1}{h}.$ For
$z\in\mathbb{C}_{h},$ we have that $\overset{\circ}{\imath}\operatorname{Im}%
_{h}\left(  z\right)  \in\mathbb{I}_{h}.$ Also, when $h=0,$ $\overset{\circ}{\imath}\omega=i\omega.$
\end{definition}

\begin{theorem}
\label{thm7}Suppose that $\gamma\left(  t\right)  \in\mathcal{R}$ is a Floquet
exponent of the system (\ref{4.1}) satisfying $e_{\gamma}\left(  \delta
_{+}^{T}\left(  t_{0}\right)  ,t_{0}\right)  =\lambda$, where $\lambda$ is
corresponding Floquet multiplier of the $T$-periodic system. Then
$\gamma\left(  t\right)  \oplus\overset{\circ}{\imath}\frac{2\pi k}{\delta
_{+}^{T}\left(  t_{0}\right)  -t_{0}}$ is also a Floquet exponent for
(\ref{4.1}) for all $k\in\mathbb{Z}$.
\end{theorem}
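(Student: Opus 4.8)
The plan is to reduce the statement to a single explicit evaluation by means of the multiplicative (semigroup) property of the time scale exponential. Writing $\omega_{k}:=\frac{2\pi k}{\delta_{+}^{T}(t_{0})-t_{0}}$, what must be shown is that $\gamma\oplus\overset{\circ}{\imath}\omega_{k}$ is again a Floquet exponent, i.e. (by Definition \ref{def4.3}) that $e_{\gamma\oplus\overset{\circ}{\imath}\omega_{k}}(\delta_{+}^{T}(t_{0}),t_{0})$ equals a Floquet multiplier. By the scalar form of part 5 of Theorem \ref{thm2.1}, the exponential of a $\oplus$-sum factors, so
\[
e_{\gamma\oplus\overset{\circ}{\imath}\omega_{k}}\left(\delta_{+}^{T}(t_{0}),t_{0}\right)=e_{\gamma}\left(\delta_{+}^{T}(t_{0}),t_{0}\right)\,e_{\overset{\circ}{\imath}\omega_{k}}\left(\delta_{+}^{T}(t_{0}),t_{0}\right)=\lambda\cdot e_{\overset{\circ}{\imath}\omega_{k}}\left(\delta_{+}^{T}(t_{0}),t_{0}\right).
\]
Thus the whole theorem collapses to verifying that the second factor equals $1$; the associated multiplier is then the very same $\lambda$.

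For that key evaluation I would compute $e_{\overset{\circ}{\imath}\omega_{k}}(\delta_{+}^{T}(t_{0}),t_{0})$ from the integral representation of the exponential. By Definition \ref{def4.4}, at each point $z$ the Hilger purely imaginary number satisfies $1+\mu(z)\overset{\circ}{\imath}\omega_{k}=e^{i\omega_{k}\mu(z)}$, so the integrand simplifies to a constant:
\[
\frac{1}{\mu(z)}\,\mathrm{Log}\!\left(1+\mu(z)\overset{\circ}{\imath}\omega_{k}\right)=\frac{1}{\mu(z)}\,\mathrm{Log}\!\left(e^{i\omega_{k}\mu(z)}\right)=i\omega_{k},
\]
with the right-dense case $\mu(z)=0$ returning integrand $\overset{\circ}{\imath}\omega_{k}=i\omega_{k}$ by the last clause of Definition \ref{def4.4}. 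Since $\int_{t_{0}}^{\delta_{+}^{T}(t_{0})}\Delta z=\delta_{+}^{T}(t_{0})-t_{0}$, this gives
\[
e_{\overset{\circ}{\imath}\omega_{k}}\left(\delta_{+}^{T}(t_{0}),t_{0}\right)=\exp\!\left(\int_{t_{0}}^{\delta_{+}^{T}(t_{0})}i\omega_{k}\,\Delta z\right)=\exp\!\left(i\omega_{k}\big(\delta_{+}^{T}(t_{0})-t_{0}\big)\right)=e^{2\pi i k}=1,
\]
by the choice of $\omega_{k}$. Substituting back yields $e_{\gamma\oplus\overset{\circ}{\imath}\omega_{k}}(\delta_{+}^{T}(t_{0}),t_{0})=\lambda$, so $\gamma\oplus\overset{\circ}{\imath}\omega_{k}$ is a Floquet exponent belonging to the same multiplier, for every $k\in\mathbb{Z}$.

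Two technical points must be checked to make this rigorous. First, admissibility: one needs $\gamma\oplus\overset{\circ}{\imath}\omega_{k}\in\mathcal{R}$, which holds because $1+\mu(z)\overset{\circ}{\imath}\omega_{k}=e^{i\omega_{k}\mu(z)}\neq0$ and regressivity is preserved under $\oplus$. The step I expect to demand the most care is the identity $\mathrm{Log}(e^{i\omega_{k}\mu(z)})=i\omega_{k}\mu(z)$, which is literally valid only in the principal strip $-\pi<\omega_{k}\mu(z)\le\pi$, precisely the range $-\pi/\mu(z)<\omega_{k}\le\pi/\mu(z)$ built into Definition \ref{def4.4}. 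When this range fails for large $|k|$ relative to $\mu(z)$, I would avoid the logarithm altogether and instead iterate part (ii) of \cite[Theorem 2.36]{[1]}, namely $e_{\overset{\circ}{\imath}\omega_{k}}(\sigma(z),t_{0})=(1+\mu(z)\overset{\circ}{\imath}\omega_{k})\,e_{\overset{\circ}{\imath}\omega_{k}}(z,t_{0})=e^{i\omega_{k}\mu(z)}e_{\overset{\circ}{\imath}\omega_{k}}(z,t_{0})$ over the scattered part, combined with the dynamic equation on the dense part, so that the total accumulated phase from $t_{0}$ to $\delta_{+}^{T}(t_{0})$ is still $i\omega_{k}(\delta_{+}^{T}(t_{0})-t_{0})=2\pi i k$; exponentiation then again returns $1$, completing the argument on arbitrary hybrid time scales.
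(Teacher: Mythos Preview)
Your argument is correct and follows essentially the same route as the paper: factor $e_{\gamma\oplus\overset{\circ}{\imath}\omega_{k}}=e_{\gamma}\,e_{\overset{\circ}{\imath}\omega_{k}}$, then use $1+\mu\overset{\circ}{\imath}\omega_{k}=e^{i\omega_{k}\mu}$ to reduce the integrand to the constant $i\omega_{k}$ and obtain $e^{2\pi ik}=1$. If anything, your version is more careful than the paper's, since you explicitly check regressivity and flag the principal-branch issue in $\mathrm{Log}(e^{i\omega_{k}\mu(z)})=i\omega_{k}\mu(z)$, which the paper's proof simply takes for granted.
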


\begin{proof}
For all $k\in\mathbb{Z}$ and any $t_{0}\in\mathbb{T}^{\ast}$ we have%
\begin{align*}
e_{\gamma\oplus\overset{\circ}{\imath}\frac{2\pi k}{\delta_{+}^{T}\left(
t_{0}\right)  -t_{0}}}\left(  \delta_{+}^{T}\left(  t_{0}\right)
,t_{0}\right)   &  =e_{\gamma}\left(  \delta_{+}^{T}\left(  t_{0}\right)
,t_{0}\right)  e_{\overset{\circ}{\imath}\frac{2\pi k}{\delta_{+}^{T}\left(
t_{0}\right)  -t_{0}}}\left(  \delta_{+}^{T}\left(  t_{0}\right)
,t_{0}\right) \\
&  =e_{\gamma}\left(  \delta_{+}^{T}\left(  t_{0}\right)  ,t_{0}\right)
\exp\left(
%TCIMACRO{\dint \limits_{t_{0}}^{\delta_{+}^{T}\left(  t_{0}\right)  }}%
%BeginExpansion
{\displaystyle\int\limits_{t_{0}}^{\delta_{+}^{T}\left(  t_{0}\right)  }}
%EndExpansion
\frac{\log\left(  1+\mu\left(  \tau\right)  \overset{\circ}{\imath}\frac{2\pi
k}{\delta_{+}^{T}\left(  t_{0}\right)  -t_{0}}\right)  }{\mu\left(
\tau\right)  }\Delta\tau\right) \\
&  =e_{\gamma}\left(  \delta_{+}^{T}\left(  t_{0}\right)  ,t_{0}\right)
\exp\left(
%TCIMACRO{\dint \limits_{t_{0}}^{\delta_{+}^{T}\left(  t_{0}\right)  }}%
%BeginExpansion
{\displaystyle\int\limits_{t_{0}}^{\delta_{+}^{T}\left(  t_{0}\right)  }}
%EndExpansion
\frac{\log\left(  \exp\left(  i\frac{2\pi k\mu\left(  \tau\right)  }%
{\delta_{+}^{T}\left(  t_{0}\right)  -t_{0}}\right)  \right)  }{\mu\left(
\tau\right)  }\Delta\tau\right) \\
&  =e_{\gamma}\left(  \delta_{+}^{T}\left(  t_{0}\right)  ,t_{0}\right)
\exp\left(
%TCIMACRO{\dint \limits_{t_{0}}^{\delta_{+}^{T}\left(  t_{0}\right)  }}%
%BeginExpansion
{\displaystyle\int\limits_{t_{0}}^{\delta_{+}^{T}\left(  t_{0}\right)  }}
%EndExpansion
\frac{i2\pi k}{\delta_{+}^{T}\left(  t_{0}\right)  -t_{0}}\Delta\tau\right) \\
&  =e_{\gamma}\left(  \delta_{+}^{T}\left(  t_{0}\right)  ,t_{0}\right)
e^{i2\pi k}\\
&  =e_{\gamma}\left(  \delta_{+}^{T}\left(  t_{0}\right)  ,t_{0}\right)  ,
\end{align*}
which gives the desired result.
\end{proof}

The next result can be proven similar to \cite[Theorem 5.3]{[3]}.

\begin{theorem}
\label{thm8} Let $R\left(  t\right)  $ be a matrix function as in Theorem
\ref{thm4.1}, with eigenvalues $\gamma_{1}\left(  t\right)  ,\ldots,\gamma
_{n}\left(  t\right)  $ repeated according to multiplicities. Then $\gamma
_{1}^{k}\left(  t\right)  ,\ldots,\gamma_{n}^{k}\left(  t\right)  $ are the
eigenvalues of $R^{k}\left(  t\right)  $ and eigenvalues of $e_{R}$ are
$e_{\gamma_{1}},\ldots,e_{\gamma_{n}}$.
\end{theorem}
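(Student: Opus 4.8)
The plan is to exploit the explicit construction of $R$ from the \emph{constant} matrix $M$ in Theorem \ref{thm4.1} and Corollary \ref{cor1} in order to replace $R(t)$ by an upper-triangular matrix through a \emph{time-independent} similarity, and then to treat $e_R$ through the uniqueness clause of Definition \ref{def2.2} rather than any closed form. Concretely, since $M$ is a constant nonsingular matrix I would fix a Jordan decomposition $M=SJS^{-1}$ with $S$ constant and $J$ block diagonal. Because the real power $M^{r}$ of $(\ref{real power of matrix})$ is a primary matrix function, it commutes with similarity, so $M^{r}=SJ^{r}S^{-1}$ where each Jordan block $J_{i}$ with eigenvalue $\lambda_{i}$ is sent to an upper-triangular block $J_{i}^{r}$ carrying $\lambda_{i}^{r}$ on its diagonal. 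Feeding this through the difference quotient $(\ref{4.1.2})$ and passing to the limit yields $R(t)=S\hat{R}(t)S^{-1}$ with the \emph{same} constant $S$ and with $\hat{R}(t)$ upper triangular whose diagonal is exactly the list $\gamma_{1}(t),\ldots,\gamma_{n}(t)$ from Corollary \ref{cor1}, each $\gamma_{i}$ inheriting the algebraic multiplicity of $\lambda_{i}$.

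For the first assertion this is now immediate: $R^{k}(t)=S\hat{R}^{k}(t)S^{-1}$, and $\hat{R}^{k}(t)$ is upper triangular with diagonal entries $\gamma_{i}^{k}(t)$; since eigenvalues are invariant under similarity and coincide with the diagonal of a triangular matrix, the eigenvalues of $R^{k}(t)$ are $\gamma_{1}^{k}(t),\ldots,\gamma_{n}^{k}(t)$ with multiplicities. For the second assertion I would first show, using the same constant $S$, that $e_{R}(t,t_{0})=S\,e_{\hat{R}}(t,t_{0})\,S^{-1}$: setting $\hat{Y}=S^{-1}e_{R}(\cdot,t_{0})S$ gives $\hat{Y}^{\Delta}=S^{-1}R\,e_{R}\,S=\hat{R}\hat{Y}$ and $\hat{Y}(t_{0})=I$, so by uniqueness in Definition \ref{def2.2} we get $\hat{Y}=e_{\hat{R}}(\cdot,t_{0})$. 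Next I claim that $e_{\hat{R}}(t,t_{0})$ is upper triangular with diagonal $e_{\gamma_{1}}(t,t_{0}),\ldots,e_{\gamma_{n}}(t,t_{0})$. Indeed the upper-triangular matrices are invariant for the problem $Y^{\Delta}=\hat{R}Y$, $Y(t_{0})=I$: if $Y$ is upper triangular then for $i>j$ the entry $(\hat{R}Y)_{ij}=\sum_{k}\hat{R}_{ik}y_{kj}$ vanishes, because a nonzero term would force $i\le k\le j$; hence the unique solution stays upper triangular. On the diagonal one has $(\hat{R}Y)_{ii}=\gamma_{i}(t)y_{ii}$, so $y_{ii}^{\Delta}=\gamma_{i}(t)y_{ii}$ with $y_{ii}(t_{0})=1$, giving $y_{ii}=e_{\gamma_{i}}(\cdot,t_{0})$. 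As $e_{R}$ is similar to the triangular $e_{\hat{R}}$ via the constant $S$, its eigenvalues are precisely $e_{\gamma_{1}},\ldots,e_{\gamma_{n}}$.

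The hard part will be the second assertion, because on a general time scale $e_{R}$ is not a polynomial or analytic function of $R$ (it need not even commute with $R$), so a spectral mapping theorem cannot be applied to $e_{R}$ directly. The decisive leverage is that Corollary \ref{cor1}, together with the primary-matrix-function form of the real power, furnishes a \emph{single, $t$-independent} matrix $S$ that triangularizes $R(t)$ simultaneously for all $t$; this is exactly what permits the matrix exponential to descend to triangular coordinates and be solved diagonal-wise by uniqueness. I would take care to emphasize that $S$ must not depend on $t$ (it originates from the constant matrix $M$), since a $t$-dependent transformation would not commute with the delta derivative and the reduction $e_{R}=S\,e_{\hat{R}}\,S^{-1}$ would break down.
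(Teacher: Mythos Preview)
Your argument is correct and is essentially the approach the paper has in mind: the paper gives no self-contained proof but simply defers to \cite[Theorem 5.3]{[3]}. The mechanism there is exactly the one you describe --- a single \emph{constant} similarity $S$ coming from the Jordan form of the constant matrix $M=\Phi_{A}(\delta_{+}^{T}(t_{0}),t_{0})$ triangularizes $R(t)$ simultaneously for all $t$, after which both assertions follow from the triangular structure and the diagonal-wise uniqueness argument for $e_{\hat R}$.
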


\begin{lemma}
Let $\mathbb{T}$ be a time scale that is $p$-periodic in shifts $\delta_{\pm}$
associated with the initial point $t_{0}$ and $k\in\mathbb{Z}$. If
$\frac{\delta_{+}^{p}\left(  t\right)  -t}{\delta_{+}^{p}\left(  t_{0}\right)
-t_{0}}\in\mathbb{Z}$, then the functions $e_{\overset{\circ}{\imath}%
\frac{2\pi k}{\delta_{+}^{T}\left(  t_{0}\right)  -t_{0}}}$ and $e_{\ominus
\overset{\circ}{\imath}\frac{2\pi k}{\delta_{+}^{T}\left(  t_{0}\right)
-t_{0}}}$ are $p$ periodic in shifts.
\end{lemma}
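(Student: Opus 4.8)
The plan is to show periodicity in shifts directly by verifying the defining relation $e_{\psi}\!\left(\delta_{+}^{T}(t),t_0\right)=e_{\psi}\!\left(\delta_{+}^{T}\!\left(\delta_{+}^{p}(t)\right),t_0\right)$ for $\psi=\mathring{\imath}\frac{2\pi k}{\delta_{+}^{T}(t_0)-t_0}$, or equivalently to prove $e_{\psi}\!\left(\delta_{+}^{p}(t),t\right)=1$ so that the shift by $p$ leaves the exponential unchanged. The natural tool is the semigroup identity (v) from the exponential-function lemma, $e_{\psi}(t,s)e_{\psi}(s,r)=e_{\psi}(t,r)$, which reduces the claim to evaluating a single integral over the ``gap'' $\left[t,\delta_{+}^{p}(t)\right)$. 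I expect the hypothesis $\frac{\delta_{+}^{p}(t)-t}{\delta_{+}^{T}(t_0)-t_0}\in\mathbb{Z}$ to be exactly what makes this integral a multiple of $2\pi i$.

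First I would compute, using the definition of the exponential function together with the defining relation $\mu(\tau)\,\mathring{\imath}\omega=\exp(i\omega\mu(\tau))-1$ for the Hilger imaginary number (Definition \ref{def4.4}), the quotient
\[
e_{\mathring{\imath}\frac{2\pi k}{\delta_{+}^{T}(t_0)-t_0}}\!\left(\delta_{+}^{p}(t),t\right)=\exp\left(\int_{t}^{\delta_{+}^{p}(t)}\frac{\log\left(1+\mu(\tau)\,\mathring{\imath}\tfrac{2\pi k}{\delta_{+}^{T}(t_0)-t_0}\right)}{\mu(\tau)}\Delta\tau\right).
\]
As in the computation inside the proof of Theorem \ref{thm7}, the integrand collapses to the constant $\frac{i2\pi k}{\delta_{+}^{T}(t_0)-t_0}$, so the integral equals $\frac{i2\pi k\left(\delta_{+}^{p}(t)-t\right)}{\delta_{+}^{T}(t_0)-t_0}$. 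The hypothesis guarantees $\frac{\delta_{+}^{p}(t)-t}{\delta_{+}^{T}(t_0)-t_0}$ is an integer, whence this exponent is $2\pi i$ times an integer and the exponential is $1$. I would then combine this with the semigroup law to write
\[
e_{\psi}\!\left(\delta_{+}^{T}\!\left(\delta_{+}^{p}(t)\right),t_0\right)=e_{\psi}\!\left(\delta_{+}^{p}\!\left(\delta_{+}^{T}(t)\right),\delta_{+}^{T}(t)\right)e_{\psi}\!\left(\delta_{+}^{T}(t),t_0\right),
\]
using the commutativity $\delta_{+}^{T}\circ\delta_{+}^{p}=\delta_{+}^{p}\circ\delta_{+}^{T}$ (which follows from property 5 of Definition \ref{def3.1}); the first factor is again of the form ``exponential over a $p$-gap'' and equals $1$ by the same constant-integrand argument, so the whole expression reduces to $e_{\psi}\!\left(\delta_{+}^{T}(t),t_0\right)$, giving $T$-periodicity in shifts.

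The identical argument applies to $\ominus\psi$, since $\ominus$ of a Hilger imaginary number is again purely imaginary and the corresponding integrand is the constant $-\frac{i2\pi k}{\delta_{+}^{T}(t_0)-t_0}$, so its integral over a $p$-gap is again an integer multiple of $-2\pi i$ and exponentiates to $1$. \textbf{The main obstacle} I anticipate is technical rather than conceptual: one must confirm that the integrand is genuinely constant over the gap interval, which requires that the step-size contribution $\frac{\log(1+\mu(\tau)\psi)}{\mu(\tau)}$ be independent of $\tau$ even at right-scattered points. This works precisely because $\psi$ is a constant Hilger imaginary number, for which $1+\mu(\tau)\psi=\exp\!\left(i\mu(\tau)\frac{2\pi k}{\delta_{+}^{T}(t_0)-t_0}\right)$ has logarithm exactly $i\mu(\tau)\frac{2\pi k}{\delta_{+}^{T}(t_0)-t_0}$; care is needed that the principal branch of $\log$ is the one making this identity hold, which is ensured by the Hilger-imaginary normalization in Definition \ref{def4.4}.
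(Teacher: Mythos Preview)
Your core computation---that the integrand collapses to the constant $\dfrac{i2\pi k}{\delta_{+}^{T}(t_{0})-t_{0}}$ and hence $e_{\psi}\bigl(\delta_{+}^{p}(t),t\bigr)=1$ under the integer hypothesis---is correct and is exactly the heart of the paper's argument. But you have mis-stated what ``$p$-periodic in shifts'' requires and then taken an unnecessary detour to recover from it.

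The function in question is $t\mapsto e_{\psi}(t,t_{0})$; the $\delta_{+}^{T}$ appears only inside the constant $\psi$, not in the argument. So periodicity means $e_{\psi}\bigl(\delta_{+}^{p}(t),t_{0}\bigr)=e_{\psi}(t,t_{0})$, and once you know $e_{\psi}\bigl(\delta_{+}^{p}(t),t\bigr)=1$ the semigroup identity $e_{\psi}\bigl(\delta_{+}^{p}(t),t_{0}\bigr)=e_{\psi}\bigl(\delta_{+}^{p}(t),t\bigr)\,e_{\psi}(t,t_{0})$ finishes the job in one line. This is precisely what the paper does. Your formulation instead shifts the argument by $\delta_{+}^{T}$ first, which forces you to invoke a commutativity $\delta_{+}^{T}\circ\delta_{+}^{p}=\delta_{+}^{p}\circ\delta_{+}^{T}$; that claim is plausible on the standard examples but is \emph{not} property~5 of Definition~\ref{def3.1} (which mixes forward and backward shifts), so your citation is wrong and the step is at best unjustified here. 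It is also entirely avoidable.

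For $e_{\ominus\psi}$ the paper takes the shorter route: once $e_{\psi}(\cdot,t_{0})$ is $p$-periodic, the identity $e_{\ominus\psi}=1/e_{\psi}$ from the exponential-function lemma immediately gives the periodicity of the reciprocal, with no second integral computation needed. Your direct argument for $\ominus\psi$ is not wrong, just longer than necessary.
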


\begin{proof}
If $\frac{\delta_{+}^{p}\left(  t\right)  -t}{\delta_{+}^{p}\left(
t_{0}\right)  -t_{0}}\in\mathbb{Z}$, then we have%
\begin{align*}
e_{\overset{\circ}{\imath}\frac{2\pi k}{\delta_{+}^{T}\left(  t_{0}\right)
-t_{0}}}\left(  \delta_{+}^{p}\left(  t\right)  ,t_{0}\right)   &
=\exp\left(
%TCIMACRO{\dint \limits_{t_{0}}^{\delta_{+}^{p}\left(  t\right)  }}%
%BeginExpansion
{\displaystyle\int\limits_{t_{0}}^{\delta_{+}^{p}\left(  t\right)  }}
%EndExpansion
\frac{i2\pi k}{\delta_{+}^{p}\left(  t_{0}\right)  -t_{0}}\Delta\tau\right) \\
&  =\exp\left(
%TCIMACRO{\dint \limits_{t}^{\delta_{+}^{p}\left(  t\right)  }}%
%BeginExpansion
{\displaystyle\int\limits_{t}^{\delta_{+}^{p}\left(  t\right)  }}
%EndExpansion
\frac{i2\pi k}{\delta_{+}^{p}\left(  t_{0}\right)  -t_{0}}\Delta\tau\right)
\exp\left(
%TCIMACRO{\dint \limits_{t_{0}}^{t}}%
%BeginExpansion
{\displaystyle\int\limits_{t_{0}}^{t}}
%EndExpansion
\frac{i2\pi k}{\delta_{+}^{p}\left(  t_{0}\right)  -t_{0}}\Delta\tau\right) \\
&  =\exp\left(  i2\pi k\frac{\delta_{+}^{p}\left(  t\right)  -t}{\delta
_{+}^{p}\left(  t_{0}\right)  -t_{0}}\right)  \exp\left(
%TCIMACRO{\dint \limits_{t_{0}}^{t}}%
%BeginExpansion
{\displaystyle\int\limits_{t_{0}}^{t}}
%EndExpansion
\frac{i2\pi k}{\delta_{+}^{p}\left(  t_{0}\right)  -t_{0}}\Delta\tau\right) \\
&  =\exp\left(
%TCIMACRO{\dint \limits_{t_{0}}^{t}}%
%BeginExpansion
{\displaystyle\int\limits_{t_{0}}^{t}}
%EndExpansion
\frac{i2\pi k}{\delta_{+}^{p}\left(  t_{0}\right)  -t_{0}}\Delta\tau\right)
=e_{\overset{\circ}{\imath}\frac{2\pi k}{\delta_{+}^{T}\left(  t_{0}\right)
-t_{0}}}\left(  t,t_{0}\right)
\end{align*}
which proves the periodicity of $e_{\overset{\circ}{\imath}\frac{2\pi
k}{\delta_{+}^{T}\left(  t_{0}\right)  -t_{0}}}$. The periodicity of
$e_{\ominus\overset{\circ}{\imath}\frac{2\pi k}{\delta_{+}^{T}\left(
t_{0}\right)  -t_{0}}}$ can be proven by using the periodicity of
$e_{\overset{\circ}{\imath}\frac{2\pi k}{\delta_{+}^{T}\left(  t_{0}\right)
-t_{0}}}$ and the identity $e_{\ominus\alpha}=1/e_{\alpha}$.
\end{proof}

\begin{remark}
Note that the condition $\frac{\delta_{+}^{p}\left(  t\right)  -t}%
{\delta_{+}^{p}\left(  t_{0}\right)  -t_{0}}\in\mathbb{Z}$ holds not only for
all additive periodic time scales but also for the many time scales that are
periodic in shifts. For example for the two-periodic time scales
$\overline{2^{\mathbb{Z}}}$ and $\cup_{k=0}^{\infty}\left[  2^{\pm k}%
,2^{\pm(k+1)}\right]  \cup\left\{  0\right\}  $ in shifts $\delta_{\pm}\left(
s,t\right)  =s^{\pm1}t$ associated with the initial point $t_{0}=1$, the
condition $\frac{\delta_{+}^{p}\left(  t\right)  -t}{\delta_{+}^{p}\left(
t_{0}\right)  -t_{0}}\in\mathbb{Z}$ is always satisfied.
\end{remark}

\begin{theorem}
\label{thm9}If $\gamma\left(  t\right)  $ is a Floquet exponent for the system
(\ref{4.1}) and $\Phi_{A}\left(  t,t_{0}\right)  $ is the associated
transition matrix, then there exists a Floquet decomposition of the form%
\[
\Phi_{A}\left(  t,t_{0}\right)  =L\left(  t\right)  e_{R}\left(
t,t_{0}\right)
\]
such that $\gamma\left(  t\right)  $ is an eigenvalue of $R\left(  t\right)
.$
\end{theorem}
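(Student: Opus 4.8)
The plan is to begin with the canonical decomposition supplied by Theorem \ref{thm1} and then adjust it by a periodic-in-shifts gauge factor so that the prescribed exponent appears in $R$. First I would fix $M:=\Phi_A(\delta_+^T(t_0),t_0)$ and form the canonical Floquet decomposition $\Phi_A(t,t_0)=L_0(t)e_{R_0}(t,t_0)$ of Theorem \ref{thm1}, where $R_0$ is built as in Theorem \ref{thm4.1}. By Corollary \ref{cor1} the matrix $R_0(t)$ is assembled from the same constant spectral projections $P_i(M)$ as $M$, so $R_0(t)$ and $e_{R_0}(t,t_0)$ commute with every $P_i(M)$. Writing $\gamma_1(t),\dots,\gamma_n(t)$ for the eigenvalues of $R_0(t)$, Theorem \ref{thm8} and Corollary \ref{rem4.4} give that the Floquet multipliers are $\lambda_i=e_{\gamma_i}(\delta_+^T(t_0),t_0)$ and that these exhaust the eigenvalues of $M$.

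Next I would locate the given exponent among this data. Since $\gamma$ is a Floquet exponent, Definition \ref{def4.3} yields $e_\gamma(\delta_+^T(t_0),t_0)=\lambda$ for some multiplier $\lambda$, and as the multipliers are precisely the $\lambda_i$ we may fix an index $j$ with $\lambda=\lambda_j$, hence
\[
e_\gamma(\delta_+^T(t_0),t_0)=e_{\gamma_j}(\delta_+^T(t_0),t_0).
\]
By Theorem \ref{thm7}, the map sending an exponent to its multiplier is insensitive exactly to additions of $\overset{\circ}{\imath}\frac{2\pi k}{\delta_+^T(t_0)-t_0}$, so the equality above places $\gamma$ in the same class as $\gamma_j$; accordingly I would set $h:=\gamma\ominus\gamma_j$ and record that $e_h(\delta_+^T(t_0),t_0)=1$, with $h$ of the purely Hilger-imaginary type governed by the Lemma preceding this theorem.

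The construction then modifies the canonical decomposition only on the $j$-th spectral block. Using the constant projection $P_j(M)$ I would define
\[
e_{\tilde R}(t,t_0):=e_{R_0}(t,t_0)\bigl[I+(e_h(t,t_0)-1)P_j(M)\bigr],\qquad \tilde L(t):=L_0(t)\bigl[e_{\ominus h}(t,t_0)P_j(M)+(I-P_j(M))\bigr].
\]
Because all factors commute and $e_{\ominus h}=1/e_h$, one checks immediately that $\Phi_A(t,t_0)=\tilde L(t)e_{\tilde R}(t,t_0)$; moreover the boost acts cleanly on a single generalized eigenspace thanks to the orthogonality of $\{P_i(M)\}$ noted after Definition \ref{def real power}, so on the range of $P_j(M)$ the exponent of $e_{\tilde R}$ is $\gamma_j\oplus h=\gamma$, which makes $\gamma$ an eigenvalue of $\tilde R(t)$ while the remaining eigenvalues are unchanged.

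It remains to confirm that $\tilde L$ is a genuine Floquet factor, i.e.\ invertible, of class $C_{rd}^1$, and periodic in shifts. Invertibility and smoothness follow from those of $L_0$ and from $P_j(M)$ being a constant idempotent, while periodicity in shifts reduces, via the periodicity in shifts of $L_0$ from Theorem \ref{thm1} and the constancy of $P_j(M)$, to the periodicity in shifts of $e_{\ominus h}$. This last point is the main obstacle: it is supplied by the Lemma immediately preceding the theorem, which establishes that $e_{\ominus\overset{\circ}{\imath}\frac{2\pi k}{\delta_+^T(t_0)-t_0}}$ is periodic in shifts under the integrality condition $\frac{\delta_+^T(t)-t}{\delta_+^T(t_0)-t_0}\in\mathbb{Z}$. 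The delicate work is therefore to pin down $h$ as exactly such a Hilger-imaginary increment and to carry the integrality hypothesis through, after which $\Phi_A(t,t_0)=\tilde L(t)e_{\tilde R}(t,t_0)$ is the desired Floquet decomposition.
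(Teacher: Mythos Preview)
Your strategy is correct and reaches the goal, but it is considerably more elaborate than the paper's. After locating an eigenvalue $\tilde\gamma$ of the canonical $\tilde R$ with $e_{\tilde\gamma}(\delta_+^T(t_0),t_0)=\lambda=e_\gamma(\delta_+^T(t_0),t_0)$ and writing $\tilde\gamma=\gamma\oplus\overset{\circ}{\imath}\tfrac{2\pi k}{\delta_+^T(t_0)-t_0}$, the paper simply sets
\[
R(t):=\tilde R(t)\ominus \overset{\circ}{\imath}\tfrac{2\pi k}{\delta_+^T(t_0)-t_0}\,I,\qquad
L(t):=\tilde L(t)\,e_{\overset{\circ}{\imath}\frac{2\pi k}{\delta_+^T(t_0)-t_0}I}(t,t_0),
\]
a \emph{uniform} scalar shift of the whole matrix. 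Since a scalar multiple of $I$ commutes with everything, $L(t)e_R(t,t_0)=\tilde L(t)e_{\tilde R}(t,t_0)$ is immediate, and the eigenvalue $\tilde\gamma$ is carried to $\tilde\gamma\ominus\overset{\circ}{\imath}\tfrac{2\pi k}{\ldots}=\gamma$. Your construction instead modifies only the $j$-th spectral block via the projection $P_j(M)$; this does work (the commutation of $P_j(M)$ with $e_{R_0}$ follows from the explicit form $e_{R_0}(t,t_0)=M^{\Theta(t)/T}$), and it has the pleasant side effect of leaving the remaining eigenvalues untouched, but that extra precision is not needed for the statement and costs you the overhead of spectral projections and the verification that your $e_{\tilde R}$ really is a matrix exponential.

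One point to tighten: you invoke Theorem~\ref{thm7} to say the exponent-to-multiplier map is insensitive ``exactly'' to additions of $\overset{\circ}{\imath}\tfrac{2\pi k}{\delta_+^T(t_0)-t_0}$, but that theorem only gives one direction. The paper makes the same identification without further comment, so you are no worse off, but the word ``exactly'' overstates what has been proved. Your closing observation that the periodicity in shifts of the new $L$ hinges on the Lemma (and its integrality hypothesis) is well taken; the paper's proof passes over this verification entirely, so you have in fact been more careful here than the original.
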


\begin{proof}
Consider the Floquet decomposition $\Phi_{A}\left(  t,t_{0}\right)
=\widetilde{L}\left(  t\right)  e_{\widetilde{R}}\left(  t,t_{0}\right)  $. By
Definition \ref{def4.3}, there exists a characteristic multiplier $\lambda$
such that $e_{\gamma}\left(  \delta_{+}^{T}(t_{0}),t_{0}\right)  =\lambda$.
Moreover, there is an eigenvalue $\tilde{\gamma}\left(  t\right)  $ of
$\tilde{R}\left(  t\right)  $ so that $e_{\tilde{\gamma}}\left(  \delta
_{+}^{T}(t_{0}),t_{0}\right)  =\lambda$, where $\tilde{\gamma}\left(
t\right)  $ can be defined as
\[
\tilde{\gamma}\left(  t\right)  :=\gamma\left(  t\right)  \oplus\overset
{\circ}{\imath}\frac{2\pi k}{\delta_{+}^{T}\left(  t_{0}\right)  -t_{0}}%
\]
by Theorem \ref{thm7}. If we set%
\[
R\left(  t\right)  :=\widetilde{R}\left(  t\right)  \ominus\overset{\circ
}{\imath}\frac{2\pi k}{\delta_{+}^{T}\left(  t_{0}\right)  -t_{0}}I%
\]
and%
\[
L\left(  t\right)  :=\tilde{L}\left(  t\right)  e_{\overset{\circ}{\imath
}\frac{2\pi k}{\delta_{+}^{T}\left(  t_{0}\right)  -t_{0}}I}\left(
t,t_{0}\right)  ,
\]
then we can write%
\[
\widetilde{R}\left(  t\right)  :=R\left(  t\right)  \oplus\overset{\circ
}{\imath}\frac{2\pi k}{\delta_{+}^{T}\left(  t_{0}\right)  -t_{0}}I,
\]
and hence,%
\[
L\left(  t\right)  e_{R}\left(  t,t_{0}\right)  =\tilde{L}\left(  t\right)
e_{\overset{\circ}{\imath}\frac{2\pi k}{\delta_{+}^{T}\left(  t_{0}\right)
-t_{0}}I}\left(  t,t_{0}\right)  e_{R}\left(  t,t_{0}\right)  =\tilde
{L}\left(  t\right)  e_{\overset{\circ}{\imath}\frac{2\pi k}{\delta_{+}%
^{T}\left(  t_{0}\right)  -t_{0}}I\oplus R}\left(  t,t_{0}\right)  =\tilde
{L}\left(  t\right)  e_{\tilde{R}}\left(  t,t_{0}\right)  .
\]
This means $\Phi_{A}\left(  t,t_{0}\right)  =L\left(  t\right)  e_{R}\left(
t,t_{0}\right)  $ is another Floquet decomposition where $\gamma\left(
t\right)  $ is an eigenvalue of $R\left(  t\right)  .$
\end{proof}

\begin{theorem}
\label{thm10}Suppose that $\lambda$ is a characteristic multiplier of the
system (\ref{4.1}) and that $\gamma\left(  t\right)  $ is the corresponding
Floquet exponent. Then, (\ref{4.1}) has a nontrivial solution of the form%
\begin{equation}
x\left(  t\right)  =e_{\gamma}\left(  t,t_{0}\right)  q\left(  t\right)
\label{xq}%
\end{equation}
satisfying%
\[
x\left(  \delta_{+}^{T}\left(  t\right)  \right)  =\lambda x\left(  t\right)
,
\]
where $q$ is a $T$-periodic function in shifts.
\end{theorem}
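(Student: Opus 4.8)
The plan is to construct the solution $x(t)=e_\gamma(t,t_0)q(t)$ explicitly by leveraging the Floquet decomposition adapted to the given exponent $\gamma$, and then to verify the two required properties: that $x$ solves (\ref{4.1}) and that it satisfies the shift relation $x(\delta_+^T(t))=\lambda x(t)$. The key tool is Theorem \ref{thm9}, which guarantees a Floquet decomposition $\Phi_A(t,t_0)=L(t)e_R(t,t_0)$ in which $\gamma(t)$ is precisely an eigenvalue of $R(t)$. First I would invoke Theorem \ref{thm9} to fix such a decomposition, and then use Corollary \ref{cor1} (applied to the matrix $M=\Phi_A(\delta_+^T(t_0),t_0)$ that defines $R$) to obtain the eigenvector structure of $R(t)$.

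Next I would produce the solution itself. Let $v_0$ be an eigenvector of the monodromy matrix $e_R(\delta_+^T(t_0),t_0)$ corresponding to the Floquet multiplier $\lambda$; by Corollary \ref{rem4.4} this $\lambda$ is indeed an eigenvalue of that matrix. Set $x(t):=\Phi_A(t,t_0)v_0=L(t)e_R(t,t_0)v_0$. Since $v_0$ is an eigenvector of $R(t)$ (via Corollary \ref{cor1}, the eigenvectors of $R(t)$ coincide with those of $M$, and the scalarization reduces the matrix exponential to the scalar exponential $e_\gamma$ along the $v_0$-direction), one has $e_R(t,t_0)v_0=e_\gamma(t,t_0)v_0$. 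Defining $q(t):=e_\gamma(t,t_0)^{-1}x(t)=L(t)v_0$, the representation (\ref{xq}) follows immediately, and $x(t)=\Phi_A(t,t_0)v_0$ is a solution of (\ref{4.1}) by construction.

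It then remains to check the two claims about $q$ and the shift relation. For the shift relation, I would compute
\[
x\left(\delta_+^T(t)\right)=\Phi_A\left(\delta_+^T(t),t_0\right)v_0
=\Phi_A\left(\delta_+^T(t),\delta_+^T(t_0)\right)\Phi_A\left(\delta_+^T(t_0),t_0\right)v_0,
\]
and then use the uniqueness-up-to-period identity (\ref{2.1}), namely $\Phi_A(\delta_+^T(t),\delta_+^T(t_0))=\Phi_A(t,t_0)$, together with $\Phi_A(\delta_+^T(t_0),t_0)v_0=\lambda v_0$, to conclude $x(\delta_+^T(t))=\lambda\,\Phi_A(t,t_0)v_0=\lambda x(t)$. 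For the periodicity of $q$, since $q(t)=L(t)v_0$ and $L$ is $T$-periodic in shifts by Theorem \ref{thm1}, we get $q(\delta_+^T(t))=L(\delta_+^T(t))v_0=L(t)v_0=q(t)$, so $q$ is $T$-periodic in shifts as required.

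I expect the main obstacle to be the scalarization step—justifying carefully that $e_R(t,t_0)v_0=e_\gamma(t,t_0)v_0$. This requires the eigenvector $v_0$ of $R(t)$ to be \emph{independent of $t$}, so that $R(t)$ acts as the scalar $\gamma(t)$ along a fixed direction and the matrix dynamic IVP defining $e_R$ restricts to the scalar IVP $y^\Delta=\gamma(t)y$, $y(t_0)=1$, whose solution is $e_\gamma(t,t_0)$. Corollary \ref{cor1} supplies exactly this $t$-independence of eigenvectors (they coincide with those of the constant matrix $M$), so the reduction is legitimate; one must also confirm that the Floquet exponent $\gamma$ selected via Theorem \ref{thm9} is the eigenvalue of $R(t)$ attached to this same eigenvector $v_0$, which is precisely the content of matching the multiplier $\lambda$ to the exponent $\gamma$ through $e_\gamma(\delta_+^T(t_0),t_0)=\lambda$. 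Once this alignment is in place, the remaining verifications are routine applications of the stated lemmas.
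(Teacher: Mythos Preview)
Your proposal is correct and follows essentially the same approach as the paper: invoke Theorem~\ref{thm9} to obtain a Floquet decomposition $\Phi_A(t,t_0)=L(t)e_R(t,t_0)$ with $\gamma(t)$ an eigenvalue of $R(t)$, take a corresponding ($t$-independent) eigenvector $u$, set $x(t)=\Phi_A(t,t_0)u$ and $q(t)=L(t)u$, and use the scalarization $e_R(t,t_0)u=e_\gamma(t,t_0)u$ together with the periodicity of $L$. The only cosmetic difference is in the verification of the shift relation: the paper computes $x(\delta_+^T(t))$ via the scalar exponential $e_\gamma$ and the identity $e_\gamma(\delta_+^T(t),\delta_+^T(t_0))=e_\gamma(t,t_0)$, whereas you compute it via $\Phi_A$ and the identity (\ref{2.1}); both routes are equivalent and equally short.
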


\begin{proof}
Let $\Phi_{A}\left(  t,t_{0}\right)  $ be the transition matrix of (\ref{4.1})
and $\Phi_{A}\left(  t,t_{0}\right)  =L\left(  t\right)  e_{R}\left(
t,t_{0}\right)  $ is Floquet decomposition such that $\gamma\left(  t\right)
$ is an eigenvalue of $R\left(  t\right)  $. There exists a nonzero vector
$u\neq0$ such that $R\left(  t\right)  u=\gamma\left(  t\right)  u$, and
therefore, $e_{R}\left(  t,t_{0}\right)  u=e_{\gamma}\left(  t,t_{0}\right)
u$. Then, we can represent the solution $x\left(  t\right)  :=\Phi_{A}\left(
t,t_{0}\right)  u$ as follows
\[
x\left(  t\right)  =L\left(  t\right)  e_{R}\left(  t,t_{0}\right)
u=e_{\gamma}\left(  t,t_{0}\right)  L\left(  t\right)  u.
\]
If we set $q\left(  t\right)  =L\left(  t\right)  u$, the last equality
implies (\ref{xq}). Thus, the first part of the theorem is proven.

The second part is proven by the following equality.%
\begin{align*}
x\left(  \delta_{+}^{T}\left(  t\right)  \right)   &  =e_{\gamma}\left(
\delta_{+}^{T}\left(  t\right)  ,t_{0}\right)  q\left(  \delta_{+}^{T}\left(
t\right)  \right) \\
&  =e_{\gamma}\left(  \delta_{+}^{T}\left(  t\right)  ,\delta_{+}^{T}\left(
t_{0}\right)  \right)  e_{\gamma}\left(  \delta_{+}^{T}\left(  t_{0}\right)
,t_{0}\right)  q\left(  t\right) \\
&  =e_{\gamma}\left(  \delta_{+}^{T}\left(  t_{0}\right)  ,t_{0}\right)
e_{\gamma}\left(  t,t_{0}\right)  L\left(  t\right)  u\\
&  =e_{\gamma}\left(  \delta_{+}^{T}\left(  t_{0}\right)  ,t_{0}\right)
x\left(  t\right) \\
&  =\lambda x\left(  t\right)  .
\end{align*}

\end{proof}

The preceding theorem provides a procedure for the construction of a solution
to the system (\ref{4.1}) when a characteristic multiplier is given. In the
following theorem, we show that two solutions corresponding to two distinct
characteristic multipliers are linearly independent.

\begin{theorem}
\label{co1}Let $\lambda_{1}$ and $\lambda_{2}$ be the characteristic
multipliers of the system (\ref{4.1}) and $\gamma_{1}$ and $\gamma_{2}$ are
Floquet exponents such that%
\[
e_{\gamma_{i}}(\delta_{+}^{T}(t_{0}),t_{0})=\lambda_{i}\text{,\ \ \ }i=1,2.
\]
If $\lambda_{1}\neq\lambda_{2}$, then there exist $T$-periodic functions
$q_{1}$ and $q_{2}$ in shifts such that%
\[
x_{i}(t)=e_{\gamma_{i}}(t,t_{0})q_{i}(t),\text{ }i=1,2
\]
are linearly independent solutions of (\ref{4.1}).
\end{theorem}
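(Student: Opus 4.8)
The plan is to invoke Theorem \ref{thm10} for each of the two characteristic multipliers separately and then run the classical ``apply the shift and eliminate'' argument that distinguishes solutions with different multipliers. First I would apply Theorem \ref{thm10} to $\lambda_{1}$ and to $\lambda_{2}$: since each $\lambda_{i}$ is a characteristic multiplier with corresponding Floquet exponent $\gamma_{i}$, the theorem guarantees a $T$-periodic function $q_{i}$ in shifts such that
\[
x_{i}(t)=e_{\gamma_{i}}(t,t_{0})q_{i}(t)
\]
is a nontrivial solution of (\ref{4.1}) obeying the multiplier relation
\[
x_{i}\left(\delta_{+}^{T}(t)\right)=\lambda_{i}x_{i}(t),\qquad i=1,2.
\]
This already delivers the claimed form of $x_{1}$ and $x_{2}$, so the only remaining task is to establish their linear independence.

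To test independence I would suppose that constants $c_{1},c_{2}$ satisfy $c_{1}x_{1}(t)+c_{2}x_{2}(t)=0$ for all $t\in\mathbb{T}^{\ast}$ and aim to conclude $c_{1}=c_{2}=0$. Replacing $t$ by $\delta_{+}^{T}(t)$ and using the multiplier relation yields a second identity
\[
c_{1}\lambda_{1}x_{1}(t)+c_{2}\lambda_{2}x_{2}(t)=0.
\]
Multiplying the first identity by $\lambda_{2}$ and subtracting eliminates $x_{2}$, giving $c_{1}(\lambda_{1}-\lambda_{2})x_{1}(t)=0$ for all $t$; an entirely symmetric elimination of $x_{1}$ gives $c_{2}(\lambda_{2}-\lambda_{1})x_{2}(t)=0$. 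Since $\lambda_{1}\neq\lambda_{2}$ by hypothesis, these reduce to $c_{1}x_{1}(t)\equiv0$ and $c_{2}x_{2}(t)\equiv0$.

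The step I expect to be the crux is converting $c_{1}x_{1}(t)\equiv0$ into $c_{1}=0$, which needs the genuine nontriviality of $x_{1}$, i.e. $x_{1}(t)\neq0$ at some, hence every, point. Here I would lean on uniqueness of solutions of the regressive homogeneous initial value problem: a solution of $x^{\Delta}=A(t)x$ that vanishes at one point vanishes identically, so a nontrivial solution never takes the value $0$. Concretely, Theorem \ref{thm10} produces $x_{i}$ as $\Phi_{A}(\cdot,t_{0})u$ for a nonzero eigenvector $u$ of $R$, and since $\Phi_{A}$ is invertible (its columns form a fundamental matrix), $x_{i}(t)\neq0$ for every $t$. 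Therefore $c_{1}=0$ and, symmetrically, $c_{2}=0$, which is precisely the linear independence of $x_{1}$ and $x_{2}$. No machinery beyond Theorem \ref{thm10} and uniqueness is required, and the hypothesis $\lambda_{1}\neq\lambda_{2}$ is used exactly once, in the elimination step.
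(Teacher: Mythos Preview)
Your argument is correct and takes a genuinely different route from the paper's. You treat Theorem~\ref{thm10} as a black box, applying it once for each pair $(\lambda_{i},\gamma_{i})$ to produce the solutions $x_{i}=e_{\gamma_{i}}q_{i}$ with the multiplier relation, and then you run the classical ``shift and eliminate'' Vandermonde-style argument to deduce independence; nontriviality of each $x_{i}$ (hence $x_{i}(t)\neq 0$ for all $t$, by invertibility of $\Phi_{A}$) closes it out. The paper instead works inside a \emph{single} Floquet decomposition $\Phi_{A}(t,t_{0})=L(t)e_{R}(t,t_{0})$ chosen so that $\gamma_{1}$ is an eigenvalue of $R$: it finds eigenvectors $v_{1},v_{2}$ of $R$ for the distinct eigenvalues attached to $\lambda_{1},\lambda_{2}$, infers their linear independence, and concludes independence of the solutions from $x_{i}(t_{0})=v_{i}$. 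Because $\gamma_{2}$ need not itself be the eigenvalue of that particular $R$, the paper must rewrite $x_{2}$ using the Hilger-imaginary correction $e_{\ominus\mathring{\imath}\frac{2\pi k}{\delta_{+}^{T}(t_{0})-t_{0}}}$ absorbed into $q_{2}$. Your approach is shorter and avoids this correction entirely (at the cost of implicitly invoking, via Theorem~\ref{thm10}, a possibly different Floquet decomposition for each $i$); the paper's approach is more constructive, giving explicit formulas for $q_{1}$ and $q_{2}$ in terms of one $L$ and the eigenvectors of one $R$.
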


\begin{proof}
Let $\Phi_{A}\left(  t,t_{0}\right)  =L\left(  t\right)  e_{R}\left(
t,t_{0}\right)  $ and $\gamma_{1}\left(  t\right)  $ be an eigenvalue of
$R\left(  t\right)  $ corresponding to nonzero eigenvector $v_{1}.$ Since
$\lambda_{2}$ is an eigenvalue of $\Phi_{A}\left(  \delta_{+}^{T}\left(
t_{0}\right)  ,t_{0}\right)  $, by Theorem \ref{thm8} there is an eigenvalue
$\gamma\left(  t\right)  $\ of $\ R\left(  t\right)  $ satisfying%
\[
e_{\gamma}\left(  \delta_{+}^{T}\left(  t_{0}\right)  ,t_{0}\right)
=\lambda_{2}=e_{\gamma_{2}}\left(  \delta_{+}^{T}\left(  t_{0}\right)
,t_{0}\right)  \text{.}%
\]
Hence, for some $k\in\mathbb{Z}$ we have $\gamma_{2}(t)=\gamma\left(
t\right)  \oplus\overset{\circ}{\imath}\frac{2\pi k}{\delta_{+}^{T}\left(
t_{0}\right)  -t_{0}}$. Furthermore, $\lambda_{1}\neq\lambda_{2}$ implies that
$\gamma(t)\neq\gamma_{1}\left(  t\right)  $. If $v_{2}$ is a nonzero
eigenvector of $R\left(  t\right)  $ corresponding to eigenvalue $\gamma(t)$,
then the eigenvectors $v_{1}$ and $v_{2}$ are linearly independent. Similar to
the related part in the proof of Theorem \ref{thm10}, we can state the
solutions of the system (\ref{4.1}) as follows:%
\begin{equation}
x_{1}\left(  t\right)  =e_{\gamma_{1}}\left(  t,t_{0}\right)  L\left(
t\right)  v_{1} \label{4.4}%
\end{equation}
and
\[
x_{2}\left(  t\right)  =e_{\gamma}\left(  t,t_{0}\right)  L\left(  t\right)
v_{2}.
\]
Since $x_{1}\left(  t_{0}\right)  =L(t_{0})v_{1}$ and $x_{2}\left(  t_{0}\right)
=L(t_{0})v_{2}$, the solutions $x_{1}\left(  t\right)  $ and $x_{2}\left(  t\right)  $
are linearly independent. Moreover, the solution $x_{2}$ can be rewritten in
the following form%
\begin{align}
x_{2}(t)  &  =e_{\gamma_{2}}\left(  t,t_{0}\right)  e_{\gamma\ominus\gamma
_{2}}(t,t_{0})L(t)\nu_{2}\nonumber\\
&  =e_{\gamma_{2}}\left(  t,t_{0}\right)  e_{\ominus\overset{\circ}{\imath
}\frac{2\pi k}{\delta_{+}^{T}\left(  t_{0}\right)  -t_{0}}}\left(
t,t_{0}\right)  L(t)\nu_{2}. \label{4.5}%
\end{align}
Letting $q_{1}\left(  t\right)  =L\left(  t\right)  v_{1}$ and $q_{2}\left(
t\right)  =e_{\ominus\overset{\circ}{\imath}\frac{2\pi k}{\delta_{+}%
^{T}\left(  t_{0}\right)  -t_{0}}}\left(  t,t_{0}\right)  L(t)\nu_{2}$ in
(\ref{4.4}) and (\ref{4.5}), respectively, we complete the proof.
\end{proof}

\section{Floquet Theory and Stability}

In this section, we employ the unified Floquet theory that we established in
previous sections to investigate the stability characteristics of the
regressive periodic system
\begin{equation}
x^{\Delta}\left(  t\right)  =A\left(  t\right)  x\left(  t\right)  ,\text{
}x\left(  t_{0}\right)  =x_{0}. \label{5.0}%
\end{equation}
We know by Theorem \ref{thm4.1} that the matrix $R$ in the Floquet
decomposition of $\Phi_{A}$ is given by%
\begin{equation}
R\left(  t\right)  =\lim_{s\rightarrow t}\frac{\Phi_{A}\left(  \delta_{+}%
^{T}\left(  t_{0}\right)  ,t_{0}\right)  ^{\frac{1}{T}\left[  \Theta\left(
\sigma\left(  t\right)  \right)  -\Theta\left(  s\right)  \right]  }-I}%
{\sigma\left(  t\right)  -s}. \label{5.0.1}%
\end{equation}
Also, Theorem \ref{thm2} concludes that the solution $z(t)$ of the
regressive system
\begin{equation}
z^{\Delta}\left(  t\right)  =R\left(  t\right)  z\left(  t\right)  ,\text{
}z\left(  t_{0}\right)  =x_{0} \label{5.1}%
\end{equation}
can be expressed in terms of the solution $x(t)$ of the system (\ref{5.0}) as
follows: $z(t)=L^{-1}(t)x(t)$, where $L(t)$ is the Lyapunov transformation
given by (\ref{3.6}).

In preparation for the main result we can give the following definitions and
results which can be found in \cite{[3]}.

\begin{definition}
[Stability]The time varying linear dynamic equation (\ref{5.0}) is uniformly
stable if there exists a positive constant $\alpha$ such that for any $t_{0}$
the corresponding solution $x(t)$ satisfies%
\[
\left\Vert x(t)\right\Vert \leq\alpha\left\Vert x(t_{0})\right\Vert ,\text{
}t\geq t_{0}.
\]

\end{definition}

\begin{theorem}
The time varying linear dynamic equation (\ref{5.0}) is uniformly stable if
and only if there exists a $\alpha>0$ such that the transition matrix
$\Phi_{A}$ satisfies%
\[
\left\Vert \Phi_{A}(t,t_{0})\right\Vert \leq\alpha,\text{ }t\geq t_{0}.
\]

\end{theorem}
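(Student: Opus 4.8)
The plan is to prove the two implications separately, using the representation $x(t)=\Phi_{A}(t,t_{0})x_{0}$ of the solution of (\ref{5.0}) together with the standard fact that the matrix norm $\left\Vert \cdot\right\Vert $ is the operator norm induced by the vector norm on $\mathbb{R}^{n}$, so that $\left\Vert \Phi_{A}(t,t_{0})\right\Vert =\sup_{\left\Vert v\right\Vert =1}\left\Vert \Phi_{A}(t,t_{0})v\right\Vert $ and the submultiplicative estimate $\left\Vert \Phi_{A}(t,t_{0})v\right\Vert \leq\left\Vert \Phi_{A}(t,t_{0})\right\Vert \left\Vert v\right\Vert $ holds for every $v$.

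First I would establish the easy direction, that boundedness of the transition matrix is sufficient for uniform stability. Assuming $\left\Vert \Phi_{A}(t,t_{0})\right\Vert \leq\alpha$ for all $t\geq t_{0}$, I note that every solution of (\ref{5.0}) can be written as $x(t)=\Phi_{A}(t,t_{0})x(t_{0})$, whence submultiplicativity yields $\left\Vert x(t)\right\Vert =\left\Vert \Phi_{A}(t,t_{0})x(t_{0})\right\Vert \leq\left\Vert \Phi_{A}(t,t_{0})\right\Vert \left\Vert x(t_{0})\right\Vert \leq\alpha\left\Vert x(t_{0})\right\Vert $, which is exactly uniform stability with the same constant $\alpha$.

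For the converse direction, I would assume uniform stability: there is $\alpha>0$ such that every solution satisfies $\left\Vert x(t)\right\Vert \leq\alpha\left\Vert x(t_{0})\right\Vert $ for $t\geq t_{0}$ and every initial time $t_{0}$. Fixing $t_{0}$ and an arbitrary unit vector $v\in\mathbb{R}^{n}$, I observe that since $A\in\mathcal{R}$ the initial value problem (\ref{5.0}) with $x_{0}=v$ has a unique solution, namely $x(t)=\Phi_{A}(t,t_{0})v$, with $\left\Vert x(t_{0})\right\Vert =\left\Vert v\right\Vert =1$. Applying the stability estimate to this particular solution gives $\left\Vert \Phi_{A}(t,t_{0})v\right\Vert =\left\Vert x(t)\right\Vert \leq\alpha$ for all $t\geq t_{0}$, and taking the supremum over all unit vectors $v$ in the operator-norm characterization yields $\left\Vert \Phi_{A}(t,t_{0})\right\Vert \leq\alpha$ for all $t\geq t_{0}$.

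The argument is essentially a translation between the pointwise stability bound and the operator-norm bound, so I do not anticipate a genuine obstacle; the only point requiring care is the reduction to unit initial vectors, which relies on the existence and uniqueness of solutions guaranteed by regressivity of $A$, so that the transition matrix genuinely realizes every prescribed initial state $x(t_{0})=v$. I would also remark that both implications produce the same constant $\alpha$, so the equivalence holds verbatim with a single $\alpha$.
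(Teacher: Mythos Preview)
Your argument is correct and is the standard proof of this equivalence. Note, however, that the paper does not supply its own proof of this theorem: it is listed among the ``definitions and results which can be found in \cite{[3]}'' and is simply quoted as background. So there is no paper proof to compare against; your proposal would serve perfectly well as the missing justification.
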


\begin{definition}
[Exponential stability]The time varying linear dynamic equation (\ref{5.0}) is
uniformly exponentially stable if there exist positive constants $\alpha$,
$\beta$ with $-\beta\in\mathcal{R}^{+}$ such that for any $t_{0}$ the
corresponding solution $x(t)$ satisfies%
\[
\left\Vert x(t)\right\Vert \leq\left\Vert x(t_{0})\right\Vert \alpha
e_{-\beta}(t,t_{0}),\text{ }t\geq t_{0}.
\]

\end{definition}

Moreover, necessary and sufficient conditions for exponential stability can be
stated as the following:

\begin{theorem}
The time varying linear dynamic equation (\ref{5.0}) is uniformly
exponentially stable if and only if there exist $\alpha$, $\beta>0$ with
$-\beta\in\mathcal{R}^{+}$ such that the transition matrix $\Phi_{A}$
satisfies%
\[
\left\Vert \Phi_{A}(t,t_{0})\right\Vert \leq\alpha e_{-\beta}(t,t_{0}),\text{
}t\geq t_{0}.
\]

\end{theorem}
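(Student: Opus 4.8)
The plan is to prove both implications directly from the definition of uniform exponential stability, using the representation $x(t)=\Phi_{A}(t,t_{0})x_{0}$ of the solution of (\ref{5.0}) together with submultiplicativity of the norm and the supremum characterization of the induced matrix norm. No machinery beyond the definitions and the uniqueness of solutions (Theorem \ref{thm2.2}) is needed, so the argument is a short two-way chain of inequalities.

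First I would dispatch the easy ($\Leftarrow$) direction. Assuming $\|\Phi_{A}(t,t_{0})\|\le\alpha e_{-\beta}(t,t_{0})$ for all $t\ge t_{0}$, I take an arbitrary initial state $x_{0}=x(t_{0})$ and use that the unique solution of (\ref{5.0}) is $x(t)=\Phi_{A}(t,t_{0})x_{0}$. Submultiplicativity of the norm then gives
\[
\|x(t)\|=\|\Phi_{A}(t,t_{0})x_{0}\|\le\|\Phi_{A}(t,t_{0})\|\,\|x_{0}\|\le\alpha e_{-\beta}(t,t_{0})\,\|x(t_{0})\|,
\]
which is exactly the defining inequality for uniform exponential stability, with the same constants $\alpha,\beta$.

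For the ($\Rightarrow$) direction I would run the argument in reverse, the only extra ingredient being that the matrix norm is the operator norm induced by the vector norm. Assuming the system is uniformly exponentially stable, I fix $t_{0}$ and let $x_{0}$ range over unit vectors $\|x_{0}\|=1$. Each such $x_{0}$ is the initial state of the solution $x(t)=\Phi_{A}(t,t_{0})x_{0}$, so the stability estimate yields
\[
\|\Phi_{A}(t,t_{0})x_{0}\|=\|x(t)\|\le\alpha e_{-\beta}(t,t_{0})\,\|x(t_{0})\|=\alpha e_{-\beta}(t,t_{0}).
\]
Taking the supremum over all unit vectors $x_{0}$ and recalling $\|\Phi_{A}(t,t_{0})\|=\sup_{\|x_{0}\|=1}\|\Phi_{A}(t,t_{0})x_{0}\|$ produces the claimed bound on the transition matrix, again with the same $\alpha$ and $\beta$.

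The argument presents essentially no obstacle; the only point requiring care is the word \emph{uniform}, which demands that the estimate hold for every choice of initial time $t_{0}$ with constants $\alpha,\beta$ independent of $t_{0}$. I would therefore emphasize that the constants produced in each direction are precisely those furnished by the hypothesis and never depend on the choice of $t_{0}$. I would also note that the positivity of $e_{-\beta}(t,t_{0})$, guaranteed by the assumption $-\beta\in\mathcal{R}^{+}$, is used implicitly so that the bounding quantity is well defined and positive and so that the two directions are genuinely equivalent rather than merely one-sided.
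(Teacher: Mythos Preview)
Your proof is correct and is the standard argument for this equivalence. Note, however, that the paper does not supply its own proof of this statement: it is one of several preliminary results quoted from \cite{[3]} (see the sentence ``In preparation for the main result we can give the following definitions and results which can be found in \cite{[3]}'') and is stated without proof. Your argument is exactly the expected one and matches what one finds in \cite{[3]} and the wider literature, so there is nothing to compare against within this paper itself.
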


\begin{definition}
[Asymptotical stability]The system (\ref{5.0}) is said to be uniformly
asymptotically stable if it is uniformly stable and given any $c>0,$ there
exists a $K>0$ so that for any $t_{0}$ and $x(t_{0}),$ the corresponding
solution $x(t)$ satisfies%
\[
\left\Vert x(t)\right\Vert \leq c\left\Vert x(t_{0})\right\Vert ,\text{ }t\geq
t_{0}+K.
\]

\end{definition}

Given a constant $n\times n$ matrix $M$, let $S$ be a nonsingular matrix that
transforms $M$ into its Jordan canonical form%
\[
J:=S^{-1}MS=diag\left[  J_{m_{1}}\left(  \lambda_{1}\right)  ,\ldots,J_{m_{k}%
}\left(  \lambda_{k}\right)  \right]  ,
\]
where $k\leq n,$ $%
%TCIMACRO{\dsum \limits_{i=1}^{k}}%
%BeginExpansion
{\displaystyle\sum\limits_{i=1}^{k}}
%EndExpansion
m_{i}=n,$ $\lambda_{i}$ are the eigenvalues of $M$, and $J_{m}\left(
\lambda\right)  $ is an $m\times m$ Jordan block given by%
\[
J_{m}\left(  \lambda\right)  =\left[
\begin{array}
[c]{ccccc}%
\lambda & 1 &  &  & \\
& \lambda & 1 &  & \\
&  & \ddots & \ddots & \\
&  &  & \ddots & 1\\
&  &  &  & \lambda
\end{array}
\right]  .
\]

\begin{definition}
(\label{def5.1} \cite{[19]} See also \cite[Definition 7.1]{[3]}) The scalar
function $\gamma:\mathbb{T\rightarrow C}$ is uniformly regressive if there
exists a constant $\theta>0$ such that $0<\theta^{-1}\leq\left\vert
1+\mu\left(  t\right)  \gamma\left(  t\right)  \right\vert ,$ for all
$t\in\mathbb{T}^{\kappa}.$
\end{definition}

\begin{lemma}
\label{lem5.2}Each eigenvalue of the matrix $R\left(  t\right)  $ in
(\ref{5.1}) is uniformly regressive.
\end{lemma}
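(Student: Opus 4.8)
The plan is to reduce the claim to an elementary scalar estimate on $1+\mu(t)\gamma(t)$, where $\gamma(t)$ denotes an arbitrary eigenvalue of $R(t)$, and then to bound this quantity away from zero uniformly in $t$. First I would fix an eigenpair $\{\lambda,v\}$ of the monodromy matrix $M=\Phi_{A}(\delta_{+}^{T}(t_{0}),t_{0})$; by Remark \ref{rem multipliers} we have $\lambda\neq0$, and by Corollary \ref{cor1} the vector $v$ is an eigenvector of $R(t)$ with eigenvalue $\gamma(t)$. Since $e_{R}(t,t_{0})=M^{\frac{1}{T}\Theta(t)}$ by (\ref{4.3}), the graininess identity $e_{R}(\sigma(t),t_{0})=(I+\mu(t)R(t))e_{R}(t,t_{0})$ of Theorem \ref{thm2.1}(2), together with the law $M^{a}M^{b}=M^{a+b}$ recorded after Definition \ref{def real power}, yields
\[
I+\mu(t)R(t)=M^{\frac{1}{T}[\Theta(\sigma(t))-\Theta(t)]}.
\]
Applying this to $v$ gives $1+\mu(t)\gamma(t)=\lambda^{\,r(t)}$ with $r(t):=\frac{1}{T}[\Theta(\sigma(t))-\Theta(t)]$, so that $|1+\mu(t)\gamma(t)|=|\lambda|^{\,r(t)}$, because a real power of a nonzero complex number has modulus equal to the corresponding real power of its modulus.

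Next I would control the exponent $r(t)$. At right-dense points $\mu(t)=0$ and $|1+\mu(t)\gamma(t)|=1$, so nothing is needed there; the substance is at right-scattered points, where I claim $0<\Theta(\sigma(t))-\Theta(t)\le T$, that is $r(t)\in(0,1]$. To see this I would first rewrite $\Theta$ from (\ref{4.1.1}): writing $g_{j}:=\delta_{+}^{(j)}(T,t_{0})$, the commutativity $\delta_{+}(T,g_{j-1})=\delta_{+}(g_{j-1},T)$ of Lemma \ref{lem3.1}(5) and the inversion law $\delta_{-}(s,\delta_{+}(s,u))=u$ of Definition \ref{def3.1}(4) give $\delta_{-}(g_{j-1},g_{j})=T$, whence $\Theta(g_{k})=kT$ and, more generally, $\Theta$ increases by exactly $T$ across each period interval $[g_{k-1},g_{k}]$. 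The strict monotonicity of the shift operators in both slots (Definition \ref{def3.1}(1)--(2)) makes $\Theta$ strictly increasing. Since $g_{m(t)}\in\mathbb{T}^{\ast}$ is a time-scale point no smaller than $t$, the successor $\sigma(t)$ cannot exceed $g_{m(t)}$, so the whole jump $[t,\sigma(t)]$ is contained in a single period interval $[g_{k-1},g_{k}]$; monotonicity then forces $0<\Theta(\sigma(t))-\Theta(t)\le\Theta(g_{k})-\Theta(g_{k-1})=T$. This is the step I expect to be the main obstacle, since making the containment and monotonicity rigorous requires a short case analysis (according to whether $t$ and $\sigma(t)$ are grid points of $P(t_{0})$ or lie strictly between consecutive grid points, via the definition of $G$ in (\ref{G(t)})) and careful bookkeeping with the domains $\mathcal{D}_{\pm}$ of Definition \ref{def3.1}.

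Finally I would assemble the bound. With $r(t)\in(0,1]$ and $|\lambda|>0$, the map $r\mapsto|\lambda|^{r}$ is monotone, so $|\lambda|^{\,r(t)}\ge|\lambda|$ when $|\lambda|\le1$ and $|\lambda|^{\,r(t)}\ge1$ when $|\lambda|\ge1$; in either case $|1+\mu(t)\gamma(t)|\ge\min(|\lambda|,1)$. Setting $\theta:=\big(\min(|\lambda|,1)\big)^{-1}$ gives $0<\theta^{-1}\le|1+\mu(t)\gamma(t)|$ for every $t\in\mathbb{T}^{\kappa}$, the right-dense case being covered since $\theta^{-1}\le1$; this is precisely uniform regressivity in the sense of Definition \ref{def5.1}. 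Taking the largest such $\theta$ over the finitely many eigenvalues $\lambda$ of $M$ would furnish a single constant valid for all eigenvalues of $R(t)$ at once.
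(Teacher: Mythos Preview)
Your proposal is correct and takes essentially the same approach as the paper: both compute $|1+\mu(t)\gamma(t)|=|\lambda|^{r(t)}$ with $r(t)=\tfrac{1}{T}[\Theta(\sigma(t))-\Theta(t)]$, split on whether $|\lambda|\ge1$ or $|\lambda|<1$, and set $\theta^{-1}=\min\{1,|\lambda_{1}|,\ldots,|\lambda_{k}|\}$. Your write-up is in fact more careful than the paper's, since you explicitly justify $r(t)\in(0,1]$ (which the paper uses implicitly) and treat right-dense points separately.
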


\begin{proof}
Define $\Lambda(t,s)$ by
\begin{equation}
\Lambda(t,s):=\Theta\left(  \sigma\left(  t\right)  \right)  -\Theta\left(
s\right)  . \label{gamma t-s}%
\end{equation}
As we did in Corollary \ref{cor1}, let%
\[
\gamma_{i}\left(  t\right)  =\lim_{s\rightarrow t}\left(  \frac{\lambda
_{i}^{\frac{1}{T}\Lambda(t,s)}-1}{\sigma\left(  t\right)  -s}\right)  \text{,
}i=1,2,...,k
\]
be any of the $k\leq n$ distinct eigenvalues of $R\left(  t\right)  $. Now,
there are two cases:

\begin{enumerate}
\item If $\left\vert \lambda_{i}\right\vert \geq1$, then%
\[
\left\vert 1+\mu\left(  t\right)  \gamma_{i}\left(  t\right)  \right\vert
=\lim_{s\rightarrow t}\left\vert 1+\mu\left(  s\right)  \frac{\lambda
_{i}^{\frac{1}{T}\Lambda(t,s)}-1}{\sigma\left(  t\right)  -s}\right\vert
=\lim_{s\rightarrow t}\left\vert \lambda_{i}^{\frac{1}{T}\Lambda
(t,s)}\right\vert >1.
\]

\item If $0\leq\left\vert \lambda_{i}\right\vert <1$, then,%
\[
\left\vert 1+\mu\left(  t\right)  \gamma_{i}\left(  t\right)  \right\vert
=\lim_{s\rightarrow t}\left\vert 1+\mu\left(  s\right)  \frac{\lambda
_{i}^{\frac{1}{T}\Lambda(t,s)}-1}{\sigma\left(  t\right)  -s}\right\vert
=\lim_{s\rightarrow t}\left\vert \lambda_{i}^{\frac{1}{T}\Lambda
(t,s)}\right\vert \geq\left\vert \lambda_{i}\right\vert .
\]

\end{enumerate}

\noindent If we set $\theta^{-1}:=\min\{1,\left\vert \lambda_{1}\right\vert
,\ldots,\left\vert \lambda_{k}\right\vert \}$, then we obtain%
\[
0<\theta^{-1}<\left\vert 1+\mu\left(  t\right)  \gamma_{i}\left(  t\right)
\right\vert ,
\]
where we used Remark \ref{rem multipliers} to get $0<\theta^{-1}$.
\end{proof}

\begin{definition}
\label{def5.3}\cite[Definition 7.3]{[3]} A nonzero, delta differentiable
vector $w\left(  t\right)  $ is said to be a dynamic eigenvector of a matrix
$H\left(  t\right)  $ associated with the dynamic eigenvalue $\xi\left(
t\right)  $ if the pair satisfies the dynamic eigenvalue problem%
\begin{equation}
w^{\Delta}\left(  t\right)  =H\left(  t\right)  w\left(  t\right)  -\xi\left(
t\right)  w^{\sigma}\left(  t\right)  ,\text{ }t\in\mathbb{T}^{k}. \label{5.2}%
\end{equation}
We call $\{\xi\left(  t\right)  ,w\left(  t\right)  \}$ a dynamic eigenpair.
Also, the nonzero, delta differentiable vector%
\begin{equation}
\chi_{i}:=e_{\xi_{i}}\left(  t,t_{0}\right)  w_{i}\left(  t\right)  ,
\label{5.3}%
\end{equation}
is called the mode vector of $M\left(  t\right)  $ associated with the dynamic
eigenpair\ $\left\{  \xi_{i}\left(  t\right)  ,w_{i}\left(  t\right)
\right\}  $.
\end{definition}

Now, we can give the following results similar to \cite[Lemma 7.4, Theorem
7.5]{[3]}:

\begin{lemma}
\label{lem5.4}Given the $n\times n$ regressive matrix $K$, there always exists
a set of $n$ dynamic eigenpairs with linearly independent eigenvectors. Each
of the eigenpairs satisfies the vector dynamic eigenvalue problem (\ref{5.2})
associated with $H$. Furthermore, when the $n$ vectors form the columns of
$W\left(  t\right)  ,$ then $W\left(  t\right)  $ satisfies the equivalent
matrix dynamic eigenvalue problem%
\begin{equation}
W^{\Delta}\left(  t\right)  =H\left(  t\right)  W\left(  t\right)  -W^{\sigma
}\left(  t\right)  \Xi\left(  t\right)  ,\text{ where }\Xi\left(  t\right)
:=diag\left[  \xi_{1}\left(  t\right)  ,\ldots,\xi_{n}\left(  t\right)
\right]  . \label{5.4}%
\end{equation}

\end{lemma}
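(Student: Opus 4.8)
The plan is to reduce the matrix dynamic eigenvalue problem \eqref{5.4} to the ordinary homogeneous equation $\chi^{\Delta}=H\chi$ by peeling off a scalar exponential factor. The tool I would establish first is the equivalence: for a regressive scalar $\xi$ and a nonzero, delta-differentiable $w$, the pair $\{\xi,w\}$ solves \eqref{5.2} if and only if the mode vector $\chi:=e_{\xi}(\cdot,t_{0})w$ of \eqref{5.3} solves $\chi^{\Delta}=H\chi$. This follows from the product rule $(e_{\xi}w)^{\Delta}=e_{\xi}^{\Delta}w^{\sigma}+e_{\xi}w^{\Delta}=\xi e_{\xi}w^{\sigma}+e_{\xi}w^{\Delta}$ together with $e_{\xi}H=He_{\xi}$ and $e_{\xi}\neq0$: dividing $\chi^{\Delta}=H\chi$ by $e_{\xi}$ returns exactly $w^{\Delta}=Hw-\xi w^{\sigma}$. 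Consequently, producing dynamic eigenpairs is the same as producing solutions of $\chi^{\Delta}=H\chi$ and prescribing, for each, a regressive scalar $\xi_{i}$.

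Next I would manufacture the $n$ eigenpairs from the spectral data of $H$. Since $H$ is regressive its eigenvalues are uniformly regressive by Lemma \ref{lem5.2}, so every exponential $e_{\xi_{i}}$ appearing below is well defined and nonvanishing. In the case relevant here, where $H=R$ is the matrix of Theorem \ref{thm4.1} and is diagonalizable, I would invoke Corollary \ref{cor1}, by which $R$ carries the constant eigenvectors $v_{1},\ldots,v_{n}$ of the underlying monodromy matrix while its eigenvalues $\gamma_{i}(t)$ vary in time; setting $\xi_{i}:=\gamma_{i}$ and $w_{i}:=v_{i}$ gives $w_{i}^{\Delta}=0$ and $Hw_{i}-\xi_{i}w_{i}^{\sigma}=\gamma_{i}v_{i}-\gamma_{i}v_{i}=0$, so each $\{\gamma_{i},v_{i}\}$ is a dynamic eigenpair. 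For a defective matrix I would instead use the Jordan decomposition $J=S^{-1}MS$ set up above: the columns of $S$ supply a complete set of generalized eigenvectors, and within each nontrivial Jordan block the dynamic eigenvectors must be taken time-dependent so as to cancel the nilpotent superdiagonal coupling, with $\xi_{i}$ still the associated dynamic eigenvalue.

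It remains to verify independence and assemble the matrix identity. Arranging $w_{i}(t_{0})$ to be the linearly independent columns of $S$, each $w_{i}(t)=e_{\ominus\xi_{i}}(t,t_{0})\chi_{i}(t)$ is a nonzero scalar multiple of the $i$-th member of a fundamental set $\{\chi_{i}\}$; hence $\sum_{i}c_{i}w_{i}(t)=0$ forces $\sum_{i}\bigl(c_{i}e_{\ominus\xi_{i}}(t,t_{0})\bigr)\chi_{i}(t)=0$, and invertibility of the fundamental matrix yields $c_{i}=0$, giving linear independence for every $t$. Collecting the vectors as the columns of $W=[\,w_{1}\mid\cdots\mid w_{n}\,]$ and reading the identities $w_{i}^{\Delta}=Hw_{i}-\xi_{i}w_{i}^{\sigma}$ columnwise, diagonality of $\Xi=\mathrm{diag}[\xi_{1},\ldots,\xi_{n}]$ makes the $i$-th column of $W^{\sigma}\Xi$ equal to $\xi_{i}w_{i}^{\sigma}$, which is precisely \eqref{5.4}.

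The product-rule step and the columnwise bookkeeping are routine. I expect the genuine obstacle to be the construction inside a nontrivial Jordan block: there a constant generalized eigenvector of $M$ is not an eigenvector of $H$, so the matching dynamic eigenvector $w_{i}(t)$ cannot be constant and must be recovered by solving an auxiliary regressive dynamic equation that absorbs the off-diagonal coupling, all while keeping $\Xi$ diagonal and the block's eigenvectors independent. This is the step that parallels \cite[Lemma 7.4, Theorem 7.5]{[3]}, and it is where I would concentrate the effort.
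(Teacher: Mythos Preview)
The paper does not actually prove this lemma: it is stated immediately after the sentence ``Now, we can give the following results similar to \cite[Lemma 7.4, Theorem 7.5]{[3]}'' and is left without proof, the argument being deferred entirely to DaCunha--Davis~\cite{[3]}. Your sketch---factoring each mode vector as $\chi_i=e_{\xi_i}(\cdot,t_0)w_i$, using the product rule to see that $\{\xi_i,w_i\}$ solves \eqref{5.2} iff $\chi_i$ solves $\chi^\Delta=H\chi$, and then reading off $n$ independent dynamic eigenpairs from a fundamental solution set---is precisely the approach of \cite[Lemma 7.4]{[3]}, so in that sense you are aligned with the source the paper invokes.

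Two small remarks. First, your appeal to Lemma~\ref{lem5.2} is slightly misplaced: that lemma concerns only the specific matrix $R(t)$ of \eqref{5.0.1}, not an arbitrary regressive $H$, so for the general statement you should simply note that regressivity of $H$ already guarantees the scalar exponentials $e_{\xi_i}$ are well defined and nonvanishing. Second, you rightly flag the Jordan-block construction as the real content; the paper in fact carries out exactly that construction later, inside the proof of Theorem~\ref{thm5.5}, where the $m_i$ time-dependent vectors $v_{i,1}(t),\ldots,v_{i,m_i}(t)$ solving the nilpotent system \eqref{5.8} are built by successive quadratures. That passage is effectively the missing piece you anticipate, so your plan and the paper's overall line of argument coincide.
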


\begin{theorem}
\label{thm5.4.1}Solutions to the uniformly regressive (but not necessarily
periodic) time varying linear dynamic system (\ref{5.0}) are:

\begin{enumerate}
\item stable if and only if there exists a $\gamma>0$ such that every mode
vector $\chi_{i}\left(  t\right)  $ of $A\left(  t\right)  $ satisfies
$\left\Vert \chi_{i}\left(  t\right)  \right\Vert \leq\gamma<\infty$,
$t>t_{0}$, for all $1\leq i\leq n;$

\item asymptotically stable if and only if, in addition to (1), $\left\Vert
\chi_{i}\left(  t\right)  \right\Vert \rightarrow0$, $t>t_{0}$, for all $1\leq
i\leq n,$

\item exponentially stable if and only if there exists $\gamma,\lambda>0$ with
$-\lambda\in\mathcal{R}^{+}\left(  \mathbb{T},\mathbb{R}\right)  $ such that
$\left\Vert \chi_{i}\left(  t\right)  \right\Vert \leq\gamma e_{\lambda
}\left(  t,t_{0}\right)  $, $t>t_{0}$, for all $1\leq i\leq n.$
\end{enumerate}
\end{theorem}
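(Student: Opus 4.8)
The plan is to realize the $n$ mode vectors $\chi_{i}(t)=e_{\xi_{i}}(t,t_{0})w_{i}(t)$ of Definition \ref{def5.3} as a fundamental set of solutions of (\ref{5.0}) and then to translate each of the three stability criteria---already characterized above in terms of the transition matrix $\Phi_{A}$---into the stated bounds on the $\chi_{i}$.

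First I would verify that every mode vector solves (\ref{5.0}). Writing $\chi_{i}=e_{\xi_{i}}(\cdot,t_{0})w_{i}$ and applying the product rule $(fg)^{\Delta}=f^{\Delta}g^{\sigma}+fg^{\Delta}$ together with $e_{\xi_{i}}^{\Delta}=\xi_{i}e_{\xi_{i}}$ and the dynamic eigenvalue relation $w_{i}^{\Delta}=Aw_{i}-\xi_{i}w_{i}^{\sigma}$ of (\ref{5.2}), one gets $\chi_{i}^{\Delta}=\xi_{i}e_{\xi_{i}}w_{i}^{\sigma}+e_{\xi_{i}}(Aw_{i}-\xi_{i}w_{i}^{\sigma})=A\chi_{i}$, so each $\chi_{i}$ is a solution. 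Since $e_{\xi_{i}}(t_{0},t_{0})=1$ we have $\chi_{i}(t_{0})=w_{i}(t_{0})$, and these vectors are linearly independent by Lemma \ref{lem5.4}; hence $X(t):=[\chi_{1}(t),\ldots,\chi_{n}(t)]$ is a fundamental matrix with $X(t_{0})=W(t_{0})$ constant and invertible, and $\Phi_{A}(t,t_{0})=X(t)X^{-1}(t_{0})$.

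The necessity directions are then immediate from $\chi_{i}(t)=\Phi_{A}(t,t_{0})w_{i}(t_{0})$: uniform stability gives $\|\Phi_{A}(t,t_{0})\|\le\alpha$ by the transition-matrix characterization above, so $\|\chi_{i}(t)\|\le\alpha\|w_{i}(t_{0})\|=:\gamma$, which is (1); asymptotic stability forces $\Phi_{A}(t,t_{0})\to0$ and hence $\chi_{i}(t)\to0$, the extra requirement in (2); and the exponential bound on $\Phi_{A}$ propagates to the exponential bound on $\chi_{i}$ in (3). For the sufficiency directions I would expand an arbitrary solution as $x(t)=\Phi_{A}(t,t_{0})x_{0}=\sum_{i=1}^{n}c_{i}\chi_{i}(t)$ with $c=W^{-1}(t_{0})x_{0}$, so that $\|x(t)\|\le\sum_{i}|c_{i}|\,\|\chi_{i}(t)\|$. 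A uniform bound $\|\chi_{i}(t)\|\le\gamma$ then makes $\|\Phi_{A}(t,t_{0})\|=\|X(t)X^{-1}(t_{0})\|\le\|X(t)\|\,\|W^{-1}(t_{0})\|$ bounded, giving uniform stability; decay of the $\chi_{i}$ transfers to $\Phi_{A}$ the same way, giving (2); and the exponential estimate propagates through the identical inequality, giving (3).

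The main obstacle is the uniformity over the initial time $t_{0}$ demanded by the stability definitions: every estimate above hinges on controlling the factor $\|W^{-1}(t_{0})\|$, equivalently the condition number $\|W(t_{0})\|\,\|W^{-1}(t_{0})\|$ of the dynamic eigenvector matrix, uniformly in $t_{0}$. This is precisely where Lemma \ref{lem5.4} and the uniform regressivity of the dynamic eigenvalues (Definition \ref{def5.1}, Lemma \ref{lem5.2}) are needed: uniform regressivity prevents the scalar factors $e_{\xi_{i}}(t,t_{0})$ from degenerating and lets the $w_{i}(t)$ be chosen with controlled norm, so that the equivalence between bounds on $\Phi_{A}$ and bounds on the mode vectors holds uniformly. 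Granting this control, the three equivalences follow by matching the transition-matrix characterizations of uniform, asymptotic, and exponential stability with the corresponding growth of the $\chi_{i}$, in parallel with \cite[Theorem 7.5]{[3]}.
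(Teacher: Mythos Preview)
The paper does not supply its own proof of this theorem; it simply introduces Lemma~\ref{lem5.4} and Theorem~\ref{thm5.4.1} with the remark that they ``can be given similar to \cite[Lemma 7.4, Theorem 7.5]{[3]}'' and then states them without argument. Your proposal is exactly the standard argument behind \cite[Theorem 7.5]{[3]}: show via the product rule and the dynamic eigenvalue relation that each mode vector $\chi_{i}$ solves (\ref{5.0}), assemble them into a fundamental matrix $X(t)=[\chi_{1},\ldots,\chi_{n}]$ with $X(t_{0})=W(t_{0})$ invertible, and then pass back and forth between bounds on $\Phi_{A}(t,t_{0})=X(t)W^{-1}(t_{0})$ and bounds on the individual columns $\chi_{i}$. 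That is precisely the route the cited reference takes, so your approach agrees with what the paper defers to.

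Your identification of the uniformity-in-$t_{0}$ issue is the right place to be careful, and your instinct to resolve it through uniform regressivity is correct; in the reference this is handled by noting that for a linear system the transition matrix satisfies $\Phi_{A}(t,s)=\Phi_{A}(t,t_{0})\Phi_{A}(t_{0},s)$, so a bound established for one base point transfers to all others up to the fixed factor $\|\Phi_{A}(t_{0},s)\|$, which is where regressivity (and in the periodic setting the compactness of one period) enters. With that in hand the three equivalences follow exactly as you outline.
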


\begin{definition}
\label{monomial} For each $k\in\mathbb{N}_{0}$ the mappings $h_{k}%
:\mathbb{T\times T}^{k}\rightarrow\mathbb{R}^{+}$, recursively defined by%
\begin{equation}
h_{0}(t,t_{0}):\equiv1,\text{ \ }h_{k+1}(t,t_{0})=%
%TCIMACRO{\dint \limits_{t_{0}}^{t}}%
%BeginExpansion
{\displaystyle\int\limits_{t_{0}}^{t}}
%EndExpansion
\left(  \lim_{s\rightarrow\tau}\frac{\Lambda(\tau,s)}{\sigma(\tau)-s}\right)
h_{k}(\tau,t_{0})\Delta\tau\text{ for }n\in\mathbb{N}_{0},
\label{hk recursive}%
\end{equation}
are called monomials, where $\Lambda(t,s)$ is given by (\ref{gamma t-s}).
\end{definition}

\begin{remark}
For an additive periodic time scale we always have $\Theta\left(  t\right)
=t-t_{0}$, and hence, $\Lambda(t,s)=\sigma(t)-s$.
\end{remark}

\begin{lemma}
\label{lem5.6}Let $\mathbb{T}$ be a time scale which is unbounded above and
$\gamma\left(  t\right)  $ be an eigenvalue of $R(t).$ If there exists a
constant $H\geq t_{0}$ such that
\begin{equation}
\inf_{t\in\lbrack H,\infty)_{\mathbb{T}}}\left[  -\left(  \lim_{s\rightarrow
t}\left(  \frac{\Lambda(t,s)}{\sigma(t)-s}\right)  \right)  ^{-1}%
\operatorname{Re}_{\mu}\gamma\left(  t\right)  \right]  >0
\label{infimum condtion}%
\end{equation}
holds, then%
\begin{equation}
\lim_{t\rightarrow\infty}h_{k}\left(  t,t_{0}\right)  e_{\gamma}\left(
t,t_{0}\right)  =0,\text{ }k\in\mathbb{N}_{0}. \label{lim hk}%
\end{equation}

\begin{proof}
It suffices to show that $\lim_{t\rightarrow\infty}h_{k}\left(  t,t_{0}%
\right)  e_{\operatorname{Re}_{\mu}\gamma(t)}\left(  t,t_{0}\right)  =0$ (see
\cite[Theorem 7.4]{hilger}). We proceed by mathematical induction. For $k=0,$
we know that $h_{0}\left(  t,t_{0}\right)  =1$ and by \cite{[19]}, we have%
\[
\lim_{t\rightarrow\infty}e_{\operatorname{Re}_{\mu}\gamma_{i}(t)}\left(
t,t_{0}\right)  =0\text{ for }t_{0}\in\mathbb{T}.
\]
Suppose that it is true for a fixed $k\in\mathbb{N}$ and focus on the
$(k+1)^{th}$ step.
\begin{align}
&  \lim_{t\rightarrow\infty}h_{k+1}\left(  t,t_{0}\right)
e_{\operatorname{Re}_{\mu}\gamma(t)}\left(  t,t_{0}\right) \nonumber\\
&  =\lim_{t\rightarrow\infty}\left[
%TCIMACRO{\dint \limits_{t_{0}}^{t}}%
%BeginExpansion
{\displaystyle\int\limits_{t_{0}}^{t}}
%EndExpansion
\mathfrak{R}\lim_{s\rightarrow\tau}\left(  \frac{\Lambda(\tau,s)}{\sigma
(\tau)-s}\right)  h_{k}\left(  \tau,t_{0}\right)  \Delta\tau+%
%TCIMACRO{\dint \limits_{t_{0}}^{t}}%
%BeginExpansion
{\displaystyle\int\limits_{t_{0}}^{t}}
%EndExpansion
\mathfrak{\dot{I}}\lim_{s\rightarrow\tau}\left(  \frac{\Lambda(\tau,s)}%
{\sigma(\tau)-s}\right)  h_{k}\left(  \tau,t_{0}\right)  \Delta\tau\right]
e_{\ominus\operatorname{Re}_{\mu}\gamma(t)}\left(  t,t_{0}\right)
^{-1}\nonumber\\
&  =\lim_{t\rightarrow\infty}\left[  \mathfrak{R}\lim_{s\rightarrow t}\left(
\frac{\Lambda(t,s)}{\sigma(t)-s}\right)  h_{k}\left(  t,t_{0}\right)
+\mathfrak{\dot{I}}\lim_{s\rightarrow t}\left(  \frac{\Lambda(t,s)}%
{\sigma(t)-s}\right)  h_{k}\left(  t,t_{0}\right)  \right]  \frac
{e_{\operatorname{Re}_{\mu}\gamma(t)}\left(  t,t_{0}\right)  }{\ominus
\operatorname{Re}_{\mu}\gamma(t)}\nonumber\\
&  =\lim_{t\rightarrow\infty}\left[  \frac{\lim_{s\rightarrow t}\left(
\frac{\Lambda(t,s)}{\sigma(t)-s}\right)  h_{k}\left(  t,t_{0}\right)
e_{\operatorname{Re}_{\mu}\gamma(t)}\left(  t,t_{0}\right)  }{\ominus
\operatorname{Re}_{\mu}\gamma(t)}\right]  , \label{xyz}%
\end{align}
where we used (\ref{infimum condtion}) together with \cite[Theorem 1.120]{[1]}
to obtain the second equality. Since
\[
\ominus\operatorname{Re}_{\mu}\gamma_{i}(t)=\frac{-\operatorname{Re}_{\mu
}\gamma(t)}{1+\mu(t)\operatorname{Re}_{\mu}\gamma(t)},
\]
the last term in (\ref{xyz}) can be written as%
\begin{align}
&  \lim_{t\rightarrow\infty}\left[  \frac{\lim_{s\rightarrow t}\left(
\frac{\Lambda(t,s)}{\sigma(t)-s}\right)  h_{k}\left(  t,t_{0}\right)
e_{\operatorname{Re}_{\mu}\gamma(t)}\left(  t,t_{0}\right)  }{\ominus
\operatorname{Re}_{\mu}\gamma(t)}\right] \nonumber\\
&  =\lim_{t\rightarrow\infty}\left[  \frac{\left(  1+\mu(t)\operatorname{Re}%
_{\mu}\gamma(t)\right)  h_{k}\left(  t,t_{0}\right)  e_{\operatorname{Re}%
_{\mu}\gamma(t)}\left(  t,t_{0}\right)  }{-\left(  \lim_{s\rightarrow
t}\left(  \frac{\Lambda(t,s)}{\sigma(t)-s}\right)  \right)  ^{-1}%
\operatorname{Re}_{\mu}\left(  \gamma\left(  t\right)  \right)  }\right]
\nonumber\\
&  \leq\lim_{t\rightarrow\infty}\left[  \frac{\left(  1+\mu
(t)\operatorname{Re}_{\mu}\gamma(t)\right)  h_{k}\left(  t,t_{0}\right)
e_{\operatorname{Re}_{\mu}\gamma(t)}\left(  t,t_{0}\right)  }{\inf
_{t\in\lbrack H,\infty)_{\mathbb{T}}}\left[  -\left(  \lim_{s\rightarrow
t}\left(  \frac{\Lambda(t,s)}{\sigma(t)-s}\right)  \right)  ^{-1}%
\operatorname{Re}_{\mu}\left(  \gamma\left(  t\right)  \right)  \right]
}\right]  . \label{xyz0}%
\end{align}
Now, one may use (\ref{4.1.1}) and (\ref{gamma t-s}) to get the inequality
\[
1+\mu(t)\operatorname{Re}_{\mu}\gamma(t)=\left\vert 1+\mu(t)\lim_{s\rightarrow
t}\left(  \frac{\lambda^{\frac{1}{T}\Lambda(t,s)}-1}{\sigma\left(  t\right)
-s}\right)  \right\vert \leq\max\left\{  1,\left\vert \lambda\right\vert
\right\}
\]
which along with (\ref{xyz0}) implies%
\[
\lim_{t\rightarrow\infty}h_{k+1}\left(  t,t_{0}\right)  e_{\operatorname{Re}%
_{\mu}\gamma(t)}\left(  t,t_{0}\right)  =0
\]
as desired.
\end{proof}
\end{lemma}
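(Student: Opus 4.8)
The plan is to reduce the complex-valued conclusion to a purely real one and then argue by induction on $k$. Since the Hilger exponential obeys $\left\vert e_{\gamma}(t,t_{0})\right\vert = e_{\operatorname{Re}_{\mu}\gamma}(t,t_{0})$ and each monomial $h_{k}(t,t_{0})$ is real and nonnegative, it suffices to prove
\[
\lim_{t\rightarrow\infty}h_{k}(t,t_{0})\,e_{\operatorname{Re}_{\mu}\gamma}(t,t_{0})=0
\]
for every $k\in\mathbb{N}_{0}$; I would cite \cite[Theorem 7.4]{hilger} for this normalization. For the base case $k=0$ we have $h_{0}\equiv1$, so the claim becomes $\lim_{t\rightarrow\infty}e_{\operatorname{Re}_{\mu}\gamma}(t,t_{0})=0$. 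The hypothesis (\ref{infimum condtion}) forces $\operatorname{Re}_{\mu}\gamma(t)$ to have the correct sign and to be bounded away from zero in the relevant weighted sense along $[H,\infty)_{\mathbb{T}}$, which is exactly the uniform exponential-decay criterion, so I would invoke the stability result of \cite{[19]} to finish the base case.

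For the inductive step I would assume the statement for a fixed $k$ and treat $k+1$ by writing the product as the quotient
\[
h_{k+1}(t,t_{0})\,e_{\operatorname{Re}_{\mu}\gamma}(t,t_{0})=\frac{h_{k+1}(t,t_{0})}{e_{\ominus\operatorname{Re}_{\mu}\gamma}(t,t_{0})}
\]
and applying the time-scale L'Hôpital rule \cite[Theorem 1.120]{[1]}. Here the denominator $e_{\ominus\operatorname{Re}_{\mu}\gamma}(t,t_{0})$ diverges (being the reciprocal of the decaying exponential from the base case), while by (\ref{hk recursive}) the numerator satisfies $h_{k+1}^{\Delta}(t,t_{0})=\bigl(\lim_{s\rightarrow t}\tfrac{\Lambda(t,s)}{\sigma(t)-s}\bigr)h_{k}(t,t_{0})$. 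Differentiating numerator and denominator therefore converts the limit into one controlled by $h_{k}(t,t_{0})\,e_{\operatorname{Re}_{\mu}\gamma}(t,t_{0})$ multiplied by the factor $\bigl(1+\mu(t)\operatorname{Re}_{\mu}\gamma(t)\bigr)$ times the reciprocal of the quantity appearing inside the infimum in (\ref{infimum condtion}). The latter reciprocal is bounded above by $\bigl[\inf_{t\in[H,\infty)_{\mathbb{T}}}(\,\cdots)\bigr]^{-1}<\infty$ precisely because of (\ref{infimum condtion}).

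The main obstacle I anticipate is controlling the factor $1+\mu(t)\operatorname{Re}_{\mu}\gamma(t)$ produced by the differentiation: it must be shown uniformly bounded so that the induction hypothesis annihilates the entire expression. To handle this I would reuse the computation from the proof of Lemma \ref{lem5.2}, where $\left\vert1+\mu(t)\gamma(t)\right\vert=\lim_{s\rightarrow t}\bigl\vert\lambda^{\frac{1}{T}\Lambda(t,s)}\bigr\vert$, to obtain $1+\mu(t)\operatorname{Re}_{\mu}\gamma(t)\leq\max\{1,\left\vert\lambda\right\vert\}$. Combining this bound, the finite upper bound coming from (\ref{infimum condtion}), and the induction hypothesis $\lim_{t\rightarrow\infty}h_{k}(t,t_{0})\,e_{\operatorname{Re}_{\mu}\gamma}(t,t_{0})=0$ then yields $\lim_{t\rightarrow\infty}h_{k+1}(t,t_{0})\,e_{\operatorname{Re}_{\mu}\gamma}(t,t_{0})=0$, which closes the induction. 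A secondary point requiring verification is that the monotonicity and divergence hypotheses of the time-scale L'Hôpital theorem are genuinely met under (\ref{infimum condtion}), which again follows from the sign and boundedness-away-from-zero of $\operatorname{Re}_{\mu}\gamma$ guaranteed by that condition.
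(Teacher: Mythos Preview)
Your proposal is correct and follows essentially the same route as the paper's proof: reduce to $e_{\operatorname{Re}_{\mu}\gamma}$ via $\left\vert e_{\gamma}\right\vert=e_{\operatorname{Re}_{\mu}\gamma}$, handle $k=0$ by \cite{[19]}, and for the inductive step apply the time-scale L'H\^opital rule \cite[Theorem 1.120]{[1]} to the quotient $h_{k+1}/e_{\ominus\operatorname{Re}_{\mu}\gamma}$, then bound the resulting factor $1+\mu(t)\operatorname{Re}_{\mu}\gamma(t)$ by $\max\{1,|\lambda|\}$ and the remaining factor via the infimum hypothesis (\ref{infimum condtion}). The paper carries out exactly these steps, so your outline matches it in both strategy and detail.
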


\begin{theorem}
\label{thm5.5}Let $\left\{  \gamma_{i}\left(  t\right)  \right\}  _{i=1}^{n}$
be the set of conventional eigenvalues of the matrix $R(t)$ given in
(\ref{5.0.1}) and $\left\{  w_{i}\left(  t\right)  \right\}  _{i=1}^{n}$ be
the set of corresponding linearly independent dynamic eigenvectors as defined
by Lemma \ref{lem5.4}. Then, $\left\{  \gamma_{i}\left(  t\right)
,w_{i}\left(  t\right)  \right\}  _{i=1}^{n}$ is a set of dynamic eigenpairs
of $R(t)$ with the property that for each $1\leq i\leq n$ there are positive
constants $D_{i}>0$ such that%
\begin{equation}
\left\Vert w_{i}\left(  t\right)  \right\Vert \leq D_{i}%
%TCIMACRO{\dsum \limits_{k=0}^{m_{i}-1}}%
%BeginExpansion
{\displaystyle\sum\limits_{k=0}^{m_{i}-1}}
%EndExpansion
h_{k}\left(  t,t_{0}\right)  , \label{5.5}%
\end{equation}
holds where $h_{k}\left(  t,t_{0}\right)  $, $k=0,1,...,m_{i}-1$, are the
monomials defined as in (\ref{hk recursive}) and $m_{i}$ is the dimension of
the Jordan block which contains the $i^{th}$ eigenvalue, for all $1\leq i\leq
n$.
\end{theorem}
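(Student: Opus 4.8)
The plan is to reduce the statement to a constant–coefficient Jordan analysis of the monodromy matrix $M:=e_{R}\left(\delta_{+}^{T}(t_{0}),t_{0}\right)=\Phi_{A}\left(\delta_{+}^{T}(t_{0}),t_{0}\right)$, exactly as in the additive case of \cite[Theorem 7.5]{[3]}, the only genuinely new ingredient being the time change induced by $\Theta$. First I would record two identities. Differentiating $\lambda_{i}^{\frac{1}{T}\Theta(t)}$ and factoring out $\lambda_{i}^{\frac{1}{T}\Theta(s)}$ as $s\rightarrow t$, the definition of $\gamma_{i}$ recorded in Corollary \ref{cor1} gives
\[
\lambda_{i}^{\frac{1}{T}\Theta(t)}=e_{\gamma_{i}}(t,t_{0}),
\]
while evaluating the limit in $\Lambda$ at right–scattered and right–dense points separately yields
\[
\lim_{s\rightarrow\tau}\frac{\Lambda(\tau,s)}{\sigma(\tau)-s}=\Theta^{\Delta}(\tau),
\]
so that the monomials of Definition \ref{monomial} satisfy $h_{0}\equiv1$, $h_{k+1}^{\Delta}(t,t_{0})=\Theta^{\Delta}(t)\,h_{k}(t,t_{0})$ and $h_{k}(t_{0},t_{0})=0$ for $k\geq1$.

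Since $\Theta$ is strictly increasing and $\Delta$–differentiable with rd-continuous derivative, the substitution rule \cite[Theorem 1.98]{[1]} applied to $\nu=\Theta$ shows, by induction on the previous recursion, that
\[
h_{k}(t,t_{0})=\widehat{h}_{k}\bigl(\Theta(t),\Theta(t_{0})\bigr),
\]
where $\widehat{h}_{k}$ are the ordinary Taylor monomials on the image time scale $\widetilde{\mathbb{T}}:=\Theta(\mathbb{T})$. This identity is the device that transports the additive-case estimates of \cite{[3]} to the present setting.

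Next I would construct the eigenpairs explicitly from the Jordan form $M=SJS^{-1}$. By Corollary \ref{cor1} the matrices $M$ and $R(t)$ share the same generalized eigenstructure, so for a generalized eigenvector $s_{l}^{(i)}$ of $M$ lying in the block $J_{m_{i}}(\lambda_{i})$ the vector $\chi(t):=M^{\frac{1}{T}\Theta(t)}s_{l}^{(i)}=e_{R}(t,t_{0})s_{l}^{(i)}$ solves $z^{\Delta}=R(t)z$, whence $w_{i}(t):=e_{\gamma_{i}}^{-1}(t,t_{0})\chi(t)$ is a dynamic eigenvector for the eigenvalue $\gamma_{i}$ in the sense of \eqref{5.2}; as the vectors $w_{i}(t_{0})=s_{l}^{(i)}$ form the Jordan basis, this is a concrete realization of the linearly independent eigenvectors furnished by Lemma \ref{lem5.4}. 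Expanding the real power \eqref{real power of matrix} through the nilpotent part $N=M-\lambda_{i}I$ and cancelling $\lambda_{i}^{\frac{1}{T}\Theta(t)}=e_{\gamma_{i}}(t,t_{0})$ gives the closed form
\[
w_{i}(t)=\sum_{j=0}^{l-1}\binom{\tfrac{1}{T}\Theta(t)}{j}\lambda_{i}^{-j}\,s_{l-j}^{(i)},\qquad l\leq m_{i},
\]
so that each component of $w_{i}(t)$ is a polynomial of degree at most $m_{i}-1$ in $\Theta(t)$.

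The last step, which I expect to be the main obstacle, is to dominate these polynomials by $\sum_{k=0}^{m_{i}-1}h_{k}(t,t_{0})$. The difficulty is that on a general time scale $h_{k}(t,t_{0})\neq\Theta(t)^{k}/k!$, because the power rule introduces $\Theta^{\sigma}$–terms, so the bound cannot simply be read off. Here the substitution identity of the second paragraph is decisive: under $u=\Theta(t)$ the required estimate becomes a comparison between the generalized binomial coefficients $\binom{u/T}{j}$ and the Taylor monomials $\widehat{h}_{k}(u,0)$ on $\widetilde{\mathbb{T}}$, which is precisely the inequality used in the additive setting of \cite[Theorem 7.5]{[3]}. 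Since $\binom{u/T}{j}$ is a degree-$j$ polynomial in $u$ and the $\widehat{h}_{k}(u,0)$ are nonnegative and nondecreasing in $u$, one obtains constants $D_{i}>0$ with $\bigl|\binom{u/T}{j}\bigr|\,|\lambda_{i}|^{-j}\|s_{l-j}^{(i)}\|\leq D_{i}\sum_{k=0}^{m_{i}-1}\widehat{h}_{k}(u,0)$. Translating back through $h_{k}(t,t_{0})=\widehat{h}_{k}(\Theta(t),0)$ then yields \eqref{5.5}, completing the argument.
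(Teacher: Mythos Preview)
Your route is genuinely different from the paper's. The paper never passes to the image time scale $\widetilde{\mathbb{T}}=\Theta(\mathbb{T})$; instead it conjugates $R(t)$ by the Jordan basis matrix $S$ to obtain the block upper-triangular system $v^{\Delta}=\bigl(K_{i}(t)\ominus\gamma_{i}(t)I\bigr)v$, solves this recursively for the components $v_{i,j}$, and bounds each integral directly by the monomials $h_{k}$ using the crucial inequality $0\leq\Lambda(\tau,s)\leq T$ as $s\to\tau$. Your substitution idea is more conceptual and, because each term $\delta_{-}\bigl(\delta_{+}^{(j-1)}(T,t_{0}),\delta_{+}^{(j)}(T,t_{0})\bigr)=T$ by the shift axioms, $\widetilde{\mathbb{T}}$ is in fact additive $T$-periodic with graininess bounded by $T$; so the reduction is legitimate.

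The gap is in your last paragraph. The sentence ``since $\binom{u/T}{j}$ is a degree-$j$ polynomial in $u$ and the $\widehat{h}_{k}(u,0)$ are nonnegative and nondecreasing in $u$, one obtains constants $D_{i}$'' is not a proof: on a time scale with unbounded graininess one can have $\widehat{h}_{k}(u,0)=o(u^{k})$, so degree and monotonicity alone do not give the comparison. What actually makes the estimate work is precisely the bound $\widetilde{\mu}\leq T$ on $\widetilde{\mathbb{T}}$, equivalent to the paper's inequality $\Lambda(\tau,s)\leq T$, and you neither state nor use it. Moreover, the appeal to \cite[Theorem~7.5]{[3]} is misplaced: that result (mirrored here as Theorem~\ref{thm5.4.1}) concerns stability via mode vectors, and the eigenvector bound in \cite{[3]} analogous to Theorem~\ref{thm5.5} is proved by exactly the triangular-system computation the present paper reproduces, not by a polynomial-versus-monomial comparison. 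To close the argument you must either (i) prove directly that on a time scale with graininess bounded by $T$ one has $u^{j}\leq C_{j}\sum_{k\leq j}\widehat{h}_{k}(u,0)$, or (ii) verify that your change of variables carries the full setup of \cite{[3]} to $\widetilde{\mathbb{T}}$ and then invoke the correct theorem there; either way the missing ingredient is the graininess bound, which is the heart of the paper's proof.
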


\begin{proof}
By Lemma \ref{lem5.4}, it is obvious that, $\left\{  \gamma_{i}\left(
t\right)  ,w_{i}\left(  t\right)  \right\}  _{i=1}^{n}$ is the set of
eigenpairs of $R\left(  t\right)  $. First, there exists an appropriate
$n\times n$ constant, nonsingular matrix $S$ which transforms $\Phi_{A}\left(
\delta_{+}^{T}\left(  t_{0}\right)  ,t_{0}\right)  $ to its Jordan canonical
form given by
\begin{align}
J  &  :=S^{-1}\Phi_{A}\left(  \delta_{+}^{T}\left(  t_{0}\right)
,t_{0}\right)  S\nonumber\\
&  =\left[
\begin{array}
[c]{cccc}%
J_{m_{1}}\left(  \lambda_{1}\right)  &  &  & \\
& J_{m_{2}}\left(  \lambda_{2}\right)  &  & \\
&  & \ddots & \\
&  &  & J_{m_{d}}\left(  \lambda_{d}\right)
\end{array}
\right]  _{n\times n}, \label{5.6}%
\end{align}
where $d\leq n$, $\sum_{i=1}^{d}m_{i}=n$, $\lambda_{i}$ are the eigenvalues of
$\Phi_{A}\left(  \delta_{+}^{T}\left(  t_{0}\right)  ,t_{0}\right)  $. By
utilizing above determined matrix $S$, we define the following:%
\begin{align*}
K\left(  t\right)   &  :=S^{-1}R\left(  t\right)  S\\
&  =S^{-1}\left(  \lim_{s\rightarrow t}\frac{\Phi_{A}\left(  \delta_{+}%
^{T}\left(  t_{0}\right)  ,t_{0}\right)  ^{\frac{1}{T}\Lambda(t,s)}-I}%
{\sigma\left(  t\right)  -s}\right)  S\\
&  =\lim_{s\rightarrow t}\frac{S^{-1}\Phi_{A}\left(  \delta_{+}^{T}\left(
t_{0}\right)  ,t_{0}\right)  ^{\frac{1}{T}\Lambda(t,s)}S-I}{\sigma\left(
t\right)  -s}.
\end{align*}
This along with \cite[Theorem A.6]{[3]} yields%
\[
K\left(  t\right)  =\lim_{s\rightarrow t}\frac{J^{\frac{1}{T}\Lambda(t,s)}%
-I}{\sigma\left(  t\right)  -s}.
\]

Note that, $K\left(  t\right)  $ has the block diagonal form
\[
K\left(  t\right)  =diag\left[  K_{1}\left(  t\right)  ,\ldots,K_{d}\left(
t\right)  \right]
\]
in which each $K_{i}\left(  t\right)  $ given by%
\[
K_{i}\left(  t\right)  :=\lim_{s\rightarrow t}K_{i}\left(  t\right)
:=\lim_{s\rightarrow t}\left[
\begin{array}
[c]{cccc}%
\frac{\lambda_{i}^{\frac{1}{T}\Lambda(t,s)}-1}{\sigma\left(  t\right)  -s} &
\frac{\frac{1}{T}\Lambda(t,s)\lambda_{i}^{\frac{1}{T}\Lambda(t,s)-1}}%
{(\sigma\left(  t\right)  -s)2!} & \ldots & \frac{\left(
%TCIMACRO{\dprod \limits_{k=0}^{n-2}}%
%BeginExpansion
{\displaystyle\prod\limits_{k=0}^{n-2}}
%EndExpansion
\left[  \frac{1}{T}\Lambda(t,s)-k\right]  \right)  \lambda_{i}^{\frac{1}%
{T}\Lambda(t,s)-n+1}}{\left(  n-1\right)  !\left(  \sigma\left(  t\right)
-s\right)  }\\
& \frac{\lambda_{i}^{\frac{1}{T}\Lambda(t,s)}-1}{\sigma\left(  t\right)  -s} &
\ldots & \frac{\left(
%TCIMACRO{\dprod \limits_{k=0}^{n-3}}%
%BeginExpansion
{\displaystyle\prod\limits_{k=0}^{n-3}}
%EndExpansion
\left[  \frac{1}{T}\Lambda(t,s)-k\right]  \right)  \lambda_{i}^{\frac{1}%
{T}\Lambda(t,s)-n+2}}{\left(  n-2\right)  !\left(  \sigma\left(  t\right)
-s\right)  }\\
&  & \ddots & \vdots\\
&  &  & \frac{\lambda_{i}^{\frac{1}{T}\Lambda(t,s)}-1}{\sigma\left(  t\right)
-s}%
\end{array}
\right]  _{m_{i}\times m_{i}}.
\]

It should be mentioned that, since $R\left(  t\right)  $ and $K\left(
t\right)  $ are similar, they have the same conventional eigenvalues%
\[
\gamma_{i}\left(  t\right)  =\lim_{s\rightarrow t}\left(  \frac{\lambda
_{i}^{\frac{1}{T}[\Lambda(t,s)]}-1}{\sigma\left(  t\right)  -s}\right)
\text{, }i=1,2,...,n,
\]
with corresponding multiplicities. Moreover, if we set the dynamic eigenvalues
of $K(t)$ to be same as conventional eigenvalues $\gamma_{i}\left(  t\right)
$, then the corresponding dynamic eigenvectors $\left\{  u_{i}\left(
t\right)  \right\}  _{i=1}^{n}$ of $K\left(  t\right)  $ can be given by
$u_{i}\left(  t\right)  =S^{-1}w_{i}\left(  t\right)  $.

We can prove this claim by showing that $\left\{  \gamma_{i}\left(  t\right)
,u_{i}\left(  t\right)  \right\}  _{i=1}^{n}$ is a set of dynamic eigenpairs
of $K\left(  t\right)  .$ By Definition \ref{def5.3}, we can write that%
\begin{align}
u_{i}^{\Delta}\left(  t\right)   &  =S^{-1}w_{i}^{\Delta}\left(  t\right)
\nonumber\\
&  =S^{-1}R\left(  t\right)  w_{i}\left(  t\right)  -S^{-1}\gamma_{i}\left(
t\right)  w_{i}^{\sigma}\left(  t\right) \nonumber\\
&  =K\left(  t\right)  S^{-1}w_{i}\left(  t\right)  -\gamma_{i}\left(
t\right)  S^{-1}w_{i}^{\sigma}\left(  t\right) \nonumber\\
&  =K\left(  t\right)  u_{i}\left(  t\right)  -\gamma_{i}\left(  t\right)
u_{i}^{\sigma}\left(  t\right)  , \label{5.7}%
\end{align}
for all $1\leq i\leq n$ and this proves our claim. Now, we have to show that
$u_{i}\left(  t\right)  $ satisfies (\ref{5.5}). Since $\left\{  \gamma
_{i}\left(  t\right)  ,u_{i}\left(  t\right)  \right\}  _{i=1}^{n}$ is the set
of dynamic eigenpairs of $K\left(  t\right)  ,$ it satisfies (\ref{5.7}) for
all $1\leq i\leq n.$ By choosing the $i^{th}$ block of $K\left(  t\right)  $
with dimension $m_{i}\times m_{i}$, we can construct the following linear
dynamic system:%
\begin{equation}
v^{\Delta}\left(  t\right)  =\tilde{K}_{i}\left(  t\right)  v(t)
=\lim_{s\rightarrow t}\left[
\begin{array}
[c]{ccccc}%
0 & \frac{\frac{1}{T}\Lambda(t,s)}{\left(  \sigma\left(  t\right)  -s\right)
\lambda_{i}} & \frac{\left(  \frac{1}{T}\Lambda(t,s)\right)  \left(  \frac
{1}{T}\Lambda(t,s)-1\right)  }{\left(  \sigma\left(  t\right)  -s\right)
\lambda_{i}2!} & \ldots & \frac{\left(
%TCIMACRO{\dprod \limits_{k=0}^{n-2}}%
%BeginExpansion
{\displaystyle\prod\limits_{k=0}^{n-2}}
%EndExpansion
\left[  \frac{1}{T}\Lambda(t,s)-k\right]  \right)  }{\left(  n-1\right)
!\left(  \sigma\left(  t\right)  -s\right)  \lambda_{i}^{n-1}}\\
& 0 & \frac{\frac{1}{T}\Lambda(t,s)}{\left(  \sigma\left(  t\right)
-s\right)  \lambda_{i}} &  & \frac{\left(
%TCIMACRO{\dprod \limits_{k=0}^{n-3}}%
%BeginExpansion
{\displaystyle\prod\limits_{k=0}^{n-3}}
%EndExpansion
\left[  \frac{1}{T}\Lambda(t,s)-k\right]  \right)  }{\left(  n-2\right)
!\left(  \sigma\left(  t\right)  -s\right)  \lambda_{i}^{n-2}}\\
&  & 0 & \ddots & \vdots\\
&  &  & \ddots & \frac{\frac{1}{T}\Lambda(t,s)}{\left(  \sigma\left(
t\right)  -s\right)  \lambda_{i}}\\
&  &  &  & 0
\end{array}
\right] v(t) , \label{5.8}%
\end{equation}
where $\tilde{K}_{i}\left(  t\right)  \left(  t\right)  :=K_{i}\left(
t\right)  \ominus\gamma_{i}\left(  t\right)  I.$ There are $m_{i}$ linearly
independent solutions of (\ref{5.8}). Let us denote these solutions by
$v_{i,j}\left(  t\right)  $, where $i$ corresponds to the $i^{th}$ block
matrix $K_{i}\left(  t\right)  $ and $j=1,\ldots,m_{i}$. For $1\leq i\leq d$,
we define $l_{i}=%
%TCIMACRO{\dsum \limits_{s=0}^{i-1}}%
%BeginExpansion
{\displaystyle\sum\limits_{s=0}^{i-1}}
%EndExpansion
m_{s}$, with $m_{0}=0$. Then, the form of an arbitrary $n\times1$ column
vector $u_{l_{i}+j}$ for $i\leq j\leq m$ can be given as%
\begin{equation}
u_{l_{i}+j}=[\underset{m_{1}+\ldots+m_{i-1}}{\underbrace{0,\ldots,0}%
},\underset{m_{i}}{\underbrace{v_{i,j}^{T}\left(  t\right)  }},\underset
{m_{i+1},\ldots,m_{d}}{\underbrace{0,\ldots,0}}]_{1\times n}. \label{5.9}%
\end{equation}
When we consider the all vector solutions of (\ref{5.7}), the solution of the
$n\times n$ matrix dynamic equation%
\[
U^{\Delta}\left(  t\right)  =K\left(  t\right)  U\left(  t\right)  -U^{\sigma
}\left(  t\right)  \Gamma\left(  t\right)  ,
\]
where $\Gamma\left(  t\right)  :=diag\left[  \gamma_{1}\left(  t\right)
,\ldots,\gamma_{n}\left(  t\right)  \right]  ,$ can be written as%
\begin{align*}
U\left(  t\right)   &  :=\left[  u_{1},\ldots,u_{m_{1}},\ldots,u_{\left(
\sum_{k=1}^{i-1}m_{k}\right)  },\ldots,u_{\left(  \sum_{k=1}^{i}m_{k}\right)
},\ldots,u_{\left(  \sum_{k=1}^{d}m_{k}\right)  -1},u_{n}\right] \\
&  =\left[
\begin{array}
[c]{ccc}%
\left[
\begin{array}
[c]{cccc}%
v_{1,1} & v_{1,2} & \ldots & v_{1,m_{1}}\\
& v_{1,1} & \ddots & v_{1,m_{1}-1}\\
&  & \ddots & \vdots\\
&  &  & v_{1,1}%
\end{array}
\right]  _{m_{1}\times m_{1}} &  & \\
& \ddots & \\
&  & \left[
\begin{array}
[c]{cccc}%
v_{d,1} & v_{d,2} & \ldots & v_{d,m_{d}}\\
& v_{d,1} & \ddots & v_{d,m_{d}-1}\\
&  & \ddots & \vdots\\
&  &  & v_{d,1}%
\end{array}
\right]  _{m_{d}\times m_{d}}%
\end{array}
\right]  _{n\times n}.
\end{align*}
The $m_{i}$ linearly independent solutions of (\ref{5.8}) have the form%
\begin{align*}
v_{i,1}\left(  t\right)   &  :=\left[  v_{i,m_{i}}\left(  t\right)
,0,\ldots,0\right]  _{m_{i}\times1}^{T},\\
v_{i,2}\left(  t\right)   &  :=\left[  v_{i,m_{i}-1}\left(  t\right)
,v_{i,m_{i}}\left(  t\right)  ,0,\ldots,0\right]  _{m_{i}\times1}^{T},\\
&  \vdots\\
v_{i,m_{i}}\left(  t\right)   &  :=\left[  v_{i,1}\left(  t\right)
,v_{i,2}\left(  t\right)  ,\ldots,v_{i,m_{i}-1}\left(  t\right)  ,v_{i,m_{i}%
}\left(  t\right)  \right]  _{m_{i}\times1}^{T}.
\end{align*}
Then, we have the dynamic equations%
\begin{align*}
v_{i,m_{i}}^{\Delta}\left(  t\right)   &  =0,\\
v_{i,m_{i}-1}^{\Delta}\left(  t\right)   &  =\lim_{s\rightarrow t}%
\frac{\left[  \Lambda(t,s)\right]  }{T(\sigma(t)-s)\lambda_{i}}v_{i,m_{i}%
}\left(  t\right)  ,\\
v_{i,m_{i}-2}^{\Delta}(t)  &  =\lim_{s\rightarrow t}\frac{\left(
%TCIMACRO{\dprod \limits_{k=0}^{1}}%
%BeginExpansion
{\displaystyle\prod\limits_{k=0}^{1}}
%EndExpansion
[\frac{1}{T}\Lambda(t,s)-k]\right)  }{2(\sigma(t)-s)\lambda_{i}^{2}}%
v_{i,m_{i}}\left(  t\right)  +\lim_{s\rightarrow t}\frac{\Lambda
(t,s)}{T(\sigma(t)-s)\lambda_{i}}v_{i,m_{i}-1}\left(  t\right)  ,\\
&  \vdots\\
v_{i,1}^{\Delta}(t)  &  =\lim_{s\rightarrow t}\frac{\left(
%TCIMACRO{\dprod \limits_{k=0}^{m_{i}-2}}%
%BeginExpansion
{\displaystyle\prod\limits_{k=0}^{m_{i}-2}}
%EndExpansion
[\frac{1}{T}\Lambda(t,s)-k]\right)  }{\left(  m_{i}-1\right)  !(\sigma
(t)-s)\lambda_{i}^{m_{i}-1}}v_{i,m_{i}}\left(  t\right) \\
&  +\lim_{s\rightarrow t}\frac{\left(
%TCIMACRO{\dprod \limits_{k=0}^{m_{i}-3}}%
%BeginExpansion
{\displaystyle\prod\limits_{k=0}^{m_{i}-3}}
%EndExpansion
[\frac{1}{T}\Lambda(t,s)-k]\right)  }{\left(  m_{i}-2\right)  !(\sigma
(t)-s)\lambda^{m_{i}-2}}v_{i,m_{i}-1}\left(  t\right)  +\\
&  \ldots+\lim_{s\rightarrow t}\frac{\left(
%TCIMACRO{\dprod \limits_{k=0}^{1}}%
%BeginExpansion
{\displaystyle\prod\limits_{k=0}^{1}}
%EndExpansion
[\frac{1}{T}\Lambda(t,s)-k]\right)  }{2(\sigma(t)-s)\lambda_{i}^{2}}%
v_{i,3}\left(  t\right)  +\lim_{s\rightarrow t}\frac{\Lambda(t,s)}%
{T(\sigma(t)-s)\lambda_{i}}v_{i,2}\left(  t\right)  .
\end{align*}
Moreover, we have the following solutions:%
\[
v_{i,m_{i}}\left(  t\right)  =1,\ \ v_{i,m_{i}-1}\left(  t\right)  =%
%TCIMACRO{\dint \limits_{t_{0}}^{t}}%
%BeginExpansion
{\displaystyle\int\limits_{t_{0}}^{t}}
%EndExpansion
\lim_{s\rightarrow\tau}\frac{\Lambda(\tau,s)}{T(\sigma(\tau)-s)\lambda_{i}%
}v_{i,m_{i}}\left(  \tau\right)  \Delta\tau,
\]%
\begin{align*}
v_{i,m_{i}-2}\left(  t\right)   &  =%
%TCIMACRO{\dint \limits_{t_{0}}^{t}}%
%BeginExpansion
{\displaystyle\int\limits_{t_{0}}^{t}}
%EndExpansion
\lim_{s\rightarrow\tau}\frac{\left(
%TCIMACRO{\dprod \limits_{k=0}^{1}}%
%BeginExpansion
{\displaystyle\prod\limits_{k=0}^{1}}
%EndExpansion
[\frac{1}{T}\Lambda(\tau,s)-k]\right)  }{2(\sigma(\tau)-s)\lambda_{i}^{2}%
}v_{i,m_{i}}\left(  \tau\right)  \Delta\tau+%
%TCIMACRO{\dint \limits_{t_{0}}^{t}}%
%BeginExpansion
{\displaystyle\int\limits_{t_{0}}^{t}}
%EndExpansion
\lim_{s\rightarrow\tau}\frac{\Lambda(\tau,s)}{T(\sigma(\tau)-s)\lambda_{i}%
}v_{i,m_{i}-1}\left(  \tau\right)  \Delta\tau,\\
&  \vdots\\
v_{i,1}(t)  &  =%
%TCIMACRO{\dint \limits_{t_{0}}^{t}}%
%BeginExpansion
{\displaystyle\int\limits_{t_{0}}^{t}}
%EndExpansion
\lim_{s\rightarrow\tau}\frac{\left(
%TCIMACRO{\dprod \limits_{k=0}^{m_{i}-2}}%
%BeginExpansion
{\displaystyle\prod\limits_{k=0}^{m_{i}-2}}
%EndExpansion
\frac{1}{T}\Lambda(\tau,s)-k]\right)  }{\left(  m_{i}-1\right)  !(\sigma
(\tau)-s)\lambda_{i}^{m_{i}-1}}v_{i,m_{i}}\left(  \tau\right)  \Delta\tau\\
&  +%
%TCIMACRO{\dint \limits_{t_{0}}^{t}}%
%BeginExpansion
{\displaystyle\int\limits_{t_{0}}^{t}}
%EndExpansion
\lim_{s\rightarrow\tau}\frac{\left(
%TCIMACRO{\dprod \limits_{k=0}^{m_{i}-3}}%
%BeginExpansion
{\displaystyle\prod\limits_{k=0}^{m_{i}-3}}
%EndExpansion
\frac{1}{T}\Lambda(\tau,s)-k]\right)  }{\left(  m_{i}-2\right)  !(\sigma
(\tau)-s)\lambda_{i}^{m_{i}-2}}v_{i,m_{i}-1}(\tau)\Delta\tau+\\
&  \ldots+%
%TCIMACRO{\dint \limits_{t_{0}}^{t}}%
%BeginExpansion
{\displaystyle\int\limits_{t_{0}}^{t}}
%EndExpansion
\lim_{s\rightarrow\tau}\frac{\Lambda(\tau,s)}{T(\sigma(\tau)-s)\lambda_{i}%
}v_{i,2}\left(  \tau\right)  \Delta\tau.
\end{align*}
Then we can show that each $v_{i,j}$ is bounded. There exist constants
$B_{i,j},$ $i=1,\ldots,d$ and $j=1,\ldots,m_{i},$ such that%
\begin{align*}
\left\vert v_{i,m_{i}}\left(  t\right)  \right\vert  &  =1\leq B_{i,m_{i}%
}h_{0}\left(  t,t_{0}\right)  =B_{i,m_{i}},\\
\left\vert v_{i,m_{i}-1}\left(  t\right)  \right\vert  &  \leq%
%TCIMACRO{\dint \limits_{t_{0}}^{t}}%
%BeginExpansion
{\displaystyle\int\limits_{t_{0}}^{t}}
%EndExpansion
\lim_{s\rightarrow\tau}\left(  \frac{\Lambda(\tau,s)}{T(\sigma(\tau
)-s)\lambda_{i}}\right)  v_{i,m_{i}}\left(  \tau\right)  \Delta\tau\leq
\frac{1}{T\lambda_{i}}%
%TCIMACRO{\dint \limits_{t_{0}}^{t}}%
%BeginExpansion
{\displaystyle\int\limits_{t_{0}}^{t}}
%EndExpansion
\lim_{s\rightarrow\tau}\left(  \frac{\Lambda(\tau,s)}{\sigma(\tau)-s}\right)
h_{0}\left(  \tau,t_{0}\right)  \Delta\tau\\
&  \leq\frac{h_{1}\left(  t,t_{0}\right)  }{T\lambda_{i}}\leq B_{i,m_{i}%
-1}h_{1}\left(  t,t_{0}\right)  ,\\
\left\vert v_{i,m_{i}-2}\left(  t\right)  \right\vert  &  \leq%
%TCIMACRO{\dint \limits_{t_{0}}^{t}}%
%BeginExpansion
{\displaystyle\int\limits_{t_{0}}^{t}}
%EndExpansion
\left\vert \lim_{s\rightarrow\tau}\frac{\left(
%TCIMACRO{\dprod \limits_{k=0}^{1}}%
%BeginExpansion
{\displaystyle\prod\limits_{k=0}^{1}}
%EndExpansion
[\frac{1}{T}\Lambda(\tau,s)-k]\right)  }{2(\sigma(\tau)-s)\lambda_{i}^{2}%
}\right\vert v_{i,m_{i}}\left(  \tau\right)  \Delta\tau\\
&  +%
%TCIMACRO{\dint \limits_{t_{0}}^{t}}%
%BeginExpansion
{\displaystyle\int\limits_{t_{0}}^{t}}
%EndExpansion
\lim_{s\rightarrow\tau}\left(  \frac{\Lambda(\tau,s)}{T(\sigma(\tau
)-s)\lambda_{i}}\right)  v_{i,m_{i}-1}\left(  \tau\right)  \Delta\tau.
\end{align*}
Since%
\[
0\leq\Theta\left(  \sigma\left(  \tau\right)  \right)  -\Theta\left(
s\right)  \leq T\text{ as }s\rightarrow\tau,
\]
we get%
\[
\left\vert \frac{1}{T}\Lambda(\tau,s)-k\right\vert \leq k\text{ as
}s\rightarrow\tau\text{ for }k=1,2,...\text{.}%
\]
Then%
\begin{align*}
\left\vert v_{m_{i}-2}\left(  t\right)  \right\vert  &  \leq\frac{1}%
{2T\lambda_{i}^{2}}%
%TCIMACRO{\dint \limits_{t_{0}}^{t}}%
%BeginExpansion
{\displaystyle\int\limits_{t_{0}}^{t}}
%EndExpansion
\lim_{s\rightarrow\tau}\left(  \frac{\Lambda(\tau,s)}{\sigma(\tau)-s}\right)
h_{0}\left(  \tau,t_{0}\right)  \Delta\tau\\
&  +\frac{1}{T^{2}\lambda_{i}^{2}}%
%TCIMACRO{\dint \limits_{t_{0}}^{t}}%
%BeginExpansion
{\displaystyle\int\limits_{t_{0}}^{t}}
%EndExpansion
\lim_{s\rightarrow\tau}\left(  \frac{\Lambda(\tau,s)}{\sigma(\tau)-s}\right)
h_{1}\left(  \tau,t_{0}\right)  \Delta\tau\\
&  =\frac{h_{1}\left(  t,t_{0}\right)  }{2T\lambda_{i}^{2}}+\frac{h_{2}\left(
t,t_{0}\right)  }{T^{2}\lambda_{i}^{2}}\\
&  \leq B_{i,m_{i}-2}%
%TCIMACRO{\dsum \limits_{j=1}^{2}}%
%BeginExpansion
{\displaystyle\sum\limits_{j=1}^{2}}
%EndExpansion
h_{j}\left(  t,t_{0}\right) \\
&  \vdots\\
\left\vert v_{1}\right\vert  &  \leq B_{i,1}%
%TCIMACRO{\dsum \limits_{j=1}^{m_{i}-1}}%
%BeginExpansion
{\displaystyle\sum\limits_{j=1}^{m_{i}-1}}
%EndExpansion
h_{j}\left(  t,t_{0}\right)  .
\end{align*}
If we set $\beta_{i}:=\max_{j=1,\ldots,m_{i}}\left\{  B_{i,j}\right\}  $ for
each $1\leq i\leq d$, we obtain%
\[
\left\Vert u_{l_{i}+j}\left(  t\right)  \right\Vert \leq\beta_{i}%
%TCIMACRO{\dsum \limits_{k=0}^{m_{i}-1}}%
%BeginExpansion
{\displaystyle\sum\limits_{k=0}^{m_{i}-1}}
%EndExpansion
h_{k}\left(  t,t_{0}\right)
\]
for $1\leq i\leq d$ and $j=1,2,...,m_{i}$. Since $w_{i}=Su_{i}$ we have
\begin{align*}
\left\Vert w_{i}\left(  t\right)  \right\Vert  &  =\left\Vert Su_{i}\left(
t\right)  \right\Vert \leq\left\Vert S\right\Vert \beta_{i}%
%TCIMACRO{\dsum \limits_{k=0}^{m_{i}-1}}%
%BeginExpansion
{\displaystyle\sum\limits_{k=0}^{m_{i}-1}}
%EndExpansion
h_{k}\left(  t,t_{0}\right) \\
&  =D_{i}%
%TCIMACRO{\dsum \limits_{k=0}^{m_{i}-1}}%
%BeginExpansion
{\displaystyle\sum\limits_{k=0}^{m_{i}-1}}
%EndExpansion
h_{k}\left(  t,t_{0}\right)  ,
\end{align*}
where $D_{i}:=\left\Vert S\right\Vert \beta_{i}$, for all $1\leq i\leq n$. The
proof is complete.
\end{proof}

\begin{definition}
\label{def5.8}\cite[Definition 7.8]{[3]} Let $\mathbb{C}_{\mu}:=\left\{
z\in\mathbb{C}:z\neq-\frac{1}{\mu(t)}\right\}  .$ Given an element
$t\in\mathbb{T}^{k}$ with $\mu\left(  t\right)  >0,$ the Hilger circle is
defined by%
\[
\mathcal{H}_{t}:=\left\{  z\in\mathbb{C}_{\mu}:\operatorname{Re}_{\mu
}(z)<0\right\}  .
\]
If $\mu\left(  t\right)  =0,$ Hilger circle becomes
\[
\mathcal{H}_{t}:=\left\{  z\in\mathbb{C}:\operatorname{Re}(z)<0\right\}  .
\]

\end{definition}

Now, we can state the main stability theorem. This theorem shows strong
relationship between the stability results of the $T$-periodic time varying
linear dynamic system (\ref{5.0}) and the eigenvalues of the corresponding
time varying linear dynamic system (\ref{5.1}).

\begin{theorem}
[Floquet stability theorem]\label{thm5.9}Let $\mathbb{T}$ be a periodic time
scale in shifts that is unbounded above$.$ We get the following stability
results of the solutions of the system (\ref{5.0}) based on the eigenvalues
$\{\gamma_{i}(t)\}_{i=1}^{n}$ of system (\ref{5.1}):

\begin{enumerate}
\item \label{condition 1}If there is a positive constant $H$ such that $\ $%
\begin{equation}
\inf_{t\in\lbrack H,\infty)_{\mathbb{T}}}\left[  -\left(  \lim_{s\rightarrow
t}\left(  \frac{\Lambda(t,s)}{\sigma(t)-s}\right)  \right)  ^{-1}%
\operatorname{Re}_{\mu}\gamma_{i}\left(  t\right)  \right]  >0
\label{inf cond}%
\end{equation}
for all $i=1,\ldots,n$, then the system (\ref{5.0}) is asymptotically stable.
Moreover, if there are positive constants $H$ and $\varepsilon$ such that
(\ref{inf cond}) and%
\begin{equation}
-\operatorname{Re}_{\mu}\gamma_{i}\left(  t\right)  \geq\varepsilon
\label{cond eps}%
\end{equation}
for all $t\in\lbrack H,\infty)_{\mathbb{T}}$ and all $i=1,\ldots,n$, then the
system (\ref{5.0}) is exponentially stable.

\item If there is a positive constant $H$ such that $\ $%
\begin{equation}
\inf_{t\in\lbrack H,\infty)_{\mathbb{T}}}\left[  -\left(  \lim_{s\rightarrow
t}\left(  \frac{\Lambda(t,s)}{\sigma(t)-s}\right)  \right)  ^{-1}%
\operatorname{Re}_{\mu}\gamma_{i}\left(  t\right)  \right]  \geq0
\end{equation}
for all $i=1,\ldots,n$, and if, for each characteristic exponent with%
\[
\operatorname{Re}_{\mu}\left(  \gamma_{i}\left(  t\right)  \right)  =0\text{
for all }t\in\lbrack H,\infty)_{\mathbb{T}},
\]
the algebraic multiplicity equals the geometric multiplicity, then the system
(\ref{5.0}) is stable; otherwise the system (\ref{5.0}) is unstable.

\item If there exists a number $H\in\mathbb{R}$ such that%
\[
\operatorname{Re}_{\mu}\left(  \gamma_{i}\left(  t\right)  \right)  >0
\]
for all $t\in\lbrack H,\infty)_{\mathbb{T}}$ and some $i=1,\ldots,n$, then the
system (\ref{5.0}) is unstable.
\end{enumerate}
\end{theorem}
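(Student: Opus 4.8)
The plan is to reduce the stability analysis of (\ref{5.0}) to that of the uniformly regressive system (\ref{5.1}) and then invoke the mode-vector characterization of Theorem \ref{thm5.4.1}. By Theorem \ref{thm2} the solutions of the two systems are related through $x(t)=L(t)z(t)$, where $L$ is the Lyapunov transformation (\ref{3.6}). Since $\left\Vert L(t)\right\Vert \leq\rho$ and $\left\vert \det L(t)\right\vert \geq\eta$ for all $t$ (so that $\left\Vert L^{-1}(t)\right\Vert$ is likewise bounded), the estimates $\left\Vert x(t)\right\Vert \leq\rho\left\Vert z(t)\right\Vert$ and $\left\Vert z(t)\right\Vert \leq\left\Vert L^{-1}(t)\right\Vert \left\Vert x(t)\right\Vert$ show that uniform, asymptotic, and exponential stability of (\ref{5.0}) hold if and only if they hold for (\ref{5.1}). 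Thus it suffices to characterize the stability of (\ref{5.1}) via the eigenvalues $\gamma_{i}(t)$ of $R(t)$, whose mode vectors in the sense of Definition \ref{def5.3} are $\chi_{i}(t)=e_{\gamma_{i}}(t,t_{0})w_{i}(t)$; Theorem \ref{thm5.4.1} then equates stability with boundedness, decay, or exponential decay of the $\chi_{i}$.

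First I would record the growth estimate for $\chi_{i}$. Combining the bound (\ref{5.5}) from Theorem \ref{thm5.5} with the standard identity $\left\vert e_{\gamma_{i}}(t,t_{0})\right\vert =e_{\operatorname{Re}_{\mu}\gamma_{i}}(t,t_{0})$ gives
\[
\left\Vert \chi_{i}(t)\right\Vert \leq D_{i}\sum_{k=0}^{m_{i}-1}h_{k}(t,t_{0})\,e_{\operatorname{Re}_{\mu}\gamma_{i}}(t,t_{0}).
\]
For part (1), the infimum condition (\ref{inf cond}) is exactly the hypothesis (\ref{infimum condtion}) of Lemma \ref{lem5.6} applied to each $\gamma_{i}$, so every summand $h_{k}(t,t_{0})e_{\operatorname{Re}_{\mu}\gamma_{i}}(t,t_{0})\to0$; as the sum is finite and each $h_{k}\geq0$, I conclude $\left\Vert \chi_{i}(t)\right\Vert \to0$, which yields both boundedness and decay and hence asymptotic stability by Theorem \ref{thm5.4.1}(2). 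For the exponential statement, the extra hypothesis (\ref{cond eps}) gives $\operatorname{Re}_{\mu}\gamma_{i}(t)\leq-\varepsilon$; I would choose $\lambda>0$ with $-\lambda\in\mathcal{R}^{+}$ and $\lambda$ small enough that $\operatorname{Re}_{\mu}\gamma_{i}\ominus\lambda$ still satisfies the infimum condition, and then write $h_{k}(t,t_{0})e_{\operatorname{Re}_{\mu}\gamma_{i}}(t,t_{0})=\bigl(h_{k}(t,t_{0})e_{\operatorname{Re}_{\mu}\gamma_{i}\ominus\lambda}(t,t_{0})\bigr)e_{\lambda}(t,t_{0})$. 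The parenthesised factor tends to $0$ by Lemma \ref{lem5.6}, hence is bounded, so $\left\Vert \chi_{i}(t)\right\Vert \leq\gamma e_{\lambda}(t,t_{0})$ and Theorem \ref{thm5.4.1}(3) gives exponential stability.

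For parts (2) and (3) I would argue through the Jordan structure of $M=\Phi_{A}(\delta_{+}^{T}(t_{0}),t_{0})$ used in the proof of Theorem \ref{thm5.5}, where $m_{i}$ is the size of the Jordan block carrying $\gamma_{i}$. In part (2) the weak infimum condition together with $\left\vert e_{\gamma_{i}}\right\vert =e_{\operatorname{Re}_{\mu}\gamma_{i}}$ keeps each $\chi_{i}$ bounded whenever $\operatorname{Re}_{\mu}\gamma_{i}<0$; for eigenvalues with $\operatorname{Re}_{\mu}\gamma_{i}\equiv0$ the equality of algebraic and geometric multiplicities forces $m_{i}=1$, so only $h_{0}\equiv1$ appears and $\left\Vert \chi_{i}(t)\right\Vert \leq D_{i}$ stays bounded, giving stability by Theorem \ref{thm5.4.1}(1); conversely, if a unit-modulus eigenvalue has $m_{i}>1$, the genuine growth of $h_{1},\dots,h_{m_{i}-1}$ against the non-decaying factor $e_{\operatorname{Re}_{\mu}\gamma_{i}}\equiv1$ makes $\chi_{i}$ unbounded, forcing instability. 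Part (3) is the most direct: if $\operatorname{Re}_{\mu}\gamma_{i}(t)>0$ on $[H,\infty)_{\mathbb{T}}$, then $\left\vert e_{\gamma_{i}}(t,t_{0})\right\vert =e_{\operatorname{Re}_{\mu}\gamma_{i}}(t,t_{0})\to\infty$ while the leading ($h_{0}$) component of $w_{i}$ remains bounded away from zero, so $\left\Vert \chi_{i}(t)\right\Vert \to\infty$ and stability fails. I expect the genuinely delicate steps to be the exponential estimate in part (1)—controlling the monomials $h_{k}$ against a sharpened exponential through the operation $\ominus$—and, in part (2), rigorously linking the coincidence of algebraic and geometric multiplicity to triviality of the Jordan blocks ($m_{i}=1$) and thereby to the absence of unbounded monomials.
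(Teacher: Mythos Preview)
Your overall strategy coincides with the paper's: reduce from (\ref{5.0}) to (\ref{5.1}) via the Lyapunov transformation, bound each mode vector by Theorem \ref{thm5.5}, apply Lemma \ref{lem5.6} for asymptotic decay, and read off stability/instability from the Jordan block sizes through Theorem \ref{thm5.4.1}. Parts (2) and (3) of your sketch match the paper's Cases 2 and 3 essentially verbatim.

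The one genuine slip is in the exponential-stability step of part (1). Your factorisation
\[
h_{k}(t,t_{0})\,e_{\operatorname{Re}_{\mu}\gamma_{i}}(t,t_{0})
=\bigl(h_{k}(t,t_{0})\,e_{\operatorname{Re}_{\mu}\gamma_{i}\ominus\lambda}(t,t_{0})\bigr)\,e_{\lambda}(t,t_{0})
\]
is algebraically correct, but with $\lambda>0$ the remaining factor $e_{\lambda}(t,t_{0})$ \emph{grows}, so the resulting bound $\left\Vert \chi_{i}(t)\right\Vert \leq\gamma\,e_{\lambda}(t,t_{0})$ does not give exponential decay. (The phrasing ``$\lambda$ small enough that $\operatorname{Re}_{\mu}\gamma_{i}\ominus\lambda$ still satisfies the infimum condition'' also signals the reversal: subtracting $\lambda$ via $\ominus$ makes $\operatorname{Re}_{\mu}\gamma_{i}$ \emph{more} negative, so no smallness is needed.) The paper does the split in the opposite direction, using the constant $\varepsilon$ supplied by hypothesis (\ref{cond eps}):
\[
h_{k}(t,t_{0})\,e_{\operatorname{Re}_{\mu}\gamma_{i}}(t,t_{0})
=\bigl(h_{k}(t,t_{0})\,e_{\operatorname{Re}_{\mu}\gamma_{i}\oplus\varepsilon}(t,t_{0})\bigr)\,e_{\ominus\varepsilon}(t,t_{0}),
\]
so that the parenthesised factor is bounded by Lemma \ref{lem5.6} (here one does need (\ref{cond eps}) to ensure $\operatorname{Re}_{\mu}\gamma_{i}\oplus\varepsilon$ still satisfies (\ref{infimum condtion})), while $e_{\ominus\varepsilon}(t,t_{0})$ is the genuinely decaying exponential. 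Swap your $\ominus\lambda$ and $\lambda$ and the argument goes through exactly as the paper's does.
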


\begin{proof}
Let $e_{R}\left(  t,t_{0}\right)  $ be the transition matrix of the system
(\ref{5.1}) and $R\left(  t\right)  $ be defined as in (\ref{5.0.1}). Given
the conventional eigenvalues $\left\{  \gamma_{i}\left(  t\right)  \right\}
_{i=1}^{n}$ of $R\left(  t\right)  $, we can define the set of dynamic
eigenpairs $\left\{  \gamma_{i}\left(  t\right)  ,w_{i}\left(  t\right)
\right\}  _{i=1}^{n}$ and from Theorem \ref{thm5.5}, the dynamic eigenvector
$w_{i}\left(  t\right)  $ satisfies (\ref{5.5}). Moreover, let us define
$W(t)$ as the following:%
\begin{equation}
W(t)=e_{R}\left(  t,\tau\right)  e_{\ominus\Xi}\left(  t,\tau\right)
\label{5.9.1}%
\end{equation}
and we have%
\begin{equation}
e_{R}\left(  t,\tau\right)  =W(t)e_{\Xi}\left(  t,\tau\right)  , \label{5.9.2}%
\end{equation}
where $\tau\in\mathbb{T}$ and $\Xi\left(  t\right)  $ is given as in Lemma
\ref{lem5.4}. Employing (\ref{5.9.2}), we can write that%
\begin{equation}
e_{R}\left(  \tau,t_{0}\right)  =e_{\Xi}\left(  \tau,t_{0}\right)
W^{-1}(t_{0}). \label{5.9.3}%
\end{equation}
By combining (\ref{5.9.2}) and (\ref{5.9.3}), the transition matrix of the
system (\ref{5.1}) can be represented by%
\begin{equation}
e_{R}\left(  t,t_{0}\right)  =W\left(  t\right)  e_{\Xi}\left(  t,t_{0}%
\right)  W^{-1}\left(  t_{0}\right)  , \label{5.10}%
\end{equation}
where $W\left(  t\right)  :=[w_{1}\left(  t\right)  ,w_{2}\left(  t\right)
,\ldots,w_{n}\left(  t\right)  ]$. Furthermore, we can denote the matrix
$W^{-1}\left(  t_{0}\right)  $ as follows:%
\[
W^{-1}\left(  t_{0}\right)  =\left[
\begin{array}
[c]{c}%
v_{1}^{T}\left(  t_{0}\right) \\
v_{2}^{T}\left(  t_{0}\right) \\
\vdots\\
v_{n}^{T}\left(  t_{0}\right)
\end{array}
\right]  .
\]

Since $\Xi\left(  t\right)  $ is a diagonal matrix, we can write (\ref{5.10})
as
\begin{equation}
e_{R}\left(  t,t_{0}\right)  =%
%TCIMACRO{\dsum \limits_{i=1}^{n}}%
%BeginExpansion
{\displaystyle\sum\limits_{i=1}^{n}}
%EndExpansion
e_{\gamma_{i}}\left(  t,t_{0}\right)  W\left(  t\right)  F_{i}W^{-1}\left(
t_{0}\right)  , \label{5.11}%
\end{equation}
where $F_{i}:=\delta_{i,j}$ is $n\times n$ matrix. Using $v_{i}^{T}\left(
t\right)  w_{j}\left(  t\right)  =\delta_{i,j}$ for all $t\in\mathbb{T}$, we
rewrite $F_{i}\ $\ as follows:%
\begin{equation}
F_{i}=W^{-1}\left(  t\right)  \left[  0,\ldots,0,w_{i}\left(  t\right)
,0,\ldots,0\right]  . \label{5.12}%
\end{equation}
By means of (\ref{5.11}) and (\ref{5.12}) we have%
\[
e_{R}\left(  t,t_{0}\right)  =%
%TCIMACRO{\dsum \limits_{i=1}^{n}}%
%BeginExpansion
{\displaystyle\sum\limits_{i=1}^{n}}
%EndExpansion
e_{\gamma_{i}}\left(  t,t_{0}\right)  w_{i}\left(  t\right)  v_{i}^{T}\left(
t_{0}\right)  =%
%TCIMACRO{\dsum \limits_{i=1}^{n}}%
%BeginExpansion
{\displaystyle\sum\limits_{i=1}^{n}}
%EndExpansion
\chi_{i}\left(  t\right)  v_{i}^{T}\left(  t_{0}\right)  ,
\]
where $\chi_{i}\left(  t\right)  $ is mode vector of system (\ref{5.1}).

\begin{case}
\label{case1}By (\ref{5.3}), for each $1\leq i\leq n,$ we can write that%
\begin{align*}
\left\Vert \chi_{i}\left(  t\right)  \right\Vert  &  \leq D_{i}%
%TCIMACRO{\dsum \limits_{k=0}^{d_{i}-1}}%
%BeginExpansion
{\displaystyle\sum\limits_{k=0}^{d_{i}-1}}
%EndExpansion
h_{k}\left(  t,t_{0}\right)  \left\vert e_{\gamma_{i}}\left(  t,t_{0}\right)
\right\vert \\
&  \leq D_{i}%
%TCIMACRO{\dsum \limits_{k=0}^{d_{i}-1}}%
%BeginExpansion
{\displaystyle\sum\limits_{k=0}^{d_{i}-1}}
%EndExpansion
h_{k}\left(  t,t_{0}\right)  e_{\operatorname{Re}_{\mu}(_{\gamma_{i}})}\left(
t,t_{0}\right)
\end{align*}
where $D_{i}$ is as in Theorem \ref{thm5.5}, $d_{i}$ represents the dimension
of the Jordan block which contains $i^{th}$ eigenvalue of $R\left(  t\right)
$.Using Lemma \ref{lem5.6} we get%
\[
\lim_{t\rightarrow\infty}h_{k}\left(  t,t_{0}\right)  e_{\operatorname{Re}%
_{\mu}(_{\gamma_{i}})}\left(  t,t_{0}\right)  =0
\]
for each $1\leq i\leq n$ and all $k=1,2,...,d_{i}-1$. This along with Theorem
\ref{thm5.4.1} implies that (\ref{5.1}) is asymptotically stable. By Theorem
\ref{thm2}, since the solutions of (\ref{5.0}) and (\ref{5.1}) are related by
Lyapunov transformation, we can state that solution of (\ref{5.0}) is
asymptotically stable. For the second part, we first write%
\begin{align}
\left\Vert \chi_{i}\left(  t\right)  \right\Vert  &  \leq D_{i}%
%TCIMACRO{\dsum \limits_{k=0}^{d_{i}-1}}%
%BeginExpansion
{\displaystyle\sum\limits_{k=0}^{d_{i}-1}}
%EndExpansion
h_{k}\left(  t,t_{0}\right)  \left\vert e_{\gamma_{i}}\left(  t,t_{0}\right)
\right\vert \nonumber\\
&  \leq D_{i}%
%TCIMACRO{\dsum \limits_{k=0}^{d_{i}-1}}%
%BeginExpansion
{\displaystyle\sum\limits_{k=0}^{d_{i}-1}}
%EndExpansion
h_{k}\left(  t,t_{0}\right)  e_{\operatorname{Re}_{\mu}(_{\gamma_{i}}%
)\oplus\varepsilon}\left(  t,t_{0}\right)  e_{\ominus\varepsilon}\left(
t,t_{0}\right)  . \label{5.13}%
\end{align}
If (\ref{cond eps}) holds, then $\operatorname{Re}_{\mu}\left(  \gamma
_{i}\oplus\varepsilon\right)  $ satisfies (\ref{infimum condtion}). Hence, by
Lemma \ref{lem5.6} the term $h_{k}\left(  t,t_{0}\right)  e_{\operatorname{Re}%
_{\mu}(_{\gamma_{i}})\oplus\varepsilon}\left(  t,t_{0}\right)  $ converges to
zero as $t\rightarrow\infty$. That is, there is an upper bound $C_{\varepsilon
}$ for the sum $%
%TCIMACRO{\tsum \limits_{k=0}^{d_{i}-1}}%
%BeginExpansion
{\textstyle\sum\limits_{k=0}^{d_{i}-1}}
%EndExpansion
h_{k}\left(  t,t_{0}\right)  e_{\operatorname{Re}_{\mu}(_{\gamma_{i}}%
)\oplus\varepsilon}\left(  t,t_{0}\right)  $. This along with (\ref{5.13})
yields
\[
\left\Vert \chi_{i}\left(  t\right)  \right\Vert \leq D_{i}C_{\varepsilon
}e_{\ominus\varepsilon}\left(  t,t_{0}\right)  .
\]
Thus,Theorem \ref{thm5.4.1} implies that (\ref{5.1}) is exponentially stable.
Using the above given argument (\ref{5.0}) is exponentially stable.
\end{case}

\begin{case}
\label{case2}Assume that $\operatorname{Re}_{\mu}\left[  \gamma_{k}\left(
t\right)  \right]  =0$ for some $1\leq k\leq n$ with equal algebraic and
geometric multiplicities corresponding to $\gamma_{k}(t).$ Then the Jordan
block of $\gamma_{k}(t)$ is $1\times1$ and this implies
\[
\chi_{k}\left(  t\right)  =\beta_{k}e_{\gamma_{k}}\left(  t,t_{0}\right)  .
\]
Thus,%
\begin{align*}
\lim_{t\rightarrow\infty}\left\Vert \chi_{k}\left(  t\right)  \right\Vert  &
\leq\lim_{t\rightarrow\infty}\beta_{k}\left\vert e_{\gamma_{k}}\left(
t,t_{0}\right)  \right\vert \\
&  \leq\lim_{t\rightarrow\infty}\beta_{k}e_{\operatorname{Re}_{\mu}%
(_{\gamma_{k}})}\left(  t,t_{0}\right) \\
&  =0.
\end{align*}
By Theorem \ref{thm5.4.1}, the system (\ref{5.1}) is stable. By Theorem
\ref{thm2}, the solutions of (\ref{5.0}) and (\ref{5.1}) are related by
Lyapunov transformation. This implies that the system (\ref{5.0}) is stable.
\end{case}

\begin{case}
\label{case3}Suppose that $\operatorname{Re}_{\mu}(\gamma_{i}\left(  t\right)
)>0$ for some $i=1,\ldots,n.$ Then, we have%
\[
\lim_{t\rightarrow\infty}\left\Vert e_{R}\left(  t,t_{0}\right)  \right\Vert
=\infty,
\]
and by the relationship between solutions of (\ref{5.0}) and (\ref{5.1}), we
can write that%
\[
\lim_{t\rightarrow\infty}\left\Vert \Phi_{A}\left(  t,t_{0}\right)
\right\Vert =\infty.
\]
Therefore, (\ref{5.0}) is unstable.
\end{case}
\end{proof}

\begin{remark}
In the case when the time scale is additive periodic, Theorem \ref{thm5.9}
gives its additive counterpart \cite[Theorem 7.9]{[3]}. For an additive time
scale the graininess function $\mu(t)$ is bounded above by the period of the
time scale. However, this is not true in general for the times scales that are
periodic in shifts. The highlight of Theorem \ref{thm5.9} is to rule out
strong restriction that obliges the time scale to be additive periodic. Hence,
unlike \cite[Theorem 7.9]{[3]} our stability theorem (i.e. Theorem
\ref{thm5.9} ) is valid for $q$-difference systems.
\end{remark}

We can state the following corollary as a consequence of Theorem \ref{thm5.9}.

\begin{corollary}
\label{cor5.10}Consider the $T$-periodic linear dynamic system (\ref{4.1});

\begin{enumerate}
\item If all Floquet multipliers have modulus less than $1$, then the system
(\ref{4.1}) is exponentially stable;

\item If all Floquet multipliers have modulus less than $1$ or equal to $1,$
and if, for each Floquet multiplier with modulus less than $1$, the algebraic
multiplicity equals to geometric multiplicity, then the system (\ref{4.1}) is
stable, otherwise the system (\ref{4.1}) is unstable, growing at rates of
generalized polynomials of $t;$

\item If at least one Floquet multiplier has modulus greater than $1$, then the
system (\ref{4.1}) is unstable.
\end{enumerate}
\end{corollary}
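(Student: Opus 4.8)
The plan is to derive everything from Theorem \ref{thm5.9}, so the only real work is to translate the three modulus conditions on the Floquet multipliers into the sign conditions on the Hilger real parts $\operatorname{Re}_{\mu}\gamma_{i}(t)$ that appear there. By Corollary \ref{rem4.4} the Floquet multipliers $\lambda_{1},\ldots,\lambda_{n}$ are exactly the eigenvalues of $M:=e_{R}(\delta_{+}^{T}(t_{0}),t_{0})=\Phi_{A}(\delta_{+}^{T}(t_{0}),t_{0})$, and by the computation in the proof of Lemma \ref{lem5.2} (together with Corollary \ref{cor1}) the corresponding eigenvalues of $R(t)$ are $\gamma_{i}(t)=\lim_{s\to t}\bigl(\lambda_{i}^{\frac{1}{T}\Lambda(t,s)}-1\bigr)/(\sigma(t)-s)$, where $\Lambda(t,s)=\Theta(\sigma(t))-\Theta(s)$. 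Hence I would first record the identity $1+\mu(t)\gamma_{i}(t)=\lim_{s\to t}\lambda_{i}^{\frac{1}{T}\Lambda(t,s)}$, which is the crucial bridge between the two spectra.

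From this identity the Hilger real part computes cleanly. Writing $c(t):=\lim_{s\to t}\Lambda(t,s)/(\sigma(t)-s)$ and $x:=\mu(t)c(t)/T$, one gets $\operatorname{Re}_{\mu}\gamma_{i}(t)=\bigl(|1+\mu(t)\gamma_{i}(t)|-1\bigr)/\mu(t)=(|\lambda_{i}|^{x}-1)/\mu(t)$ at right-scattered points, and $\operatorname{Re}\gamma_{i}(t)=\tfrac{1}{T}c(t)\log|\lambda_{i}|$ at right-dense points. In either case the weighted quantity appearing in Theorem \ref{thm5.9} reduces to
\[
-\Bigl(\lim_{s\to t}\tfrac{\Lambda(t,s)}{\sigma(t)-s}\Bigr)^{-1}\operatorname{Re}_{\mu}\gamma_{i}(t)=\frac{1-|\lambda_{i}|^{x}}{Tx},\qquad x\in[0,1],
\]
with the value at $x=0$ read off as the limit $-\tfrac{1}{T}\log|\lambda_{i}|$. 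The admissible range $x\in[0,1]$ comes from the bound $0\le \Theta(\sigma(t))-\Theta(t)\le T$ already used in the proof of Theorem \ref{thm5.5}. The point of this formula is that the sign of the left-hand side is controlled entirely by whether $|\lambda_{i}|$ is $<1$, $=1$, or $>1$.

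With the sign relation in hand the three parts follow by matching hypotheses. For part (3), if some $|\lambda_{i}|>1$ then $\operatorname{Re}_{\mu}\gamma_{i}(t)>0$ for every $t$, so Theorem \ref{thm5.9}(3) gives instability. For part (1), if every $|\lambda_{i}|<1$ then the map $f(x)=(1-|\lambda_{i}|^{x})/(Tx)$ extends continuously and strictly positively to the compact interval $[0,1]$, so its minimum there is a positive constant; since $x=x(t)$ always lands in $[0,1]$, the infimum condition (\ref{inf cond}) holds uniformly in $t$ and Theorem \ref{thm5.9}(1) yields asymptotic stability. For part (2), multipliers with $|\lambda_{i}|=1$ give $\operatorname{Re}_{\mu}\gamma_{i}\equiv 0$; the equality of algebraic and geometric multiplicities for such a multiplier of $M$ forces its Jordan blocks to be $1\times1$, which transfers to the matching blocks of $R(t)$ through the constant similarity $K(t)=S^{-1}R(t)S$ built in the proof of Theorem \ref{thm5.5}, so the hypothesis of Theorem \ref{thm5.9}(2) is met and stability (respectively instability, with polynomial growth) follows.

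The step I expect to be the main obstacle is the exponential half of part (1), i.e.\ verifying (\ref{cond eps}) that $-\operatorname{Re}_{\mu}\gamma_{i}(t)\ge\varepsilon$ uniformly. Since $-\operatorname{Re}_{\mu}\gamma_{i}(t)=c(t)f(x(t))$ and $c(t)$ need not be bounded below on a time scale with unbounded graininess (for $\overline{q^{\mathbb{Z}}}$ one has $c(t)=q/((q-1)t)\to 0$), the bound $x\in[0,1]$ controls $f$ but not $c(t)$ itself. I would therefore treat the asymptotic-stability conclusion as immediate from (\ref{inf cond}), and argue the exponential-stability conclusion separately --- either under the additional standing assumption that the graininess (equivalently $c(t)$) is bounded below, or by re-examining the decay of $e_{\gamma_{i}}(t,t_{0})$ directly through the period-by-period multiplicative factor $\lambda_{i}$, which is where the genuine content of the exponential claim resides.
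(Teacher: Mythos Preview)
The paper does not actually give a proof of this corollary; it is simply declared to be ``a consequence of Theorem \ref{thm5.9}'' and no argument is supplied. Your plan to reduce the three modulus conditions on the Floquet multipliers $\lambda_i$ to the corresponding sign conditions on $\operatorname{Re}_\mu\gamma_i(t)$ is therefore exactly the intended route, and your bridge identity $1+\mu(t)\gamma_i(t)=\lim_{s\to t}\lambda_i^{\frac{1}{T}\Lambda(t,s)}$ together with the formula $(1-|\lambda_i|^{x})/(Tx)$ is the right computation (it is implicit in the proof of Lemma \ref{lem5.2}). In that sense your proposal is not merely the same approach as the paper's --- it supplies the details the paper omits.

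The subtlety you flag in part (1) is genuine and worth recording: the paper's Theorem \ref{thm5.9}(1) requires both (\ref{inf cond}) and (\ref{cond eps}) for exponential stability, and on a time scale such as $\overline{q^{\mathbb{Z}}}$ one has $c(t)\to 0$, so (\ref{cond eps}) cannot be verified uniformly from $|\lambda_i|<1$ alone. The paper does not address this, so strictly speaking the exponential-stability conclusion of the corollary is not fully justified by the stated Theorem \ref{thm5.9} without an additional argument (e.g.\ bounded graininess, or a direct period-by-period estimate using $\|\Phi_A(\delta_+^{(k)}(T,t_0),t_0)\|=\|M^k\|$). Your instinct to isolate asymptotic stability from (\ref{inf cond}) and then treat the exponential upgrade separately is the correct way to close the gap.
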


Now, we can revisit our examples to make stability analysis:

\begin{example}
Let $\mathbb{T}=\overline{q^{\mathbb{Z}}},$ $q>1$ and consider the following
system%
\begin{align}
x^{\Delta}\left(  t\right)   &  =A(t)x\left(  t\right) \nonumber\\
&  =\left[
\begin{array}
[c]{cc}%
\frac{1}{t} & 0\\
0 & \frac{1}{t}%
\end{array}
\right]  x\left(  t\right)  . \label{last}%
\end{align}
As we did in Example \ref{examp1} we obtain $R\left(  t\right)  $ as follows:%
\[
R\left(  t\right)  =%
\begin{bmatrix}
\frac{1}{t} & 0\\
0 & \frac{1}{t}%
\end{bmatrix}
.
\]
Then $R\left(  t\right)  $ has eigenvalues $\gamma_{1,2}(t)=1/t$ and
\begin{align*}
\operatorname{Re}_{\mu}(\gamma_{1,2}\left(  t\right)  )  &  =\frac{\left\vert
\mu(t)\gamma_{1,2}\left(  t\right)  +1\right\vert -1}{\mu(t)}\\
&  =\frac{\left\vert (qt-t)\frac{1}{t}+1\right\vert -1}{qt-t}\\
&  =\frac{q-1}{qt-t}\\
&  =\frac{1}{t}>0.
\end{align*}
Thus, we can conclude by the preceding theorem that the system (\ref{last}) is unstable.
\end{example}

\end{document}